\numberwithin{equation}{section}
\numberwithin{figure}{section}
\theoremstyle{plain}
\newtheorem{thm}{\protect\theoremname}[section]
\theoremstyle{definition}
\newtheorem{defn}[thm]{\protect\definitionname}
\newtheorem{question}[thm]{\protect\questionname}
\theoremstyle{remark}
\newtheorem{rem}[thm]{\protect\remarkname}
\theoremstyle{definition}
\newtheorem{warning}[thm]{\protect\warningname}
\theoremstyle{plain}
\newtheorem{lem}[thm]{\protect\lemmaname}
\theoremstyle{definition}
\newtheorem{example}[thm]{\protect\examplename}
\theoremstyle{plain}
\newtheorem{prop}[thm]{\protect\propositionname}
\newtheorem{cor}[thm]{\protect\corollaryname}
\theoremstyle{definition}
\newtheorem{notation}[thm]{\protect\notationname}
\newtheorem{variant}[thm]{\protect\variantname}
\newtheorem{construction}[thm]{\protect\constructionname}
\providecommand{\constructionname}{Construction}
\providecommand{\corollaryname}{Corollary}
\providecommand{\definitionname}{Definition}
\providecommand{\examplename}{Example}
\providecommand{\lemmaname}{Lemma}
\providecommand{\notationname}{Notation}
\providecommand{\propositionname}{Proposition}
\providecommand{\questionname}{Question}
\providecommand{\remarkname}{Remark}
\providecommand{\theoremname}{Theorem}
\providecommand{\variantname}{Variant}
\providecommand{\warningname}{Warning}
\begin{document}
\global\long\def\sf#1{\mathsf{#1}}%

\global\long\def\scr#1{\mathscr{{#1}}}%

\global\long\def\cal#1{\mathcal{#1}}%

\global\long\def\bb#1{\mathbb{#1}}%

\global\long\def\bf#1{\mathbf{#1}}%

\global\long\def\frak#1{\mathfrak{#1}}%

\global\long\def\fr#1{\mathfrak{#1}}%

\global\long\def\u#1{\underline{#1}}%

\global\long\def\tild#1{\widetilde{#1}}%

\global\long\def\mrm#1{\mathrm{#1}}%

\global\long\def\pr#1{\left(#1\right)}%

\global\long\def\abs#1{\left|#1\right|}%

\global\long\def\inp#1{\left\langle #1\right\rangle }%

\global\long\def\br#1{\left\{  #1\right\}  }%

\global\long\def\norm#1{\left\Vert #1\right\Vert }%

\global\long\def\hat#1{\widehat{#1}}%

\global\long\def\opn#1{\operatorname{#1}}%

\global\long\def\bigmid{\,\middle|\,}%

\global\long\def\Top{\sf{Top}}%

\global\long\def\Set{\sf{Set}}%

\global\long\def\SS{\sf{sSet}}%

\global\long\def\Kan{\sf{Kan}}%

\global\long\def\Cat{\mathcal{C}\sf{at}}%

\global\long\def\Grpd{\mathcal{G}\sf{rpd}}%

\global\long\def\Res{\mathcal{R}\sf{es}}%

\global\long\def\imfld{\cal M\mathsf{fld}}%

\global\long\def\ids{\cal D\sf{isk}}%

\global\long\def\D{\sf{Disk}}%

\global\long\def\ich{\cal C\sf h}%

\global\long\def\SW{\mathcal{SW}}%

\global\long\def\SHC{\mathcal{SHC}}%

\global\long\def\Fib{\mathcal{F}\mathsf{ib}}%

\global\long\def\Bund{\mathcal{B}\mathsf{und}}%

\global\long\def\Fam{\cal F\sf{amOp}}%

\global\long\def\B{\sf B}%

\global\long\def\Spaces{\sf{Spaces}}%

\global\long\def\Mod{\sf{Mod}}%

\global\long\def\Nec{\sf{Nec}}%

\global\long\def\Fin{\sf{Fin}}%

\global\long\def\Ch{\sf{Ch}}%

\global\long\def\Ab{\sf{Ab}}%

\global\long\def\SA{\sf{sAb}}%

\global\long\def\P{\mathsf{POp}}%

\global\long\def\Op{\mathcal{O}\mathsf{p}}%

\global\long\def\Opg{\mathcal{O}\mathsf{p}^{\mathrm{gn}}_{\infty}}%

\global\long\def\Tup{\mathsf{Tup}}%

\global\long\def\H{\cal H}%

\global\long\def\Mfld{\cal M\mathsf{fld}}%

\global\long\def\Disk{\cal D\mathsf{isk}}%

\global\long\def\Acc{\mathcal{A}\mathsf{cc}}%

\global\long\def\Pr{\mathcal{P}\mathrm{\mathsf{r}}}%

\global\long\def\Del{\mathbf{\Delta}}%

\global\long\def\id{\operatorname{id}}%

\global\long\def\Aut{\operatorname{Aut}}%

\global\long\def\End{\operatorname{End}}%

\global\long\def\Hom{\operatorname{Hom}}%

\global\long\def\Ext{\operatorname{Ext}}%

\global\long\def\sk{\operatorname{sk}}%

\global\long\def\ihom{\underline{\operatorname{Hom}}}%

\global\long\def\N{\mathrm{N}}%

\global\long\def\-{\text{-}}%

\global\long\def\op{\mathrm{op}}%

\global\long\def\To{\Rightarrow}%

\global\long\def\rr{\rightrightarrows}%

\global\long\def\rl{\rightleftarrows}%

\global\long\def\mono{\rightarrowtail}%

\global\long\def\epi{\twoheadrightarrow}%

\global\long\def\comma{\downarrow}%

\global\long\def\ot{\leftarrow}%

\global\long\def\corr{\leftrightsquigarrow}%

\global\long\def\lim{\operatorname{lim}}%

\global\long\def\colim{\operatorname{colim}}%

\global\long\def\holim{\operatorname{holim}}%

\global\long\def\hocolim{\operatorname{hocolim}}%

\global\long\def\Ran{\operatorname{Ran}}%

\global\long\def\Lan{\operatorname{Lan}}%

\global\long\def\Sk{\operatorname{Sk}}%

\global\long\def\Sd{\operatorname{Sd}}%

\global\long\def\Ex{\operatorname{Ex}}%

\global\long\def\Cosk{\operatorname{Cosk}}%

\global\long\def\Sing{\operatorname{Sing}}%

\global\long\def\Sp{\operatorname{Sp}}%

\global\long\def\Spc{\operatorname{Spc}}%

\global\long\def\Fun{\operatorname{Fun}}%

\global\long\def\map{\operatorname{map}}%

\global\long\def\diag{\operatorname{diag}}%

\global\long\def\Gap{\operatorname{Gap}}%

\global\long\def\cc{\operatorname{cc}}%

\global\long\def\ob{\operatorname{ob}}%

\global\long\def\Map{\operatorname{Map}}%

\global\long\def\Rfib{\operatorname{RFib}}%

\global\long\def\Lfib{\operatorname{LFib}}%

\global\long\def\Tw{\operatorname{Tw}}%

\global\long\def\Equiv{\operatorname{Equiv}}%

\global\long\def\Arr{\operatorname{Arr}}%

\global\long\def\Cyl{\operatorname{Cyl}}%

\global\long\def\Path{\operatorname{Path}}%

\global\long\def\Alg{\operatorname{Alg}}%

\global\long\def\ho{\operatorname{ho}}%

\global\long\def\Comm{\operatorname{Comm}}%

\global\long\def\Triv{\operatorname{Triv}}%

\global\long\def\triv{\operatorname{triv}}%

\global\long\def\Env{\operatorname{Env}}%

\global\long\def\Act{\operatorname{Act}}%

\global\long\def\loc{\operatorname{loc}}%

\global\long\def\Assem{\operatorname{Assem}}%

\global\long\def\Nat{\operatorname{Nat}}%

\global\long\def\Conf{\operatorname{Conf}}%

\global\long\def\Rect{\operatorname{Rect}}%

\global\long\def\Emb{\operatorname{Emb}}%

\global\long\def\Homeo{\operatorname{Homeo}}%

\global\long\def\mor{\operatorname{mor}}%

\global\long\def\Germ{\operatorname{Germ}}%

\global\long\def\Post{\operatorname{Post}}%

\global\long\def\Sub{\operatorname{Sub}}%

\global\long\def\Shv{\operatorname{Shv}}%

\global\long\def\Cov{\operatorname{Cov}}%

\global\long\def\Disc{\operatorname{Disc}}%

\global\long\def\Open{\operatorname{Open}}%

\global\long\def\Disj{\operatorname{Disj}}%

\global\long\def\Exc{\operatorname{Exc}}%

\global\long\def\Homog{\operatorname{Homog}}%

\global\long\def\PL{\mathrm{PL}}%

\global\long\def\sm{\mathrm{sm}}%

\global\long\def\istp{\mathrm{istp}}%

\global\long\def\exh{\mathrm{exh}}%

\global\long\def\disj{\mathrm{disj}}%

\global\long\def\Top{\mathrm{Top}}%

\global\long\def\lax{\mathrm{lax}}%

\global\long\def\weq{\mathrm{weq}}%

\global\long\def\fib{\mathrm{fib}}%

\global\long\def\fin{\mathrm{fin}}%

\global\long\def\inert{\mathrm{inert}}%

\global\long\def\act{\mathrm{act}}%

\global\long\def\cof{\mathrm{cof}}%

\global\long\def\inj{\mathrm{inj}}%

\global\long\def\univ{\mathrm{univ}}%

\global\long\def\Ker{\opn{Ker}}%

\global\long\def\Coker{\opn{Coker}}%

\global\long\def\Im{\opn{Im}}%

\global\long\def\Coim{\opn{Im}}%

\global\long\def\coker{\opn{coker}}%

\global\long\def\im{\opn{\mathrm{im}}}%

\global\long\def\coim{\opn{coim}}%

\global\long\def\gn{\mathrm{gn}}%

\global\long\def\Mon{\opn{Mon}}%

\global\long\def\Un{\opn{Un}}%

\global\long\def\St{\opn{St}}%

\global\long\def\cun{\widetilde{\opn{Un}}}%

\global\long\def\cst{\widetilde{\opn{St}}}%

\global\long\def\Sym{\operatorname{Sym}}%

\global\long\def\CA{\operatorname{CAlg}}%

\global\long\def\Int{\operatorname{Int}}%

\global\long\def\SP{\operatorname{SP}}%

\global\long\def\Nbd{\operatorname{Nbd}}%

\global\long\def\rd{\mathrm{rd}}%

\global\long\def\xmono#1#2{\stackrel[#2]{#1}{\rightarrowtail}}%

\global\long\def\xepi#1#2{\stackrel[#2]{#1}{\twoheadrightarrow}}%

\global\long\def\adj{\stackrel[\longleftarrow]{\longrightarrow}{\bot}}%

\global\long\def\btimes{\boxtimes}%

\global\long\def\ps#1#2{\prescript{}{#1}{#2}}%

\global\long\def\ups#1#2{\prescript{#1}{}{#2}}%

\global\long\def\hofib{\mathrm{hofib}}%

\global\long\def\cofib{\mathrm{cofib}}%

\global\long\def\Vee{\bigvee}%

\global\long\def\w{\wedge}%

\global\long\def\t{\otimes}%

\global\long\def\bp{\boxplus}%

\global\long\def\rcone{\triangleright}%

\global\long\def\lcone{\triangleleft}%

\global\long\def\S{\mathsection}%

\global\long\def\p{\prime}%

\global\long\def\pp{\prime\prime}%

\global\long\def\W{\overline{W}}%

\global\long\def\o#1{\overline{#1}}%

\global\long\def\fp{\overrightarrow{\times}}%

\title{A Context for Manifold Calculus}
\begin{abstract}
We develop Weiss's manifold calculus in the setting of $\infty$-categories,
where we allow the target $\infty$-category to be any $\infty$-category
with small limits. We will establish the connection between polynomial
functors, Kan extensions, and Weiss sheaves, and will classify homogeneous
functors. We will also generalize Weiss and Boavida de Brito's theorem
to functors taking values in arbitrary $\infty$-categories with small
limits.
\end{abstract}

\author{Kensuke Arakawa}
\email{arakawa.kensuke.22c@st.kyoto-u.ac.jp}
\address{Department of Mathematics, Kyoto University, Kyoto, 606-8502, Japan}
\subjclass[2020]{55U35, 57R40, 18F50}

\maketitle
\tableofcontents{}

\section*{Introduction}

Let $M$ be a smooth $n$-manifold, and let $\Open\pr M$ be the poset
of open sets of $M$. The idea of Weiss's \textit{manifold calculus}
is to study space-valued presheaves $F:\Open\pr M^{\op}\to\sf{Spaces}$
on $M$ by constructing a tower of presheaves
\[
F\to\cdots\to T_{k}F\to\cdots\to T_{1}F\to T_{0}F.
\]
Each $T_{k}F$ is the best ``polynomial functor'' of degree $\leq k$
that approximates $F$. Borrowing an analogy from calculus, the tower
is called the \textbf{Taylor tower }of $F$ and the functors $T_{k}F$
the \textbf{polynomial approximations} of $F$. 

The Taylor tower is determined by the definition of polynomials, so
we must make this explicit. There is a heuristic in homotopy theory
that homology behaves like linear (or affine) functions. A homology
theory is more or less characterized by the excision axiom, so we
make the following definition:
\begin{defn}
\cite[Definition 2.2]{Weiss99} We say that $F$ is\textbf{ polynomial
of degree $\leq k$}, or \textbf{$k$-excisive}, if for each open
set $U\subset M$ and pairwise disjoint closed subsets $A_{0},\dots,A_{k}$
of $U$, the map
\[
F\pr U\to\underset{\emptyset\subsetneq S\subset\{0,\dots,k\}}{\holim}F\pr{U\setminus\bigcup_{i\in S}A_{i}}
\]
is a homotopy equivalence.
\end{defn}

For example, $1$-excisivity is equivalent to the following condition:
For every open set $U\subset M$ and every pair of disjoint closed
sets $A_{0},A_{1}\subset U$, the square 
\[\begin{tikzcd}
	{F(U)} & {F(U\setminus A_1)} \\
	{F(U\setminus A_0)} & {F(U\setminus (A_0\cup A_1))}
	\arrow[from=1-1, to=1-2]
	\arrow[from=1-2, to=2-2]
	\arrow[from=2-1, to=2-2]
	\arrow[from=1-1, to=2-1]
\end{tikzcd}\]is homotopy cartesian. This corresponds to the conventional usage
of the term ``excisivity,'' hence justifying our terminology. 

We also want to ensure that $F$ interacts well with the smooth structure
on $M$, and also that $F$ is ``continuous.'' We thus make the
following definition:
\begin{defn}
\cite[$\S$1]{Weiss99} A space-valued presheaf $F$ on $M$ is said
to be \textbf{good} if it satisfies the following conditions:
\begin{itemize}
\item Let $U\subset V$ be an inclusion of open sets of $M$ which is a
smooth isotopy equivalence. Then the map $F\pr V\to F\pr U$ is a
homotopy equivalence.
\item Let $U_{0}\subset U_{1}\subset\cdots$ be an increasing sequence of
open subsets of $M$. Then the map $F(\bigcup_{i\geq0}U_{i})\to\holim_{i}F\pr{U_{i}}$
is a homotopy equivalence.
\end{itemize}
\end{defn}

With these definitions, we can now define polynomial approximations.
\begin{defn}
Let $F,G:\Open\pr M^{\op}\to\sf{Spaces}$ be functors that carry smooth
isotopy equivalences to homotopy equivalences. A natural transformation
$\alpha:F\to G$ is said to exhibit $G$ as a \textbf{$k$th polynomial
approximation} of $F$ if $G$ is polynomial of degree $\leq k$ and
$\alpha$ is homotopically initial among the natural transformations
to good polynomial functors of degree $\leq k$. In this case, we
will write $G=T_{k}F$. 
\end{defn}

At this point, several questions come up naturally:
\begin{question}
\label{que:1}When does a good presheaf on $M$ admit polynomial approximations?
If it does, how do we construct the approximations?
\end{question}

\begin{question}
\label{que:2}Can we classify polynomial functors?
\end{question}

Suppose now that $F$ takes values in the category of pointed spaces
and that we have constructed its polynomial approximations. In good
cases, the analysis of $F$ reduces, up to extension problems, to
studying the fibers $L_{k}F=\hofib\pr{T_{k}F\to T_{k-1}F}$. The functor
$L_{k}F$ has the special property that it is \textbf{homogeneous}:
It is polynomial of degree $\leq k$, and its $\pr{k-1}$th polynomial
approximation vanishes. This leads to the following question:
\begin{question}
\label{que:3}Can we classify homogeneous functors?
\end{question}

In \cite{Weiss99}, Weiss gave complete answers to these questions.
We now review his answers as two separate theorems (Theorems \ref{thm:Weiss1}
and \ref{thm:Weiss2}).

The first theorem answers Questions \ref{que:1} and \ref{que:2}.
We will let $\Disj^{\leq k}_{{\rm sm}}\pr M$ denote the subset of
$\Open\pr M$ consisting of the elements that are diffeomorphic to
$\bb R^{n}\times S$, where $S$ is a finite set of cardinality at
most $k$.
\begin{thm}
\cite[Lemma 3.8, Theorem 4.1, Theorem 5.1]{Weiss99}\label{thm:Weiss1}
Let $G:\Disj^{\leq k}_{{\rm sm}}\pr M^{\op}\to\Spaces$ be a functor
carrying isotopy equivalences to homotopy equivalences. The homotopy
right Kan extension of $G$ along the inclusion $\Disj^{\leq k}_{{\rm sm}}\pr M^{\op}\hookrightarrow\Open\pr M^{\op}$
is good and $k$-excisive. Moreover, every good polynomial functor
of degree $\leq k$ arises in this way.

In particular, every good space-valued presheaf $F$ on $M$ admits
polynomial approximations in all degrees, constructed by homotopy
right Kan extensions.
\end{thm}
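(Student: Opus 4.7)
The statement has two substantive parts: (i) $T_{k}G := \Ran_{i^{\op}}G$ is good and $k$-excisive, where $i\colon\Disj_{\sm}^{\leq k}\pr M\hookrightarrow\Open\pr M$ denotes the inclusion; and (ii) every good $k$-excisive presheaf $F$ is naturally equivalent to $T_{k}(F|_{\Disj_{\sm}^{\leq k}\pr M})$. Pointwise,
\[
T_{k}G(U) \;=\; \holim_{V\in\Disj_{\sm}^{\leq k}(U)^{\op}} G(V),
\]
where $\Disj_{\sm}^{\leq k}(U)$ denotes the subposet of those members contained in $U$. The ``in particular'' conclusion will follow formally from (i) and (ii).

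Part (i) is essentially formal. For goodness, isotopy invariance is obtained by using an ambient smooth isotopy to construct mutually cofinal functors between $\Disj_{\sm}^{\leq k}(U)$ and $\Disj_{\sm}^{\leq k}(V)$ whenever $U\hookrightarrow V$ is an isotopy equivalence; isotopy invariance of $G$ then promotes cofinality to an equivalence on homotopy limits. Sequential continuity for $U=\bigcup_{j}U_{j}$ is slightly subtler because a given $V\in\Disj_{\sm}^{\leq k}(U)$ need not lie in any single $U_{j}$; however, $V$ is isotopy equivalent to an arbitrarily small tubular neighborhood of a finite core of points, which does lie in some $U_{j}$, and a cofinality argument finishes the job. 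For $k$-excisivity, the decisive observation is pigeonhole: any $V\in\Disj_{\sm}^{\leq k}(U)$ has at most $k$ connected components, so given pairwise disjoint closed $A_{0},\dots,A_{k}\subset U$, $V$ must avoid at least one $A_{i}$ entirely. Hence the subposets $\Disj_{\sm}^{\leq k}(U\setminus\bigcup_{i\in S}A_{i})$ for $\emptyset\neq S\subset\{0,\dots,k\}$ jointly cover $\Disj_{\sm}^{\leq k}(U)$, and the combinatorics of this cover exactly realizes the punctured $(k+1)$-cube of the excisivity axiom, so descent for homotopy limits delivers the equivalence.

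Part (ii) is the main obstacle and accounts for the bulk of the work. Given good $k$-excisive $F$, I would show the unit $\eta_{U}\colon F(U)\to T_{k}(F|)(U)$ is an equivalence by a three-stage reduction. First, $\eta_{V}$ is tautologically an equivalence for $V\in\Disj_{\sm}^{\leq k}\pr M$, since such a $V$ is terminal in its own slice. Second, by the sequential continuity of both $F$ (goodness) and $T_{k}(F|)$ (from (i)), one reduces to open $U$ admitting a finite decomposition into pieces of $\Disj_{\sm}$. Third, induct on the number of pieces: writing $U=U'\cup W$ with $W\in\Disj_{\sm}$, the $k$-excisivity of $F$ exhibits $F(U)$ as a homotopy pullback of $F$ on $U'$, $W$, and their intersection, while the identical pullback description for $T_{k}(F|)(U)$ is obtained from the descent calculation of the previous paragraph. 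The delicate points are producing decompositions whose multi-intersections remain within $\Disj_{\sm}^{\leq k}$ and orchestrating the induction so that the two pullback squares match term by term; this is precisely where Weiss's argument requires its careful choice of good covers.

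The polynomial approximation conclusion is then formal. For a good presheaf $F$, set $T_{k}F := \Ran_{i^{\op}}(F|_{\Disj_{\sm}^{\leq k}\pr M})$; by (i) this is good and $k$-excisive, and the Kan extension unit yields a natural transformation $F\to T_{k}F$. Universality follows from (ii): any map $F\to H$ to a good $k$-excisive $H$ can be composed with the identification $H\simeq T_{k}(H|)$, and then the universal property of the right Kan extension, applied to the restricted transformation $F|\to H|$, produces an essentially unique factorization $T_{k}F\to H$ in the homotopy category.
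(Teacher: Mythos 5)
Your proposal contains two genuine gaps, both at the heart of the argument. The pigeonhole claim in the $k$-excisivity step is false: a single connected $V\in\Disj_{\sm}^{\leq k}(U)$ (an open ball, say) can perfectly well intersect all $k+1$ of the pairwise disjoint closed sets $A_0,\dots,A_k$, so the subposets $\Disj_{\sm}^{\leq k}(U\setminus\bigcup_{i\in S}A_i)$ do \emph{not} jointly cover $\Disj_{\sm}^{\leq k}(U)$. The correct pigeonhole is about finite point sets, not open sets: a subset $T\subset U$ of cardinality $\leq k$ can meet at most $k$ of the $A_i$, hence avoids some $A_j$. This is exactly the Weiss $k$-cover/Weiss $k$-condition (Definition~\ref{def:Weiss_condition}); the paper converts it into a finality statement over $\Disk_{n/U}^{\leq k}$ via oriented fiber products (Propositions~\ref{prop:colimit_criterion} and~\ref{prop:fiberwise_finality}, Lemma~\ref{lem:final_slice_mfd}), and the phrase ``descent for homotopy limits'' in your sketch is not a substitute for that argument.

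Second, the inductive step in your part (ii) writes $U=U'\cup W$ with $W\in\Disj_{\sm}$ and invokes a homotopy pullback of $F$ over $U'$, $W$, and $U'\cap W$. That two-element Mayer--Vietoris square is $1$-excisivity, which a $k$-excisive functor for $k\geq 2$ need not satisfy, so the induction never gets started. The working induction (Weiss's original, and Lemma~\ref{lem:MV} in this paper) proceeds along handle decompositions and hinges on the Morse-theoretic Lemma~\ref{lem:removing_disks}: removing $k+1$ pairwise disjoint cocore thickenings from a single handle of index $\lambda$ produces, in every proper face of the associated $(k+1)$-cube, a manifold of strictly smaller handle type, so $k$-excisivity can be applied and the induction closes. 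Adding one chart at a time cannot produce such a cube. One last remark: the paper does not reprove Theorem~\ref{thm:Weiss1} in this form but deduces it from the $\infty$-categorical generalization (Theorem~\ref{thm:best_approx_exists}), whose proof leans on the Ayala--Francis localization (Theorem~\ref{thm:localizing_wrt_istpy}) precisely to sidestep the point-set manipulations that your sketch runs into.
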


And the second theorem addresses Question \ref{que:3}:
\begin{thm}
\cite[Theorem 8.5]{Weiss99}\label{thm:Weiss2} Homogeneous functors
of degree $k$ taking values in pointed spaces admit a classification
in terms of fibrations over $B_{k}\pr M$ equipped with sections near
the ``fat diagonal'', where $B_{k}\pr M$ denotes the space of unordered
configurations of $k$ points in $M$. 
\end{thm}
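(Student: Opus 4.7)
The plan is to reduce the classification to an analysis of the bottom stratum of $\Disj_\sm^{\leq k}(M)$ using Theorem \ref{thm:Weiss1}, and then to identify this stratum, after localization at smooth isotopy equivalences, with the unordered configuration space $B_k(M)$.

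By Theorem \ref{thm:Weiss1}, a good polynomial functor $H$ of degree $\leq k$ is the homotopy right Kan extension of its restriction to $\Disj_\sm^{\leq k}(M)^\op$, so $H$ is determined by this restriction. I would then translate the homogeneity condition $T_{k-1}H \simeq *$ into a vanishing statement: since $T_{k-1}H$ is itself the homotopy right Kan extension of $H|_{\Disj_\sm^{\leq k-1}(M)^\op}$, and right Kan extensions restrict to themselves on the source, the contractibility of $T_{k-1}H$ forces $H$ to take the value of the basepoint on every open set with fewer than $k$ disk components. Consequently, a homogeneous functor of degree $k$ is the same data as a pointed, isotopy-invariant functor on the bottom stratum $\Disj_\sm^{=k}(M)^\op$, extended by the basepoint on the lower strata and then right Kan extended to all of $\Open(M)^\op$.

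The final step is to identify such functors with fibrations over $B_k(M)$ equipped with a section. I would prove that the $\infty$-categorical localization of the poset $\Disj_\sm^{=k}(M)$ at smooth isotopy equivalences is equivalent to the fundamental $\infty$-groupoid of $B_k(M)$: pass first to the ordered version, fiber it over $\Conf_k(M)$ with fibers given by the contractible space of compatible germs of disk neighborhoods (using smooth isotopy extension and the contractibility of the space of embeddings $\bb R^n \hookrightarrow M$ centered at a given point), and descend by the free $\Sigma_k$-action. A pointed-space-valued functor on $B_k(M)$ then corresponds, via straightening/unstraightening and the Grothendieck construction, to a Kan fibration over $B_k(M)$ together with a section picking out the basepoint fiberwise. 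The main obstacle I foresee is this last localization step, which is the geometric core of the theorem; the rest amounts to bookkeeping with Kan extensions, the reduction via the pointed structure, and the straightening equivalence.
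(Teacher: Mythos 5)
Your overall strategy (reduce to a ``bottom stratum'' via the Kan-extension equivalence, then identify its classifying space with $B_k(M)$) is the same as the paper's. But there is a genuine gap in the middle step, and it is precisely the point the paper has to work hardest on. You claim that a homogeneous functor of degree $k$ is ``the same data as a pointed, isotopy-invariant functor on the bottom stratum $\Disj_\sm^{=k}(M)^\op$.'' This is not correct as stated: the poset $\Disj_\sm^{=k}(M)$ contains inclusions $U\subset V$ in which $\pi_0(U)\to\pi_0(V)$ fails to be injective, and these factor through open sets with $<k$ components. A restriction of a homogeneous functor must send all such morphisms to zero maps, so not every pointed isotopy-invariant functor on $\Disj_\sm^{=k}(M)$ extends. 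The correct classifying category is the \emph{non-full} subcategory $I_M^{=k}$ spanned only by those inclusions inducing bijections on components (equivalently, by the (smooth) isotopy equivalences between objects with exactly $k$ components) — and on this subcategory ``isotopy-invariant'' is automatic, since every morphism is an isotopy equivalence.

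Even after fixing the subcategory, the ``extend by the basepoint on lower strata'' step is not mere bookkeeping in the $\infty$-categorical generality the paper works in. As the paper points out at the start of Subsection~\ref{subsec:zero_ext}, neither $\Fun(\cal C_1,\cal D)$ nor even the essential image of the restriction functor gives the right answer in general: there can be nontrivial homotopies between zero maps, so the restriction functor $\Fun'(\cal C,\cal D)\to\Fun(\cal C_1,\cal D)$ need not be fully faithful or essentially surjective. Proposition~\ref{prop:zero_extension} (the ``isolated subcategory'' trivial-fibration result) is exactly the technical input required to make the reduction rigorous, and your proposal does not recognize the need for it. Relatedly, your localization step asks for the localization of the full poset $\Disj_\sm^{=k}(M)$ at isotopy equivalences, which is \emph{not} an $\infty$-groupoid (the non-isolated inclusions remain as non-invertible morphisms) and hence is not $\Sing B_k(M)$; the paper instead computes the weak homotopy type of $N(I_M^{=k})$ (Proposition~\ref{prop:I_M=00003Dk_htpytype}), where every morphism is already an equivalence. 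The geometric intuition in your last paragraph — fibering the ordered configuration of germs over $\Conf(k,M)$ and quotienting by $\Sigma_k$ — is sound and close to what happens under the hood, but it must be applied to $I_M^{=k}$, and the zero-extension step it presupposes is the real content you have skipped.
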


Most of the things (perhaps except for Theorem \ref{thm:Weiss2})
we have discussed so far make sense for presheaves taking values in
categories other than that of spaces. Moreover, it seems quite natural
to consider this generalization. For example, if one is interested
in (co)homology, then it is not hard to imagine that presheaves of
(co)chain complexes or spectra would be very useful. As such, numerous
studies consider such generalized manifold calculus; see \cite{Wei04,RW14,HK23,TS_poly,TS_homog}
for instance. In this paper, we will answer Questions \ref{que:1},
\ref{que:2}, and \ref{que:3} in the generalized setting, and aim
to set a foundation of generalized manifold calculus with arbitrary
targets.

Here is what we will do. We will generalize Theorems \ref{thm:Weiss1}
and \ref{thm:Weiss2} to presheaves on $M$ taking values in an arbitrary
$\infty$-category $\cal C$ with small limits. It is straightforward
to state a generalization of Theorem \ref{thm:Weiss1}: The definitions
of good functors, polynomial functors, polynomial approximations,
and homogeneous functors directly carry over to $\cal C$-valued presheaves;
we just have to replace homotopy limits by $\infty$-categorical limits.
So a generalization of Theorem \ref{thm:Weiss1} will be as follows.
\begin{thm}
[Theorem \ref{thm:best_approx_exists}]\label{thm:main1}The right
Kan extension functor restricts to a categorical equivalence
\[
\Fun_{{\rm istp}}\pr{\Disj^{\leq k}_{\sm}\pr M^{\op},\cal C}\xrightarrow{\simeq}\Exc^{k}_{{\rm good}}\pr{M^{\op};\cal C},
\]
where:
\begin{itemize}
\item $\Fun_{{\rm istp}}\pr{\Disj^{\leq k}_{\sm}\pr M^{\op},\cal C}$ denotes
the $\infty$-category of functors $\Disj^{\leq k}_{\sm}\pr M\to\cal C$
carrying smooth isotopy equivalences to equivalences; and
\item $\Exc^{k}_{{\rm good}}\pr{M^{\op};\cal C}$ denotes the $\infty$-category
of $k$-excisive good functors $\Open\pr M^{\op}\to\cal C$.
\end{itemize}
In particular, every good functor $F:\Open\pr M^{\op}\to\cal C$ admits
a $k$th polynomial approximation, which is the right Kan extension
of $F\vert\Disj^{\leq k}_{\sm}\pr M^{\op}$.
\end{thm}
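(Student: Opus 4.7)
The approach is to exploit the restriction--Kan extension adjunction
\[
\iota^{*}\;:\;\Fun\bigl(N(\Open(M))^{\op},\cal C\bigr)\;\rl\; \Fun\bigl(N(\Disj_{\sm}^{\leq k}(M))^{\op},\cal C\bigr)\;:\;\Ran_{\iota}
\]
induced by the inclusion $\iota$. Because $\iota$ is fully faithful, the counit $\iota^{*}\Ran_{\iota}\To \id$ is a natural equivalence, so $\Ran_{\iota}$ is fully faithful and it suffices to identify its essential image with $\Exc_{\rm good}^{k}(M^{\op};\cal C)$. I will establish two statements: (a) if $G$ is isotopy-invariant, then $\Ran_{\iota}G$ is good and $k$-excisive; and (b) if $F$ is good and $k$-excisive, then the unit $F\to \Ran_{\iota}\iota^{*}F$ is an equivalence. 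Together with the immediate observation that $\iota^{*}$ sends good functors to isotopy-invariant ones, these give the equivalence of the theorem.

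For (a), write $\Disj_{\sm}^{\leq k}(U)=\{V\in \Disj_{\sm}^{\leq k}(M):V\subseteq U\}$ and use the pointwise formula
\[
(\Ran_{\iota}G)(U)\;\simeq\; \lim_{V\in \Disj_{\sm}^{\leq k}(U)} G(V).
\]
The $k$-excisiveness of $\Ran_{\iota}G$ follows from a pigeonhole principle: given pairwise disjoint closed $A_{0},\dots,A_{k}\subseteq U$, any $V\in \Disj_{\sm}^{\leq k}(U)$ has at most $k$ components and so avoids some $A_{i}$; hence $\Disj_{\sm}^{\leq k}(U)=\bigcup_{i}\Disj_{\sm}^{\leq k}(U\setminus A_{i})$, and rewriting the total limit along this union presents it as the punctured-cubical homotopy limit computing $k$-excisiveness. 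Goodness splits into two parts: invariance under a smooth isotopy equivalence $U\subseteq V$ follows from cofinality of $\Disj_{\sm}^{\leq k}(U)\hookrightarrow \Disj_{\sm}^{\leq k}(V)$, using isotopy extension to push any disjoint-disk configuration of $V$ into $U$; compatibility with increasing unions $U_{0}\subseteq U_{1}\subseteq\cdots$ holds because every (necessarily relatively compact) disjoint-disk configuration in $\bigcup U_{i}$ lies in some $U_{i}$.

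For (b), the unit at $U$ is the comparison $F(U)\to \lim_{V\in \Disj_{\sm}^{\leq k}(U)} F(V)$. When $U\in \Disj_{\sm}^{\leq k}(M)$, fully-faithfulness of $\iota$ makes the right-hand side equivalent to $F(U)$ and the unit equivalent to the identity. For general $U$, I proceed by induction on a handle-theoretic measure of complexity: given a handle decomposition, peel off a top-index handle to write $U=U'\cup H$ and apply $k$-excisiveness of $F$ to $k+1$ pairwise disjoint closed subsets of $U$ built from $H$ and its core, expressing $F(U)$ as the punctured-cubical homotopy limit of $F$ on opens of strictly smaller complexity. Parallel manipulations on the Kan extension side, afforded by the pointwise formula in (a), then yield the inductive step. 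Opens of infinite handle type are treated by exhausting with relatively compact subopens of finite handle type and invoking goodness of both $F$ and $\Ran_{\iota}\iota^{*}F$.

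The main obstacle is the inductive step in (b), which is the $\infty$-categorical incarnation of Weiss's good-cover theorem. Executing it requires verifying that various subposets of $\Disj_{\sm}^{\leq k}(U)$ are cofinal, via Joyal's criterion, which in turn reduces to the contractibility of certain slices built from differential-topological input (isotopy extension, handle decompositions, exhaustions by relatively compact opens). Once these cofinality statements are phrased as purely diagrammatic assertions about the indexing posets, they transfer directly to an arbitrary target $\infty$-category $\cal C$ with small limits, giving the desired generalization.
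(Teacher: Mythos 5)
Your macro-structure matches the paper's: exploit full faithfulness of the Kan extension to reduce to identifying the essential image, then prove (a) that right Kan extensions of isotopy functors land in $\Exc_{\rm good}^{k}$ and (b) that every good $k$-excisive functor is a Kan extension, the latter via a handle-theoretic Mayer--Vietoris induction (which is indeed close to the paper's Lemma~\ref{lem:MV}/\ref{lem:removing_disks}). However, there are genuine gaps in (a), and the proposal is silent on the paper's central technical tool.

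First, the ``pigeonhole'' argument for $k$-excisiveness is incorrect as stated. Having at most $k$ components does \emph{not} imply that $V\in\Disj_{\sm}^{\leq k}(U)$ avoids one of the $k+1$ pairwise disjoint closed sets $A_{0},\dots,A_{k}$: a single connected component of $V$ can intersect several (or all) of the $A_{i}$. So the claimed identity $\Disj_{\sm}^{\leq k}(U)=\bigcup_{i}\Disj_{\sm}^{\leq k}(U\setminus A_{i})$ fails, and even a cofinality-type salvage is not a formal computation but requires nontrivial input. Second, the claim that isotopy invariance of $\Ran_{\iota}G$ ``follows from cofinality of $\Disj_{\sm}^{\leq k}(U)\hookrightarrow\Disj_{\sm}^{\leq k}(V)$, using isotopy extension to push any disjoint-disk configuration of $V$ into $U$'' conflates two things: isotopy extension gives you a map, but finality (in Joyal's sense) requires that the slice posets $\{W'\in\Disj_{\sm}^{\leq k}(U):W'\subseteq W\}$ be weakly contractible, which is not a consequence of isotopy extension alone. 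Finally, the closing remark that once the cofinality statements ``are phrased as purely diagrammatic assertions \dots they transfer directly'' is misleading: the assertions themselves are not purely combinatorial and must be \emph{proved}, and this is exactly where the nontrivial homotopy theory lives.

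The missing ingredient is what the paper flags as its key insight: the Ayala--Francis localization theorem (Theorem~\ref{thm:localizing_wrt_istpy} and Remark~\ref{rem:loc_sm}), asserting that $\sf{Disk}_{\sm,n/M}^{\leq k}\to\Disk_{\sm,n/M}^{\leq k}$ is a Dwyer--Kan localization at smooth isotopy equivalences. Both the isotopy invariance of the Kan extension (Lemma~\ref{lem:istp_Kan} via Lemma~\ref{lem:isotopy_functors_are_oo-functors}) and $k$-excisiveness of the Kan extension (the implication $(1)\Rightarrow(2)$ of Theorem~\ref{thm:Weiss_k-cosheaf}, through Lemma~\ref{lem:final_slice_mfd} and the germ/configuration space machinery plus~\cite[Theorem~A.3.1]{HA}) are deduced from this theorem. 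The paper explicitly notes that Weiss's original argument for these steps relies on ad hoc constructions of spaces and does not carry over to an arbitrary $\cal C$ with small limits; your proposal, by effectively reproducing Weiss's cofinality intuition without the localization theorem, inherits the same obstruction.
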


To generalize Theorem \ref{thm:Weiss2}, recall that a fibration over
$B_{k}\pr M$ is equivalent (via the $\infty$-categorical Grothendieck
construction \cite[2.2.1.2]{HTT}) to a functor $\Sing B_{k}\pr M\to\cal S$,
where $\cal S$ denotes the $\infty$-category of spaces. With this
in mind, we generalize Theorem \ref{thm:Weiss2} as follows:
\begin{thm}
[Corollary \ref{cor:homog_class}, Theorem \ref{thm:inv}]\label{thm:main2}Suppose
that $\cal C$ is pointed. There is an equivalence of $\infty$-categories
\[
\Fun\pr{\Sing B_{k}\pr M,\cal C}\simeq\opn{Homog}^{k}_{{\rm good}}\pr{M^{\op};\cal C},
\]
where $\opn{Homog}^{k}_{{\rm good}}\pr{M^{\op};\cal C}$ denotes the
$\infty$-category of good functors $\Open\pr M^{\op}\to\cal C$ that
are of homogeneous of degree $k$. 
\end{thm}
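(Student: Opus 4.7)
The plan is to leverage Theorem~\ref{thm:main1} to reduce the classification problem to a question about functors on $N(\Disj_{\sm}^{\leq k}(M))^{\op}$, then to identify the relevant subcategory with $\Fun(\Sing B_{k}(M),\cal C)$ by passing through a carefully chosen ``top stratum.''

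First, by Theorem~\ref{thm:main1}, restriction identifies $\Exc_{{\rm good}}^{k}(M^{\op};\cal C)$ with $\Fun_{\istp}(N(\Disj_{\sm}^{\leq k}(M))^{\op},\cal C)$. Under this identification, $T_{k-1}F$ corresponds to the right Kan extension of $F|N(\Disj_{\sm}^{\leq k-1}(M))^{\op}$ (using that right Kan extensions compose and that, by the theorem, the $(k-1)$-excisive approximation is itself a right Kan extension from $\Disj_{\sm}^{\leq k-1}$). Since the inclusion $\Disj_{\sm}^{\leq k-1}(M)\hookrightarrow\Disj_{\sm}^{\leq k}(M)$ is fully faithful, this Kan extension is a zero functor if and only if the restriction itself is zero. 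Consequently, $\opn{Homog}_{{\rm good}}^{k}(M^{\op};\cal C)$ is identified with the full subcategory $\cal E\subset\Fun_{\istp}(N(\Disj_{\sm}^{\leq k}(M))^{\op},\cal C)$ on functors vanishing on $N(\Disj_{\sm}^{\leq k-1}(M))^{\op}$.

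Second, let $\Disj_{\sm}^{=k,\mathrm{bij}}(M)$ denote the (non-full) subcategory of $\Disj_{\sm}^{\leq k}(M)$ whose objects have exactly $k$ components and whose morphisms are those inclusions inducing a bijection on $\pi_{0}$. The key combinatorial observation is that any inclusion $U'\hookrightarrow U$ between objects of $\Disj_{\sm}^{=k}(M)$ which does not induce a bijection factors as $U'\hookrightarrow V\hookrightarrow U$ with $V\in\Disj_{\sm}^{<k}(M)$: since $|\pi_{0}U'|=|\pi_{0}U|=k$, failure of bijectivity means that some component of $U$ is disjoint from $U'$, so one may take $V$ to be the union of the remaining components. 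Using this, I would show that restriction induces an equivalence $\cal E\simeq\Fun_{\istp}(N(\Disj_{\sm}^{=k,\mathrm{bij}}(M))^{\op},\cal C)$, with inverse given by ``extension by zero'': the extension of $G'$ sends objects of $\Disj_{\sm}^{<k}(M)$ to the zero object of $\cal C$ and sends non-bijective morphisms of $\Disj_{\sm}^{=k}(M)$ to the essentially unique zero maps obtained from the factorization above.

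Finally, I would establish an equivalence
\[
N(\Disj_{\sm}^{=k,\mathrm{bij}}(M))[W^{-1}]\simeq\Sing B_{k}(M),
\]
where $W$ denotes the isotopy equivalences, via the ``take component centers'' functor, using the weak contractibility of the space of $k$-tuples of sufficiently small disjoint open disks around any fixed configuration of $k$ distinct points in $M$. Composing the three equivalences yields the stated classification. The main obstacle will be the second step: formalizing ``extension by zero'' in the $\infty$-categorical setting is more delicate than in the 1-categorical case, and I expect to carry it out by constructing the relevant Kan extension along the non-full inclusion $\Disj_{\sm}^{=k,\mathrm{bij}}(M)\hookrightarrow\Disj_{\sm}^{\leq k}(M)$ and verifying, via the combinatorial factorization, that the pertinent slice $\infty$-categories are either weakly contractible with value $0$ or have the appropriate (co)finality properties to reproduce $G'$ on the bijective stratum.
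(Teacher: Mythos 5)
Your outline coincides with the paper's: reduce to $\Fun_{\istp}(N(\Disj^{\leq k}(M))^{\op},\cal C)$ via Theorem~\ref{thm:main1}, identify the homogeneous ones with those vanishing on $N(\Disj^{\leq k-1}(M))^{\op}$, pass to the subcategory whose morphisms are $\pi_0$-bijective inclusions (these are exactly the isotopy equivalences between $k$-component objects, i.e.\ the paper's $I^{=k}_M$), and identify its classifying space with $\Sing B_k(M)$ (Proposition~\ref{prop:I_M=00003Dk_htpytype}). Your combinatorial observation that non-bijective morphisms between $k$-component objects factor through an object with fewer components is precisely what makes $\Disj^{=k}(M)$ ``isolated'' in $\Disj^{\leq k}(M)$ in the sense of the paper's Proposition~\ref{prop:zero_extension}, which is the key input for the extension-by-zero step.

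However, the specific device you propose for carrying out extension-by-zero --- left Kan extension along the non-full inclusion $j:\Disj^{=k,\mathrm{bij}}(M)\hookrightarrow\Disj^{\leq k}(M)$ (or the right Kan extension in the contravariant picture) --- does not reproduce $G'$ on the bijective stratum, so the inverse to restriction cannot be constructed this way. The comma category relevant to computing $(\Lan_j G')(C_1)$ at a $k$-component $C_1$ has objects $(X,f\colon X\hookrightarrow C_1)$ with $X\in\Disj^{=k}(M)$ and morphisms bijective inclusions over $C_1$; because $f = g\circ h$ with $g,h$ $\pi_0$-bijective forces $f$ to be $\pi_0$-bijective, there are no morphisms in either direction between the ``$f$ bijective'' and ``$f$ non-bijective'' objects, so the comma category splits as a disjoint union. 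The colimit over the first piece is $G'(C_1)$ (it has $\id_{C_1}$ terminal), but the second piece is nonempty and contributes an extra coproduct summand $\colim G'|_{\text{non-bij}}$; so $(\Lan_j G')(C_1)\simeq G'(C_1)\amalg(\text{extra})\not\simeq G'(C_1)$ in general. Thus the hoped-for (co)finality fails, and this is exactly why the paper does \emph{not} realize extension-by-zero as a Kan extension: Proposition~\ref{prop:zero_extension} instead proves directly, by an induction over simplices of $\Delta^d$, that the restriction
\[
\Fun(\cal C,\cal D)\times_{\Fun(\cal C_0,\cal D)}\Fun(\cal C_0,\cal Z)\longrightarrow\Fun(\cal C_1^{\mathrm{isltd}},\cal D)
\]
is a trivial fibration. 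If you replace your Kan-extension plan by a direct proof of this trivial-fibration statement (which your combinatorial factorization already prepares you for), the argument closes; as proposed, this step has a genuine gap.
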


\begin{rem}
Theorem \ref{thm:main2} says nothing about the fat diagonal, but
a more precise statement involving the fat diagonal statement will
follow. (See Theorem \ref{thm:inv}.) In particular, when applied
to the case where $\cal C$ is the $\infty$-category $\cal S_{\ast}$
of pointed spaces, the theorem recovers Weiss's classification of
homogeneous functors.
\end{rem}

\begin{rem}
Most of the statements of Theorems \ref{thm:main1} and \ref{thm:main2}
make sense even when $M$ does not have a smooth structure. Because
of this, we will work mostly with topological manifolds, explaining
how various statements will be affected in the presence of smooth
and PL structures here and there.
\end{rem}

\begin{rem}
In \cite{BW13}, Boavida de Brito and Weiss established an analog
of Theorem \ref{thm:Weiss2} for \textit{context-free}\footnote{The word ``context'' here has no relation with the title of this
paper.}\textit{ manifold calculus}, i.e., calculus of functors defined on
the category of \textit{all} smooth manifolds of a fixed dimension,
not just on the poset of open sets on a single manifold. We will also
state and prove a generalization of this in Section \ref{sec:context_free}.
\end{rem}

\begin{rem}
Our paper is thematically very similar to the works of Songhafouo
Tsopm\'{e}n\'{e}, Arnaud, and Stanley\cite{TS_poly,TS_vgh,TS_homog},
in which the authors ask the extent to which manifold calculus can
be developed for presheaves taking values in categories other than
spaces. (They mainly work in the setting of model categories and employ
very different technique than ours.) Our theorems will recover the
results in their paper and solve their conjecture. More specifically:
\begin{itemize}
\item By applying Theorem \ref{thm:main1} and \ref{thm:main2} to the case
where $\cal C$ is the underlying $\infty$-category of a simplicial
model category, we recover the main results of \cite{TS_poly}. 
\item By applying Theorem \ref{thm:main2} to the case where $\cal C$ is
an ordinary category, we recover the main result of \cite{TS_vgh}.
\item By applying Theorem \ref{thm:main2} to the case where $\cal C$ is
the underlying $\infty$-category of a simplicial model category $\bf C$,
we recover the main result and a conjecture of \cite{TS_homog}: For
each object $X\in\bf C$, the homotopy classes of maps $B_{k}\pr M\to B\mathrm{hAut}\pr X$
are in bijection with the weak equivalence class good functors $\Open\pr M^{\op}\to\bf C$
that are of homogeneous of degree $k$ and carrying disjoint union
of $k$ open balls to objects weakly equivalent to $X$. 
\end{itemize}
\end{rem}

At first glance, one might expect that proofs of Theorem \ref{thm:main1}
and \ref{thm:main2} are straightforward, requiring only a minor modification
of Weiss's argument. While this is true to some extent, several parts
of Weiss\textquoteright s original proof rely critically on concrete
constructions of spaces, which do not generalize easily. Moreover,
our theorems is slightly stronger than Weiss's results, in that it
gives an equivalence of $\infty$-categories, and this added generality
introduces further complications. To overcome these challenges, we
develop and apply more sophisticated $\infty$-categorical techniques.

Theorem \ref{thm:main1} and \ref{thm:main2} have a wide array of
applications outside what has traditionally been regarded as part
of manifold calculus. For example, (part of) the theory of \textit{factorization
homology} \cite{FHprimer} deals with functors of the form
\[
H:\Mfld_{\sm,n}\to\cal C,
\]
where $\Mfld_{\sm,n}$ denotes the $\infty$-category of smooth $n$-manifolds
and smooth embeddings. By precomposing the functor $\Open\pr M\to\Mfld_{\sm,n}$,
we obtain a $\cal C^{\op}$-valued precosheaf $F:\Open\pr M^{\op}\to\cal C^{\op}$.
An observation of Ayala and Francis \cite[Corollary 2.2.24]{FHprimer}
(which will be reproved in Subsection \ref{sec:taylor}) essentially
asserts that the functor $F$ is the limit of its Taylor tower, in
the sense that the map $F\to\lim_{k}T_{k}F$ is an equivalence. This
means that invariants of manifolds realized by factorization homology,
which include a very interesting class of invariants \cite{FHprimer},
can be approximated by their Taylor approximations. It is crucial
here to have a flexibility in the target category, which this paper
offers, because invariants can arise in many different forms, such
as spectra, chain complexes, or even $\infty$-operads \cite{A24d}.

\subsection*{Outline of the Paper}

This paper consists of 5 sections. Section \ref{sec:poly} concerns
the existence and classification of polynomial functors. In Section
\ref{sec:taylor}, we construct and consider the convergence problem
of Taylor towers. Section \ref{sec:homog} classifies homogeneous
functors. In Section \ref{sec:context_free}, we prove a generalization
of the results in Sections \ref{sec:poly} and \ref{sec:taylor} for
context-free manifold calculus. Section \ref{sec:bd}, discusses how
these ideas extend to manifolds with boundary

The paper is also accompanied by an appendix, consisting of three
sections. The purpose of the appendix is to record miscellaneous results
used in the main body of the paper, so it should be consulted only
when the need arises. 

\subsection*{Acknowledgment}

I thank Michael Weiss for carefully explaining some of the key ideas
of his papers, David Ayala for kindly sharing the proof of Theorem
\ref{thm:localizing_wrt_istpy}, and Alexander Kupers for generously
answering my question on MathOverflow \cite{MO452495}. I am also
indebted to Daisuke Kishimoto and Mitsunobu Tsutaya for their constant
support and encouragement. Last, but not least, I express my sincerest
gratitude to the referee for carefully reading the manuscript and
making valuable feedback, which has significantly improved this paper. 

\subsection*{Notation and Terminology}
\begin{itemize}
\item The term ``$\infty$-category'' will be used as a synonym of quasi-category
of Joyal \cite{Joyal_qcat_Kan}. We will mainly follow \cite{HTT}
for notation and terminology for $\infty$-categories, with the following
exceptions:
\begin{itemize}
\item We say that a morphism $f:S\to T$ of simplicial sets is\textbf{ final
}if it is cofinal in the sense of \cite[Definition 4.1.1.1 ]{HTT}.
We say that $f$ is \textbf{initial }if its opposite is final.
\item We say that a functor $f:\cal C\to\cal D$ of $\infty$-categories
is a \textbf{localization }with respect to a set $S$ of morphisms
of $\cal C$ if it is initial among functors inverting the morphisms
in $S$. (See Subsection \ref{subsec:loc} for a precise definition.)
This definition is more general than the definition in \cite{HTT}.
\item Except in Appendix \ref{App:(Non)-Loc}, we will not distinguish between
ordinary categories and their nerves.
\end{itemize}
\item If $\cal C$ is an $\infty$-category, we denote its maximal sub Kan
complex by $\cal C^{\simeq}$ and refer to it as the \textbf{core
}of $\cal C$.
\item Let $n\geq0$. An \textbf{$n$-manifold}, or a\textbf{ manifold of
dimension} $n$, will always mean a topological manifold without boundary,
i.e., a second countable, Hausdorff topological space which admits
an open cover by open sets homeomorphic to $\bb R^{n}$. If $n\geq1$,
we define an \textbf{$n$-manifold with boundary} to be a second countable,
Hausdorff topological space which admits an open cover by open sets
homeomorphic to $\bb R^{n}$ or $[0,\infty)\times\bb R^{n-1}$. In
particular, \textit{not every $n$-manifold with boundary is an $n$-manifold}.
\item Let $n\geq0$. Given $n$-manifolds $M$ and $N$, we let $\Emb\pr{M,N}$
denote the topological space of embeddings $M\to N$, topologized
by the compact-open topology. We let $\Mfld^{T}_{n}$ denote the topological
category whose objects are $n$-manifolds and whose hom spaces are
given by $\Emb\pr{-,-}$, and let $\Mfld^{\Delta}_{n}$ denote the
simplicial category obtained from $\Mfld^{T}_{n}$ by applying the
singular complex functor to the hom spaces. We let $\Mfld_{n}$ denote
the homotopy coherent nerve of the topological category $\Mfld^{T}_{n}$,
and let $\sf{Mfld}_{n}$ denote the nerve of the the \textit{ordinary}
category obtained from $\Mfld^{T}_{n}$ by forgetting the topology
of the mapping spaces. Equivalences of $\Mfld_{n}$ are called \textbf{isotopy
equivalences}.

We let $\Disk_{n}\subset\Mfld_{n}$ and $\sf{Disk}_{n}\subset\sf{Mfld}_{n}$
denote the full subcategories spanned by the objects that are homeomorphic
to $\bb R^{n}\times S$ for some finite set $S$. For each $k\geq0$,
we let $\Disk^{\leq k}_{n}\subset\Disk_{n}$ and $\sf{Disk}^{\leq k}_{n}\subset\sf{Disk}_{n}$
denote the full subcategories spanned by the objects with at most
$k$ components.
\item Let $n\geq0$. We let $\Mfld_{\sm,n}$ denote the homotopy coherent
nerve of the topological category of smooth $n$-manifolds and smooth
embeddings, whose mapping spaces $\Emb_{\sm}\pr{-,-}$ are topologized
by the weak topology (also called the compact-open $C^{\infty}$ topology)
of \cite[Chapter 2]{HirschDT}. We let $\Disk_{\sm,n}\subset\Mfld_{\sm,n}$
denote the full subcategory spanned by the smooth manifolds diffeomorphic
to a finite disjoint union of $\bb R^{n}$ (with the standard smooth
structure). Equivalences of $\Mfld_{\sm,n}$ will be called \textbf{smooth
isotopy equivalences}. The $\infty$-categories $\Disk^{\leq k}_{\sm,n}$,
$\sf{Mfld}_{\sm,n}$, $\sf{Disk}_{\sm,n}$, and $\sf{Disk}^{\leq k}_{\sm,n}$
are defined as in the previous point. (We will not define a PL version
of these $\infty$-categories because the author is not aware of a
good topology on PL embeddings whose paths correspond to PL isotopies.)
\item Let $n,k\geq0$. If $M$ is an $n$-manifold, we will write $\Disj\pr M\subset\Open\pr M$
for the full subcategory spanned by the open sets homeomorphic to
$\bb R^{n}\times S$ for some finite set $S$, and let $\Disj^{\leq k}\pr M$
denote its full subposet spanned by the open sets with at most $k$
components. If $M$ is smooth, we let $\Disj_{\sm}\pr M\subset\Disj\pr M$
denote the full subcategory spanned by the open sets diffeomorphic
to $\bb R^{n}\times S$ for some finite set $S$ (with the standard
smooth structure), and write $\Disj^{\leq k}_{\sm}\pr M=\Disj_{\sm}\pr M\cap\Disj^{\leq k}\pr M$.
\item Let $n\ge0$ and let $M$ be an $n$-manifold. For each finite set
$S$, we let $\Conf\pr{S,M}\subset M^{S}$ denote the subspace consisting
of the injections $S\to M$. For each $k\geq0$, we will write $\Conf\pr{k,M}=\Conf\pr{\{1,\dots,k\},M}$.
(Thus $\Conf\pr{0,M}$ is a point.) The $k$th symmetric group $\Sigma_{k}$
acts on $\Conf\pr{k,M}$ by precomposition. We will write $B_{k}\pr M=\Conf\pr{k,M}/\Sigma_{k}$
for the orbit space of this action. The points of $B_{k}\pr M$ will
be identified with subsets of $M$ of cardinality $k$.
\item Given a category $\cal C$ and a functor $F:\cal C\to\SS$ carrying
each object to an $\infty$-category, we denote its \textbf{relative
nerve} \cite[$\S$ 3.2.5]{HTT} by $\int F=\int^{\cal C}F=\int^{C\in\cal C}F\pr C$.
Recall that the projection $\int F\to\cal C$ is the cocartesian fibration
associated to the functor $F:\cal C\to\Cat_{\infty}$.
\item For each finite set $S$, we will write $\cal P\pr S$ for the poset
of subsets of $S$, and $\cal P_{0}\pr S$ for the poset of nonempty
subsets of $S$. 
\end{itemize}

\section{\label{sec:poly}Polynomial Functors}

In this section, we will show that polynomial approximations exist
and that they are constructed by Kan extending along $\Disj^{\leq k}\pr M\to\Open\pr M$
(Theorem \ref{thm:best_approx_exists}).
\begin{warning}
\label{warn:op}To avoid writing ``$\op$'' repeatedly, we will
work with \textit{co}variant functors in this section, so that polynomial
functors are described in terms of \textit{co}sheaves and \textit{left}
Kan extensions. However, in later section, we sometimes work with
the dual theory of presheaves and right Kan extensions (such as Theorems
\ref{thm:analyticity} and \ref{thm:inv}). The choice will be dictated
by the ease it takes to state and prove the relevant results.
\end{warning}

Let us start by stating the main result of this section (Theorem \ref{thm:best_approx_exists}).
\begin{defn}
\label{def:istp_exh_poly}Let $k\geq0$, let $M$ be a manifold, and
let $\cal C$ be an $\infty$-category with finite colimits. Let $F:\Open\pr M\to\cal C$
be a functor.
\begin{enumerate}
\item Let $\cal J\subset\Open\pr M$ be a subcategory, and let $G:\cal J\to\cal C$
be a functor. We say that $G$ is \textbf{isotopy invariant} if it
carries each morphism in $\cal J$ which is an isotopy equivalence
to an equivalence of $\cal C$. Isotopy invariant functors will be
called \textbf{isotopy functors}. We will write $\Fun_{\istp}\pr{\cal J,\cal C}\subset\Fun\pr{\cal J,\cal C}$
for the full subcategory spanned by the isotopy functors.
\item We say that $F$ is\textbf{ exhaustive} if for each increasing sequence
$U_{0}\subset U_{1}\subset\cdots$ of open sets of $M$, the map
\[
\colim_{i}F\pr{U_{i}}\to F\pr{\bigcup_{i\geq0}U_{i}}
\]
is an equivalence.
\item We say that $F$ is \textbf{$k$-excisive}, or \textbf{polynomial
of degree $\leq k$}, if for each open set $U$ of $M$ and each pairwise
disjoint closed subsets $A_{0},\dots,A_{k}$ of $U$, the map
\[
\colim_{\emptyset\neq S\subset\{0,\dots,k\}}F\pr{U\setminus\bigcup_{i\in S}A_{i}}\to F\pr U
\]
is an equivalence. 
\item We let $\Exc^{k}_{\istp,\exh}\pr{M;\cal C}\subset\Fun\pr{\Open\pr M,\cal C}$
denote the full subcategory spanned by the $k$-excisive, exhaustive
isotopy functors. 
\end{enumerate}
Here is the main result of this section.
\end{defn}

\begin{thm}
\label{thm:best_approx_exists}Let $k\geq0$, let $M$ be a manifold,
and let $\cal C$ be an $\infty$-category with small colimits. The
left Kan extension functor $\Fun\pr{\Disj^{\leq k}\pr M,\cal C}\to\Fun\pr{\Open\pr M,\cal C}$
restricts to a categorical equivalence 
\[
\Fun_{\istp}\pr{\Disj^{\leq k}\pr M,\cal C}\xrightarrow{\simeq}\Exc^{k}_{\istp,\,\exh}\pr{M;\cal C}.
\]
In particular, the inclusion
\[
\Exc^{k}_{\istp,\exh}\pr{M;\cal C}\hookrightarrow\Fun_{\istp}\pr{\Open\pr M,\cal C}
\]
admits a right adjoint, which carries each object $F\in\Fun_{\istp}\pr{\Open\pr M,\cal C}$
to a left Kan extension of $F\vert\Disj^{\leq k}\pr M$.
\end{thm}

We will deduce Theorem \ref{thm:best_approx_exists} from two results:
The first one is the following lemma, which is a consequence of Ayala--Francis's
localization theorem (Theorem \ref{thm:localizing_wrt_istpy}). Recall
that the theorem says that the functor $\Disj^{\leq k}\pr M\to\Disk^{\leq k}_{n/M}$
is a localization.
\begin{lem}
\label{lem:istp_Kan}Let $k\geq0$, let $M$ be a manifold, let $\cal C$
be an $\infty$-category with small colimits, and let $F:N\pr{\Open\pr M}\to\cal C$
be a functor which is a left Kan extension of $F\vert\Disj^{\leq k}\pr M$.
If $F\vert\Disj^{\leq k}\pr M$ is an isotopy functor, so is $F$.
\end{lem}

\begin{proof}
Since localizations are final \cite[\href{https://kerodon.net/tag/02N9}{Tag 02N9}]{kerodon},
Theorem \ref{thm:localizing_wrt_istpy} gives an equivalence
\[
F\pr U\simeq\colim_{V\in\Disj^{\leq k}\pr U}F\pr V\xrightarrow{\simeq}\colim_{V\in\Disk^{\leq k}_{n/U}}F\pr V
\]
natural in $U\in\Open\pr M$. The right-hand side is manifestly isotopy
invariant, and we are done.
\end{proof}

The second one concerns the identification of polynomial functors
with functors enjoying certain gluing properties. The gluing properties
are best expressed using cosheaves with respect to Grothendieck topologies
on $\Open\pr M$, which we now introduce.

\begin{defn}
Let $\cal C$ be an $\infty$-category, and let $\cal A$ be another
$\infty$-category equipped with a Grothendieck topology \cite[Definition 6.2.2.1]{HTT}.
We will say that a functor $F:\cal A\to\cal C$ is a \textbf{cosheaf}
(with respect to the Grothendieck topology) if for each object $A\in\cal A$
and each covering sieve $\cal A^{0}_{/A}\subset\cal A_{/A}$ of $A$,
the composite
\[
\pr{\cal A^{0}_{/A}}^{\rcone}\to\cal A\xrightarrow{F}\cal C
\]
is a colimit diagram.
\end{defn}

\begin{defn}
\label{def:Weiss_cover}Let $M$ be a manifold, and let $k\geq0$.
A subset $\cal U\subset\Open\pr M$ of $M$ is called a \textbf{Weiss
$k$-cover} if for each subset $S\subset M$ of cardinality $\leq k$,
there is an element of $\cal U$ which contains $S$. The \textbf{Weiss
$k$-topology} on $M$ is the Grothendieck topology on $\Open\pr M$
whose covering sieves of each open set $U\in\Open\pr M$ are precisely
the Weiss $k$-covers $\cal U$ of $U$ that are also sieves on $U$.
If $\cal C$ is an $\infty$-category, a functor $F:\Open\pr M\to\cal C$
is called a \textbf{Weiss $k$-cosheaf} if it is a cosheaf with respect
to the Weiss $k$-topology. The \textbf{Weiss topology }on $M$ is
the intersection of the Weiss $k$-topologies as $k$ ranges over
all nonnegative integers. A \textbf{Weiss cosheaf} is a cosheaf with
respect to the Weiss topology.
\end{defn}

\begin{example}
\label{exa:Weiss_k-cosheaf}Let $M$ be a manifold, and let $\cal C$
be an $\infty$-category. 
\begin{enumerate}
\item Any nonempty collection of open sets of $M$ is a Weiss $0$-cover
of $X$. Consequently, a functor $F:\Open\pr M\to\cal C$ is a Weiss
$0$-cosheaf if and only if it is \textbf{essentially constant}, i.e.,
it factors through a contractible Kan complex. Since $\Open\pr M$
is already weakly contractible, this is equivalent to the requirement
that $F$ carries all morphisms to equivalences.
\item A functor $F:\Open\pr M\to\cal C$ is a Weiss $1$-cosheaf if and
only if for each nonempty sieve $\cal U\subset\Open\pr M$, the map
\[
\colim_{U\in\cal U}F\pr U\to F\pr{\bigcup_{U\in\cal U}U}
\]
is an equivalence. Thus a Weiss $1$-cosheaf is a cosheaf on $X$
(that is, a cosheaf with respect to the standard Grothendieck topology
on $\Open\pr M$) if and only if $F\pr{\emptyset}$ is an initial
object.
\end{enumerate}
\end{example}

\begin{defn}
\label{def:Weiss_condition}Let $k\geq0$, let $M$ be a manifold,
and let $\chi:\cal I\to\Open\pr M$ be a functor of small $\infty$-categories.
For each finite set $S\subset M$, let $\cal I_{S}\subset\cal I$
denote the full subcategory spanned by the objects $I\in\cal I$ such
that $S\subset\chi\pr I$.
\begin{enumerate}
\item We say that $\chi$ satisfies the \textbf{Weiss $k$-condition} if
for each subset $S\subset M$ of cardinality at most $k$, the $\infty$-category
$\cal I_{S}$ is weakly contractible.
\item We say that $\chi$ satisfies the \textbf{Weiss condition} if for
every finite subset $S\subset M$, the $\infty$-category $\cal I_{S}$
is weakly contractible.
\end{enumerate}
\end{defn}

We now arrive at the identification result of polynomial functors
with cosheaves and Kan extensions, which we prove at the end of this
section.
\begin{thm}
\label{thm:Weiss_k-cosheaf}Let $k\geq0$, let $M$ be a manifold,
let $\cal C$ be an $\infty$-category with small colimits, and let
$F:\Open\pr M\to\cal C$ be an isotopy functor. The following conditions
are equivalent:
\begin{enumerate}
\item The functor $F$ is a left Kan extension of $F\vert\Disj^{\leq k}\pr M$.
\item Let $\cal I$ be a small $\infty$-category, let $U\subset M$ be
an open set, and let $\chi:\cal I\to\Open\pr U$ be a functor satisfying
the Weiss $k$-condition. Then the map
\[
\colim_{I\in\cal I}F\pr{\chi\pr I}\to F\pr U
\]
is an equivalence of $\cal C$.
\item The functor $F$ is a Weiss $k$-cosheaf.
\item The functor $F$ is $k$-excisive and exhaustive.
\end{enumerate}
\end{thm}

For later discussions, we will also prove the following limit case
of Theorem \ref{thm:Weiss_k-cosheaf}:
\begin{thm}
\label{thm:Weiss_cosheaf}Let $M$ be a manifold, let $\cal C$ be
an $\infty$-category with small colimits, and let $F:\Open\pr M\to\cal C$
be an isotopy functor. The following conditions are equivalent:
\begin{enumerate}
\item The functor $F$ is a left Kan extension of $F\vert\Disj\pr M$.
\item Let $\cal I$ be a small $\infty$-category, let $U\subset M$ be
an open set, and let $\chi:\cal I\to\Open\pr U$ be a functor which
satisfies the Weiss condition. Then the map
\[
\colim_{I\in\cal I}F\pr{\chi\pr I}\to F\pr U
\]
is an equivalence of $\cal C$.
\item The functor $F$ is a Weiss cosheaf.
\end{enumerate}
\end{thm}

Using Lemma \ref{lem:istp_Kan} and Theorem \ref{thm:Weiss_k-cosheaf},
we can prove Theorem \ref{thm:best_approx_exists} as follows:
\begin{proof}
[Proof of Theorem \ref{thm:best_approx_exists}]By Lemma \ref{lem:istp_Kan},
the left Kan extension functor restricts to a left adjoint
\[
L:\Fun_{\istp}\pr{\Disj^{\leq k}\pr M,\cal C}\to\Fun_{\istp}\pr{\Open\pr M,\cal C},
\]
which is fully faithful by \cite[Proposition 4.3.2.15]{HTT}. By Theorem
\ref{thm:Weiss_k-cosheaf}, the essential image of $L$ is $\Exc^{k}_{\istp,\exh}\pr{M;\cal C}$.
The claim follows.
\end{proof}

We now turn to the proofs of Theorems \ref{thm:Weiss_k-cosheaf} and
\ref{thm:Weiss_cosheaf}. We need a few lemmas.

The first lemma gives a sufficient condition for a functor into $\Disk_{n/M}$
(or $\Disk^{\leq k}_{n/M}$ to be final.
\begin{lem}
\label{lem:final_slice_mfd}Let $n,k\geq0$, let $\cal I$ be a (small)
category and let $\overline{f}:\cal I^{\rcone}\to\Mfld_{n}$ be a
functor. Set $\overline{f}\pr I=U_{I}$ for $I\in\cal I$ and $\overline{f}\pr{\infty}=M$.
For each subset $S\subset M$, let $\cal I_{S}\subset\cal I$ denote
the full subcategory spanned by the objects $I\in\cal I$ such that
$U_{I}$ contains $S$.
\begin{enumerate}
\item Suppose that $U_{I}\in\Disk_{n}$ for every $I\in\cal I$. If $\cal I_{S}$
is weakly contractible for every finite set $S\subset M$, then the
functor $\cal I\to\Disk_{n}{}_{/M}$ is final.
\item Suppose that $U_{I}\in\Disk^{\leq k}_{n}$ for every $I\in\cal I$.
If $\cal I_{S}$ is weakly contractible for every finite set $S\subset M$
of cardinality $\leq k$, then the functor $\cal I\to\Disk^{\leq k}_{n/M}$
is final.
\end{enumerate}
\end{lem}

\begin{proof}
We prove part (1); the proof of part (2) is similar. By Proposition
\ref{prop:final_map_into_slice} and \cite[Theorem 4.2.4.1]{HTT},
it will suffice to show that, for each object $V\in\Disk_{n}$, the
map
\[
\hocolim_{I\in\cal I}\Sing\Emb\pr{V,U_{I}}\to\Sing\Emb\pr{V,M}
\]
is a weak homotopy equivalence of simplicial sets. If $V=\emptyset$,
the claim is obvious because $\cal I=\cal I_{\emptyset}$ is weakly
contractible by hypothesis. So suppose that $V$ is nonempty. Let
$p$ denote the cardinality of the set of components of $V$, and
fix a homeomorphism $V\cong\bb R^{n}\times\{1,\dots,p\}$. By Proposition
\ref{prop:emb_conf_cart}, the evaluation at the origin of $\bb R^{n}$
determines a homotopy cartesian square 
\[\begin{tikzcd}
	{\operatorname{Sing}\operatorname{Emb}(V,U_I)} & {\operatorname{Sing}\operatorname{Emb}(V,M)} \\
	{\operatorname{Sing}\operatorname{Conf}(p,U_I)} & {\operatorname{Sing}\operatorname{Conf}(p,M)}
	\arrow[from=2-1, to=2-2]
	\arrow[from=1-1, to=1-2]
	\arrow[from=1-1, to=2-1]
	\arrow[from=1-2, to=2-2]
\end{tikzcd}\]for every $I\in\cal I$. Since colimits in $\cal S$ are universal,
we are reduced to showing that the map
\[
\hocolim_{I\in\cal I}\Sing\Conf\pr{p,U_{I}}\to\Sing\Conf\pr{p,M}
\]
is a weak homotopy equivalence. This is a direct consequence of our
hypothesis and \cite[Theorem A.3.1]{HA}.
\end{proof}

Our next lemma is completely elementary in the topological case and
the PL case, but we include a proof using Morse theory because we
need it in the smooth version of Lemma \ref{lem:MV}.
\begin{lem}
\label{lem:removing_disks}Let $n\geq1$, $k\geq0$, and let $-1<c_{0}<\cdots<c_{k}<1$
and $r>0$ be real numbers. Set
\[
C_{i}=\{x\in\bb R^{n}\mid\pr{x_{1}-c_{i}}^{2}+\sum_{1<i\leq n}x^{2}_{i}\leq r^{2}\},
\]
and suppose that the sets $C_{0},\dots,C_{k}$ are mutually disjoint
and are contained in the interior of $D^{n}$. Then $\bb R^{n}\setminus\opn{Int}\pr{C_{0}\cup\cdots\cup C_{k}}$
is obtained from $\{x\in\bb R^{n}\mid\norm x\geq1\}$ by attaching
an $\pr{n-1}$-handle $k$ times. (See Figure \ref{fig:handle}.)
\end{lem}

\begin{figure}[h]
\includegraphics[scale=0.6]{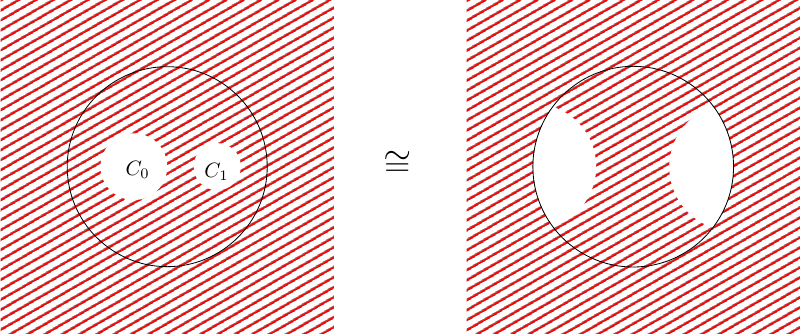}

\caption{\label{fig:handle}Picture of Lemma \ref{lem:removing_disks}.}
\end{figure}

\begin{proof}
Using bump functions, construct a smooth function $\phi:\bb R\to\bb R$
with the following properties (Figure \ref{fig:phi}):
\begin{enumerate}
\item For each $0\leq i\leq k$, the restriction $\phi\vert[c_{i}-r,c_{i}+r]$
is given by $\phi\pr x=-\pr{x-c_{i}}^{2}$. 
\item The derivative of $\phi$ is positive on $\pr{-\infty,c_{1}}$ and
is negative on $\pr{c_{k},\infty}$.
\item The function $\phi$ agrees with $-x^{2}$ outside $[-1,1]$
\item For each $1\leq i\leq k$, there is a unique point $a_{i}\in\pr{c_{i-1},c_{i}}$
such that $\phi'\pr{a_{i}}=0$. 
\item For each $0<i\leq k$, we have $\phi''\pr{a_{i}}>0$.
\item We have $-1<\phi\pr{a_{1}}<\cdots<\phi\pr{a_{k}}$.
\end{enumerate}
\begin{figure}[h]
\includegraphics[scale=0.6]{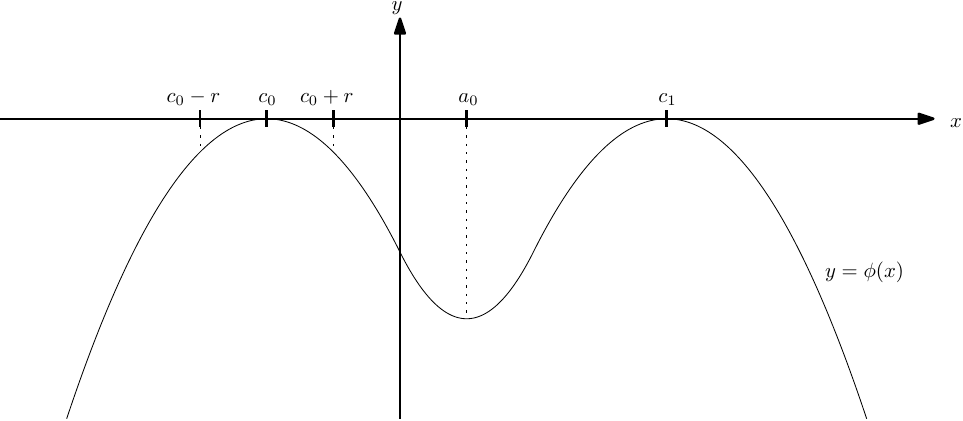}

\caption{\label{fig:phi}Graph of $\phi$.}
\end{figure}

We then define $F:\bb R^{n}\to\bb R$ by $F\pr x=\phi\pr{x_{1}}-\sum_{1<i\leq n}x^{2}_{i}$.
By construction, the critical points of $F$ in $F^{-1}\pr{(-\infty,r^{2}]}$
are the points $\{\pr{a_{i},0,\dots,0}\}_{1\leq i\leq k}$, and all
of them are nondegenerate and have index $n-1$. Therefore, Morse
theory (see, e.g., \cite[Proposition VII.2.2]{Kosinski}) shows that
the set $\bb R^{n}\setminus\opn{Int}\pr{C_{0}\cup\cdots\cup C_{k}}=F^{-1}\pr{(-\infty,-r^{2}]}$
is obtained from $F^{-1}\pr{(-\infty,-1]}=\{x\in\bb R^{n}\mid\norm x\geq1\}$
by attaching $\pr{n-1}$-handles $k$ times. 
\end{proof}

The next lemma is principle for a variant of Mayer--Vietoris argument
(\cite[Chapter 1, $\S$5]{BottTu}) for $k$-excisive functors.
\begin{lem}
\label{lem:MV}Let $n,k\geq0$, and let $M$ be an $n$-manifold.
Let $\cal U$ be a set of open sets of $M$ which satisfies the following
conditions:
\begin{enumerate}
\item The set $\cal U$ contains $\Disj^{\leq k}\pr M$.
\item Let $U_{0}\subset U_{1}\subset\cdots$ be an increasing sequence of
elements in $\cal U$. Then $\bigcup_{i\geq0}U_{i}$ belongs to $\cal U$.
\item Let $U\subset M$ be an open set and let $A_{0},\dots,A_{k}$ be pairwise
disjoint closed sets of $U$. Suppose that, for each nonempty subset
$S\subset\{0,\dots,k\}$, the open set $U\setminus\bigcup_{i\in S}A_{i}$
belongs to $\cal U$. Then $U$ belongs to $\cal U$.
\end{enumerate}
Then $\cal U=\Open\pr M$.
\end{lem}

\begin{proof}
The proof proceeds in several steps.

\begin{enumerate}[label=(\textit{Step\arabic*}), wide, itemsep = 1em]

\item We show that $\cal U$ contains $\Disj\pr M$. It suffices
to show that $\Disj^{\leq p}\pr M\subset\cal U$ for every $p\geq0$.
We prove this by induction on $p$. If $p\leq k$, the claim follows
from our assumption (1). Suppose we have proved that $\Disj^{\leq p-1}\pr M\subset\cal U$
for some $p>k$, and let $U\in\Disj^{\leq p}\pr M$. We wish to show
that $U\in\cal U$. Let $A_{0},\dots,A_{k}$ be distinct components
of $U$. (There may be other components, but choose $k+1$ one of
them.) For every nonempty subset $S\subset\{0,\dots,k\}$, the open
set $U\setminus\bigcup_{i\in S}A_{i}$ belongs to $\Disj^{\leq p-1}\pr M$
and hence to $\cal U$. It follows from the induction hypothesis and
condition (3) that $U\in\cal U$, completing the induction.

\item Let $U\subset M$ be an open set which has the form $U=\opn{Int}N$
for some compact manifold $N$ with boundary admitting a handle decomposition.
We will show that $U$ belongs to $\cal U$. 

Let $\pr{a_{0},\dots,a_{n}}\in\bb Z^{n+1}_{\geq0}$ be an $\pr{n+1}$-tuple
of nonnegative integers. We say that a handle decomposition $\emptyset=N_{0}\subset\cdots\subset N_{\sum^{n}_{i=0}a_{i}}=N$
of $N$ is of \textbf{type} $\pr{a_{0},\dots,a_{n}}$ if for each
$0\leq d\leq n$, there are exactly $a_{d}$ integers $i$ such that
$N_{i-1}$ is obtained from $N_{i}$ by attaching a $d$-handle. We
define the \textbf{handle type} of $N$ to be the minimal element
$\pr{a_{0},\dots,a_{n}}\in\bb Z^{n+1}_{\geq0}$ for which $P$ admits
a handle decomposition of type $\pr{a_{0},\dots,a_{n}}$, where $\bb Z^{n+1}_{\geq0}$
is endowed with the lexicographic ordering read from right to left.
(In other words, given distinct elements $\pr{a_{0},\dots,a_{n}},\pr{b_{0},\dots,b_{n}}\in\bb Z^{n+1}_{\geq0}$,
we declare that $\pr{b_{0},\dots,b_{n}}<\pr{a_{0},\dots,a_{n}}$ if
and only if $b_{i}<a_{i}$, where $i$ is the maximal integer such
that $a_{i}\neq b_{i}$.) We will show by transfinite induction on
the handle type of $N$ that $U$ belongs to $\cal U$.

Let $\pr{a_{0},\dots,a_{n}}\in\bb Z^{n+1}_{\geq0}$ be the handle
type of $N$. Suppose that the claim has been proved for every element
$\pr{a'_{0},\dots,a'_{n}}\in\bb Z^{n+1}_{\geq0}$ smaller than $\pr{a_{0},\dots,a_{n}}$.
We must show that $U$ belongs to $\cal U$. If $a_{1}=\cdots=a_{n}=0$,
then $U$ belongs to $\Disj\pr M$, and the claim follows from Step
1. So suppose that $a_{i}>0$ for some $i$. Let 
\[
\emptyset=N_{0}\subset\cdots\subset N_{a}=N
\]
be a handle decomposition of $N$ of type $\pr{a_{0},\dots,a_{n}}$,
where we wrote $a=\sum^{n}_{i=0}a_{i}$. Since attaching a $0$-handle
is equivalent to adding a disjoint copy of $D^{n}$, by rearranging
the order of the handle attachment, we may assume that $N_{a}$ is
obtained from $N_{a-1}$ by attaching a handle of positive index,
say $\lambda$. Let $e:D^{\lambda}\times D^{n-\lambda}\to N_{a}$
denote the corresponding embedding. Choose disjoint closed disks $C_{0},\dots,C_{k}\subset\opn{Int}D^{\lambda}$
and set $A_{j}=C_{j}\times D^{n-\lambda}$. If $S\subset\{0,\dots,k\}$
is a nonempty subset, then we have
\begin{align*}
U\setminus\bigcup_{j\in S}A_{j} & =\opn{Int}\pr{N_{a}\setminus\bigcup_{j\in S}e\pr{\opn{Int}\pr{C_{j}}\times D^{n-\lambda}}}\\
 & =\opn{Int}\pr{N_{a-1}\amalg_{S^{\lambda-1}\times D^{n-\lambda}}e\pr{\pr{D^{\lambda}\setminus\bigcup_{j\in S}\opn{Int}\pr{C_{j}}}\times D^{n-\lambda}}}.
\end{align*}
Now according to Lemma \ref{lem:removing_disks}, the manifold with
boundary $N_{a}\pr S=N_{a-1}\amalg_{S^{\lambda-1}\times D^{n-\lambda}}e\pr{\pr{D^{\lambda}\setminus\bigcup_{j\in S}\opn{Int}\pr{C_{j}}}\times D^{n-\lambda}}$
admits a handle decomposition of type $\pr{b_{0},\dots,b_{n}}$, where
\[
b_{i}=\begin{cases}
a_{i} & \text{if }i\neq\lambda,\lambda-1,\\
a_{\lambda-1}+\abs S-1 & \text{if }i=\lambda-1,\\
a_{\lambda}-1 & \text{if }i=\lambda.
\end{cases}
\]
Since $\pr{b_{0},\dots,b_{n}}<\pr{a_{0},\dots,a_{n}}$, the induction
hypothesis implies that $\opn{Int}\pr{N_{a}\pr S}$ belongs to $\cal U$.
It follows from condition (3) that $U$ belongs to $\cal U$, as desired.

\item We show that if $U$ is smoothable or $n\neq4$, then $U$
belongs to $\cal U$. By Step 2 and assumption (2), it suffices to
show that $U$ has a (possibly infinite) handle decomposition. If
$U$ is smoothable, then it has a handle decomposition by Morse theory
\cite[Lemma 5.1.8]{WallDT}. If $n=5$, this is \cite[Theorem 2.3.1]{Quinn82},
and if $n\geq6$, this is \cite[Essay III, Theorem 2.1]{KS_FETM}.
If $n\leq3$, then it suffices to show that $U$ has a smooth structure.
If $n=1$, this follows from the classification of $1$-manifolds
\cite[Theorem 5.27]{LeeITM}. If $n=2$, this is \cite[Theorem A]{Hatcher22}.
If $n=3$, then by \cite[Theorem 3.10.8]{Thurston97}, it suffices
to show that $U$ has a PL structure. This is \cite[Theorem 2]{Hamilton76}.

\item We finish off the proof by considering the case where $n=4$.
Let $U\subset M$ be a nonempty finite set. For each path component
$V\subset U$, choose $\pr{k+1}$ points $p^{V}_{0},\dots,p^{V}_{k}\in V$,
and set $A_{j}=\{p^{V}_{j}\mid V\in\pi_{0}\pr U\}$ for $0\leq j\leq k$.
Then $A_{0},\dots,A_{k}$ are disjoint closed subsets of $U$, and
moreover for each nonempty finite set $S\subset\{0,\dots,k\}$, the
components of the open set $U\setminus\bigcup_{i\in S}A_{i}$ are
noncompact. Since connected, non-compact $4$-manifolds are smoothable
\cite[8.2]{FQ90}, we deduce from Step 2 that $\bigcap_{i\in S}U\setminus A_{i}$
belongs to $\cal U$. Condition (3) now shows that $U$ belongs to
$\cal U$, and the proof is complete.\qedhere

\end{enumerate}
\end{proof}

\begin{rem}
Our proof of Lemma \ref{lem:MV} is a modification of Weiss's proof
of \cite[Theorem 5.1]{Weiss99}, where he essentially established
the lemma in the case where $M$ is smoothable. The topological case
is a bit harder because we do not have a control of the order of the
indices of handles.
\end{rem}

\begin{rem}
\label{rem:MV}We have stated Lemma \ref{lem:MV} to highlight its
relation to exhaustivity and excisivity of functors. However, a closer
inspection of the proof actually shows that we can replace condition
(2) as follows:
\begin{itemize}
\item [(2$'$)]Let $U_{0}\subset U_{1}\subset\cdots$ be an increasing sequence
of elements in $\cal U$, where each $U_{i}$ is has compact closure
in $U$. Then $\bigcup_{i\geq0}U_{i}$ belongs to $\cal U$.
\end{itemize}
If $M$ is a PL manifold and $\Disj\pr M\subset\cal U$, then we can
also replace condition (3) by a similar condition in which the $A_{i}$'s
are compact subcomplexes of $U$ for some triangulation of $U$ coming
from the PL structure, by using a handle decomposition associated
to the triangulation. (See \cite[6.9]{RS72}.)

\end{rem}

We now arrive at the proof of Theorems \ref{thm:Weiss_k-cosheaf}
and \ref{thm:Weiss_cosheaf}.
\begin{proof}
[Proof of Theorem \ref{thm:Weiss_k-cosheaf}]Clearly (2)$\implies$(3)$\implies$(4).
It will therefore suffice to show that (1)$\implies$(2) and (4)$\implies$(1).

First we show that (1)$\implies$(2). Suppose that $F$ is a left
Kan extension of $F\vert\Disj^{\leq k}\pr M$. Let $\cal I'$ be a
small $\infty$-category, let $U\subset M$ be an open set, and let
$\chi:\cal I'\to\Open\pr U$ be a functor satisfying the Weiss $k$-condition.
For each $I\in\cal I'$, set $U_{I}=\chi\pr I$. We must show that
the map 
\[
\theta:\colim_{I\in\cal I'}F\pr{U_{I}}\to F\pr U
\]
is an equivalence of $\cal C$. According to \cite[\href{https://kerodon.net/tag/02MD}{Tag 02MD}]{kerodon}
and \cite[\href{https://kerodon.net/tag/02N9}{Tag 02N9}]{kerodon},
there is a small category $\cal I$ and a final functor $\cal I\to\cal I'$
whose pullbacks are all final. Then the composite $\chi\circ p$ satisfies
the Weiss $k$-condition, so it suffices to consider the case where
$\cal I'=\cal I$.

Set $n=\dim M$. Using Theorem \ref{thm:localizing_wrt_istpy} (and
the fact that localizations are final), we can identify $\theta$
with the map
\[
\colim_{\pr{I,V}\in\int^{I\in\cal I}\Disk^{\leq k}_{n/U_{I}}}F\pr V\to\colim_{V\in\Disk^{\leq k}_{n/U}}F\pr V
\]
induced by the functor $\phi:\int^{I\in\cal I}\Disk^{\leq k}_{n/U_{I}}\to\Disk^{\leq k}_{n/U}$.
It will therefore suffice to show that $\phi$ is final. For this,
we observe that the map $\psi:\int^{I\in\cal I}\Disj^{\leq k}\pr{U_{I}}\to\int^{I\in\cal I}\Disk^{\leq k}_{n/U_{I}}$
is final (Proposition \ref{prop:fiberwise_finality}). Since final
maps have the right cancellation property \cite[Corollary 4.1.9]{CisinskiHCHA},
it suffices to show that the composite $\phi\circ\psi$ is final.
We prove this by using Lemma \ref{lem:final_slice_mfd}: We must show
that, for each finite set $S\subset U$, the category $\int^{I\in\cal I_{S}}\Disj^{\leq k}\pr{U_{I}}_{S}$
is final. For this, observe that the projection $\int^{I\in\cal I_{S}}\Disj^{\leq k}\pr{U_{I}}_{S}\to\cal I_{S}$
is a cocartesian fibration whose fibers are weakly contractible (as
they are cofiltered), and that its base is also weakly contractible
by hypothesis. It follows from Quillen's theorem B (Proposition \ref{prop:Theorem_B})
that $\int^{I\in\cal I_{S}}\Disj^{\leq k}\pr{U_{I}}_{S}$ is also
weakly contractible, as claimed.

We now complete the proof by showing that (4)$\implies$(1). Let $G$
be a left Kan extension of $F\vert\Disj\pr M$, and let $\alpha:G\to F$
be a natural transformation which extends the identity natural transformation
of $F\vert\Disj\pr M$. We must show that $\alpha$ is a natural equivalence. 

Call an open set $U\subset M$ \textbf{good} if the map $\alpha_{U}$
is an equivalence. We wish to show that every open set of $M$ is
good. Since $F$ and $G$ are both $k$-excisive and exhaustive (for
we already know that (1)$\implies$(4)), the collection of good open
subsets of $M$ satisfies the hypotheses of Lemma \ref{lem:MV}. Therefore,
every open set of $M$ is good, and we are done.
\end{proof}

\begin{proof}
[Proof of Theorem \ref{thm:Weiss_cosheaf}]Clearly (2)$\implies$(3).
Also, we can prove (1)$\implies$(2) in exactly the same way as we
proved the implication (1)$\implies$(2) of Theorem \ref{thm:Weiss_k-cosheaf}.
It will therefore suffice to show that (3)$\implies$(1).

Suppose that $F$ is a Weiss cosheaf. Let $G$ be a left Kan extension
of $F\vert\Disj\pr M$. Note that $G$ is a Weiss cosheaf because
we have already shown that (1)$\implies$(3). Let $\alpha:G\to F$
be a natural transformation which extends the identity natural transformation
of $F\vert\Disj\pr M$. We must show that $\alpha$ is a natural equivalence. 

Call an open set $U\subset M$ \textbf{good} if the map $\alpha_{U}$
is an equivalence. We wish to show that every open set of $M$ is
good. We prove this in two steps.

Let $U\subset M$ be an open set homeomorphic to an open subset of
$\bb R^{n}$, where $n=\dim M$. We claim that $U$ is good. Choose
an embedding $U\hookrightarrow\bb R^{n}$ and identify $U$ with a
subset of $\bb R^{n}$. Let $\opn{Cube}\pr U$ denote the poset of
open subsets of $U$ that are finite disjoint unions of open cubes
lying in $U$, and let $\overline{\opn{Cube}\pr U}$ denote the poset
of open subsets of elements of $\opn{Cube}\pr U$. Since $\opn{Cube}\pr U$
is closed under finite intersection, the inclusion $\opn{Cube}\pr U\to\overline{\opn{Cube}\pr U}$
is final. Moreover, $\overline{\opn{Cube}\pr U}$ is a covering sieve
of $U$ in the Weiss topology on $\Open\pr M$. Since every element
of $\opn{Cube}\pr U$ is good, it follows that $\alpha_{U}$ is an
equivalence.

Next, let $U\subset M$ be an arbitrary subset. We claim that $U$
is good. Let $\cal U$ denote the set of all open sets of $U$ that
are homeomorphic to an open subset of $\bb R^{n}$. By the result
in the previous paragraph, every element in $\cal U$ is good. Moreover,
$\cal U$ is a covering sieve of $U$ in the Weiss topology. It follows
that $\alpha_{U}$ is an equivalence, and the proof is complete.
\end{proof}

\begin{rem}
Using Variant \ref{var:o-small}, we can show that everything in this
section is valid in the smooth case if we make the following replacements:
\begin{itemize}
\item manifolds by smooth manifolds;
\item isotopy equivalences by smooth isotopy equivalences;
\item $\Disj\pr -$ and $\Disj^{\leq k}\pr -$ by $\Disj_{\sm}\pr -$ and
$\Disj^{\leq k}_{\sm}\pr -$;
\end{itemize}
A similar remark applies to the PL case.
\end{rem}

\begin{rem}
Let $M$ be a manifold, and let $\cal C$ be a locally small $\infty$-category
with small colimits. Isotopy invariance, exhaustivity, and $k$-excisivity
of a functor $F:\Open\pr M\to\cal C$ can be detected jointly by the
hom-functors (i.e., by the composites $\Open\pr M^{\op}\xrightarrow{F}\cal C^{\op}\xrightarrow{\cal C\pr{-,C}}\cal S$,
where $C$ ranges over the objects in $\cal C$). Therefore, some
of the key results of this section (like Lemma \ref{lem:istp_Kan})
could have been proved by outsourcing everything to the case of space-valued
presheaves, covered in \cite{Weiss99}. We have opted for the current
approach because we believe that it is more conceptual, and also because
it clarifies the connection between polynomials and Weiss sheaves.
\end{rem}

\section{\label{sec:taylor}Taylor Towers and Their Convergence}

In the previous section, we saw that polynomial approximations exist,
and that they are constructed by Kan extensions. In this section,
we use it to construct the Taylor (co)tower of an isotopy functor
and discuss when the tower converges to the original functor.

Let $M$ be a manifold, and let $\cal C$ be an $\infty$-category
with small colimits. Theorem \ref{thm:best_approx_exists} says that
for each isotopy functor $F:\Open\pr M\to\cal C$ and each $k\geq0$,
there is a best approximation of $F$ by $k$-excisive, exhaustive
isotopy functor, namely, the left Kan extension $T_{k}F$ of $F\vert\Disj^{\leq k}\pr M$.
This is called the \textbf{$k$th polynomial approximation} of $F$.
We now organize these approximations into a cotower
\[
T_{0}F\to T_{1}F\to\cdots\to F.
\]

For later discussions, we work in a slightly more general setting.
\begin{defn}
\label{def:Taylor}Let $\cal A$ be an $\infty$-category equipped
with a sequence $\cal A^{0}\subset\cal A^{1}\subset\cdots$ of full
subcategories. Set $\cal A^{\infty}=\bigcup_{i}\cal A^{i}$. We let
$\bb T\pr{\cal A}\subset\pr{\bb Z_{\geq0}\star\{\infty\}}\times\cal A$
denote the full subcategory spanned by the objects $\pr{i,A}$, where
$i\in\bb Z_{\geq0}$ and $A\in\cal A_{i}$, together with the objects
$\pr{\infty,A}$, where $A\in\cal A$. 

Let $\cal C$ be another $\infty$-category with small colimits, and
let $F:\cal A\to\cal C$ be a functor. A functor $\bb T\pr F:\pr{\bb Z_{\geq0}\star\{\infty\}}\times\cal A\to\cal C$
is called a \textbf{Taylor cotower} of $F$ (with respect to the subcategories
$\{\cal A^{i}\}_{i\geq0}$) if it is a left Kan extension of the composite
\[
\bb T\pr{\cal A}\xrightarrow{\text{projection}}\cal A\xrightarrow{F}\cal C.
\]
For each $k\in\bb Z_{\geq0}\star\{\infty\}$, we set $T_{k}F=\bb T\pr F\vert\{k\}\times\cal A$.
Note that for each $k\in\bb Z_{\geq0}$ and $A\in\cal A$, the inclusion
\[
\{\id_{k}\}\times\cal A^{k}_{/A}\hookrightarrow\bb T\pr{\cal A}_{/\pr{k,A}}
\]
is final (for it is a right adjoint), so the restriction $T_{k}F$
is a left Kan extension of $F\vert\cal A^{k}$. By convention, we
will write $T_{-1}F$ for any functor which carries all objects of
$\cal A$ to an initial object of $\cal C$. 

A Taylor cotower $\bb T\pr F$ of $F$ is said to be\textbf{ convergent}
if the functor $\bb Z_{\geq0}\star\{\infty\}\to\Fun\pr{\cal A,\cal C}$
adjoint to $\bb T\pr F$ is a colimit diagram.
\end{defn}

\begin{example}
\label{exa:taylor}Let $M$ be a manifold, let $\cal C$ be an $\infty$-category
with small colimits, and let $F:\Open\pr M\to\cal C$ be a functor.
A \textbf{Taylor cotower of $F$ }is a Taylor cotower of $F$ with
respect to the subcategories $\{\Disj^{\leq i}\pr M\}_{i\geq0}$.
If $F$ is an isotopy functor, then for each $k\in\bb Z$, the $k$th
level $T_{k}F$ of a Taylor cotower of $F$ is polynomial of degree
$\leq k$ (Theorem \ref{thm:best_approx_exists}).
\end{example}

\begin{warning}
Despite the name, the constituents of Taylor cotowers of \textit{non-}isotopy
functors may \textit{fail} to be polynomial. For example, consider
the functor $F:\Open\pr{\bb R}\to\Set$ defined by $F\pr U=U\times U$.
It is easy to check that $T_{1}F$ is not excisive.
\end{warning}

The proposition below gives the characterization of convergent functors.
\begin{prop}
\label{prop:taylor}Let $\cal A$ be an $\infty$-category, and let
$\cal A_{0}\subset\cal A_{1}\subset\cdots$ be a sequence of full
subcategories of $\cal A$. Set $\cal A_{\infty}=\bigcup_{i\geq0}\cal A_{i}$.
Let $\cal C$ be an $\infty$-category with small colimits, and let
$F:\cal A\to\cal C$ be a functor. The following conditions are equivalent:
\begin{enumerate}
\item The functor $F$ has a convergent Taylor cotower.
\item The functor $F$ is a left Kan extension of $F\vert\cal A_{\infty}$.
\end{enumerate}
\end{prop}

\begin{proof}
We will use the transitivity of Kan extensions \cite[Proposition 4.3.2.8]{HTT}.
Let $T=\bb T\pr F$ be a Taylor cotower of $F$. Since colimits in
functor categories can be formed pointwise \cite[\href{https://kerodon.net/tag/02X9}{Tag 02X9}]{kerodon},
an easy finality argument shows that condition (1) is equivalent to
the following condition:
\begin{itemize}
\item [(1$'$)]$T$ is a left Kan extension of $T\vert\bb Z_{\geq0}\times\cal A$. 
\end{itemize}
Let $\cal X\subset\pr{\bb Z_{\geq0}\star\{\infty\}}\times\cal A$
denote the full subcategory spanned by the objects in $\bb Z_{\geq0}\times\cal A$
and the objects in $\{\infty\}\times\cal A_{\infty}$. Since every
object $A\in\cal A_{\infty}$ belongs to some $\cal A_{i}$, the functor
$T\vert\cal X$ is a left Kan extension of $T\vert\bb Z_{\geq0}\times\cal A$.
Therefore, condition (1$'$) is equivalent to the following condition:
\begin{itemize}
\item [(1$''$)]$T$ is a left Kan extension of $T\vert\cal X$.
\end{itemize}
On the other hand, $T\vert\cal X$ is a left Kan extension of $T\vert\pr{\bb Z_{\geq0}\star\{\infty\}}\times\cal A_{\infty}$,
so (1$''$) is equivalent to the following condition:
\begin{itemize}
\item [(1$'''$)]$T$ is a left Kan extension of $T\vert\pr{\bb Z_{\geq0}\star\{\infty\}}\times\cal A_{\infty}$.
\end{itemize}
Since $T\vert\bb Z_{\geq0}\times\cal A$ is a left Kan extension of
$T\vert\bb Z_{\geq0}\times\cal A_{\infty}$, condition (1$'''$) is
equivalent to condition (2), and we are done.
\end{proof}

\begin{cor}
Let $M$ be a manifold, and let $\cal C$ be an $\infty$-category
with small colimits. An isotopy functor $F:\Open\pr M\to\cal C$ has
a convergent Taylor cotower if and only if $F$ satisfies the equivalent
conditions of Theorem \ref{thm:Weiss_cosheaf}.
\end{cor}

\begin{proof}
This follows from Proposition \ref{prop:taylor} and Theorem \ref{thm:Weiss_cosheaf}.
\end{proof}

When we can talk about connectivity of morphisms, we can give a better
convergence result. We will focus on one such instance, where the
target $\infty$-category is a stable $\infty$-category with t-structures.
(See Section \ref{subsec:t-st} for a brief review of t-structures.)
The following definition follows \cite[Theorem 2.3]{GW99}. 
\begin{defn}
\label{def:analyticity}Let $M$ be a smooth manifold, let $\cal C$
be a stable $\infty$-category with $t$-structures, let $F:\Open\pr M^{\op}\to\cal C$
be an exhaustive isotopy functor, and let $\rho$ and $c$ be integers.
We say that $G$ is \textbf{$\rho$-analytic with excess $c$} if
it satisfies the following condition:
\begin{itemize}
\item [($\ast$)]Let $P\subset M$ be a smooth compact submanifold with
boundary, and let $Q_{0},\dots,Q_{r}\subset M\setminus\Int\pr P$
be pairwise disjoint, compact, codimension $0$ smooth submanifolds
with boundary. Suppose that each $Q_{i}$ has handle index $q_{i}<\rho$
(i.e., it is obtained from $\partial Q_{i}$ by attaching handles
of indices $\leq q_{i}$). Then the $\pr{r+1}$-cube 
\[
\cal P\pr{\{0,\dots,r\}}^{\op}\to\cal C,\,S\mapsto F\pr{\Int\pr{P\cup\bigcup_{s\in S}Q_{s}}}
\]
is $\pr{c+\sum^{r}_{i=0}\pr{\rho-q_{i}}}$-cartesian.
\end{itemize}
\end{defn}

The following result should be compared with \cite[Theorem 2.3]{GW99}.
It also generalizes \cite[Theorem E.5]{RW14}.
\begin{thm}
\label{thm:analyticity}Let $\cal C$ be a stable $\infty$-category
with a t-structure, let $M$ be a smooth manifold, let $\rho$ and
$c$ be integers, and let $F:\Open\pr M^{\op}\to\cal C$ be an exhaustive
isotopy functor. Suppose that the following conditions are satisfied:
\begin{enumerate}
\item $F$ is $\rho$-analytic with excess $c$.
\item $\cal C$ has small limits.
\item The functor $\pi_{0}:\cal C\to\cal C^{\heartsuit}$ preserves countable
products.
\end{enumerate}
Then for every $k\geq1$ and every open set $W\subset M$ admitting
a proper Morse function whose critical points have indices less than
$q$, the map
\[
\eta_{k-1,W}:F\pr W\to T_{k-1}F\pr W
\]
is $\pr{c+k\pr{\rho-q}-1}$-connected.\footnote{This is slightly weaker than the estimate of \cite{GW99}, where it
says $c+k\pr{\rho-q}$ instead of $c+k\pr{\rho-q}-1$. However, a
close inspection of the proof of loc. cit. shows that it only gives
the same estimate as ours. Michael Weiss informed me that this issue
can probably be fixed, but the details are not written down yet. We
believe that his technique will be applicable to our setting too,
so we should be able to say that $\eta_{k-1,W}$ is $c+k\pr{\rho-q}$-connected.} In particular, if $\rho>q$ and $\bigcap_{n}\cal C_{\geq n}=0$,
then the map
\[
F\pr W\to\lim_{k}T_{k}F\pr W
\]
is an equivalence.
\end{thm}

\begin{proof}
We follow \cite[Theorem 2.3]{GW99}. Shifting degrees if necessary,
we may assume that $c=0$. Suppose first that $\overline{W}$ is a
compact smooth submanifold with boundary of $M$ and admits a handle
decomposition with handles of indices at most $q$. We will show that
$\eta_{k-1,W}$ is $k\pr{\rho-q}$-connected by induction on $q$
and the number of $q$-handles.

If $q=0$, then $W$ is the disjoint union of open balls, say $W_{1},\dots,W_{l}$.
If $l\leq k-1$, then $\eta_{k-1,W}$ is an equivalence, and we are
done. If $l\geq k$, it suffices to show that for each $k\leq t\leq l$,
the map
\[
\theta_{t}:T_{t}F\pr W\to T_{t-1}F\pr W
\]
is $t\rho$-connected. For this, for each subset $S\subset\pi_{0}\pr W$,
let $W_{S}\subset W$ denote the union of the elements in $S$. Consider
the following commutative diagram
\[\begin{tikzcd}
	{\coprod_{S\subset \pi_0(W)}\mathcal{P}_0(S)} & {\coprod_{S\subset\pi_0(W),\,|S|=t}\operatorname{Disj}^{\leq t-1}(W_S)} & {\operatorname{Disj}^{\leq t-1}(W)} \\
	{\coprod_{S\subset \pi_0(W)}\mathcal{P}(S)} & {\coprod_{S\subset\pi _0(W),\,|S|=t}\operatorname{Disj}^{\leq t}(W_S)} & {\operatorname{Disj}^{\leq t}(W),}
	\arrow["{\phi_0}"', from=1-1, to=1-2]
	\arrow[from=1-1, to=2-1]
	\arrow[from=1-2, to=1-3]
	\arrow[from=1-2, to=2-2]
	\arrow[from=1-3, to=2-3]
	\arrow["\phi"', from=2-1, to=2-2]
	\arrow[from=2-2, to=2-3]
\end{tikzcd}\]where the map $\phi\vert\cal P\pr S$ is given by $R\mapsto W_{R}$.
The right-hand square is a pushout in $\Cat_{\infty}$ (because the
Joyal model structure is left proper), and the maps $\phi_{0}$ and
$\phi$ are final by inspection. It follows from Remark \ref{rem:dec_lim}
that $\theta_{t}$ is a pullback of the map
\[
\prod_{S\subset\pi_{0}\pr W}G\pr{W_{S}}\to\prod_{S\subset\pi_{0}\pr W}\lim_{R\in\cal P_{0}\pr S}G\pr{W_{R}},
\]
which is $t\rho$-connected by the analyticity assumption (applied
to $P=\emptyset$ and $Q_{i}=\overline{W_{i}}$). Hence $\theta_{t}$
is $t\rho$-connected, as required.

For the inductive step, suppose we can write $\overline{W}=N\cup H$,
where $H\cong D^{q}\times D^{n-q}$ is a $q$-handle and $N$ has
one less $q$-handle than $\overline{W}$. Choose disjoint closed
disks $C_{0},\dots,C_{k}\subset\Int D^{q}$, and set $A_{i}=C_{i}\times D^{n-q}$.
By the induction hypothesis, for each $S\in\cal P_{0}\pr{\{0,\dots,k\}}$,
the map
\[
G\pr{W\setminus\bigcup_{i\in S}A_{i}}\to T_{k-1}\pr{W\setminus\bigcup_{i\in S}A_{i}}
\]
is $k\pr{\rho-q+1}$-connected. It follows from Corollary \ref{cor:GW99_1.22}
that the map
\[
\lim_{S\in\cal P_{0}\pr S}G\pr{W\setminus\bigcup_{i\in S}A_{i}}\to\lim_{S\in\cal P_{0}\pr S}T_{k-1}\pr{W\setminus\bigcup_{i\in S}A_{i}}\simeq T_{k-1}\pr W
\]
is $k\pr{\rho-q}$-connected. Moreover, the cube $S\mapsto G\pr{W\setminus\bigcup_{i\in S}A_{i}}$
is $k\pr{\rho-q}$-cartesian by the analyticity assumption. Hence
the map $G\pr W\to T_{k-1}\pr W$ is $k\pr{\rho-q}$-connected (Proposition
\ref{prop:1.5}), completing the induction.

The general case and the last claim are taken care of by the Milnor
exact sequence (Lemma \ref{lem:Milnor}; note that conditions (2)
and (3) ensure that the hypotheses of the lemma are satisfied), and
we are done.
\end{proof}

\section{\label{sec:homog}Classification of Homogeneous Functors}

In this section, we classify homogeneous functors, the building blocks
of Taylor towers in manifold calculus. More formally, let $k\geq0$,
let $M$ be a manifold, let $\cal C$ be a pointed $\infty$-category
with small colimits, and let $F:\Open\pr M\to\cal C$ be a functor
which is polynomial of degree $\leq k$. We say that $F$ is \textbf{homogeneous}
of degree $k$ if $F$ evaluates to a zero object at each element
$U\in\Disj\pr M$ with fewer than $k$ components; or equivalently,
if the $\pr{k-1}$th level $T_{k-1}F$ of a Taylor cotower of $F$
(Example \ref{exa:taylor}) carries every object to a zero object.
For example, every functor of polynomial of degree $\leq0$ is homogeneous
of degree $0$; a $1$-excisive exhaustive isotopy functor is homogeneous
of degree $1$ if and only if it is a cosheaf (by Theorem \ref{thm:Weiss_k-cosheaf}
and Example \ref{exa:Weiss_k-cosheaf}).

We are interested in homogeneous functors for the following reason:
Suppose that $F$ is an isotopy functor. A cofiber of $T_{k-1}F\to T_{k}F$,
called a \textbf{$k$th homogeneous layer} of $F$, is then an isotopy
functor which is homogeneous of degree $\leq k$: It is an isotopy
functor by Lemma \ref{lem:istp_Kan}; it is polynomial of degree $\leq k$
by Theorem \ref{thm:Weiss_k-cosheaf}; homogeneity is clear. Therefore,
if $F$ has a convergent Taylor cotower, then we can reduce the analysis
of $F$ to those of the homogeneous layers of $F$, at least up to
extension problems.

In this section, we prove that homogeneous functors are classified
by unordered configuration spaces and ``objects of sections trivial
near the fat diagonal'' (Corollary \ref{cor:homog_class} and Theorem
\ref{thm:inv}). To illustrate the utility of the classification,
we also give a connectivity estimate of these approximations (Corollary
\ref{cor:RW14}).
\begin{notation}
Let $M$ be a manifold, and let $\cal C$ be a pointed $\infty$-category
with small colimits. We let $\Homog^{k}_{\istp,\exh}\pr{M;\cal C}$
denote the full subcategory of $\Fun\pr{\Open\pr M,\cal C}$ spanned
by the exhaustive isotopy functors that are homogeneous of degree
$k$.
\end{notation}

Recall that if $M$ is an $n$-manifold, then $I^{=k}_{M}$ denotes
the (non-full) subcategory of $\Disj\pr M$ spanned by the isotopy
equivalences between the elements of $\Disj\pr M$ with exactly $k$
components (Notation \ref{nota:B'_k}). The poset $I^{=k}_{M}$ classifies
homogeneous functors in the following sense:
\begin{prop}
\label{prop:homog}Let $M$ be a manifold, let $\cal C$ be a pointed
$\infty$-category with small colimits, and let $k\geq0$. The inclusion
$I^{=k}_{M}\subset\Open\pr M$ induces a trivial fibration
\[
\theta:\Homog^{k}_{\istp,\exh}\pr{M;\cal C}\xrightarrow{\simeq}\Fun_{\istp}\pr{I^{=k}_{M},\cal C}.
\]
\end{prop}

\begin{proof}
By Theorem \ref{thm:best_approx_exists}, the restriction functor
\[
p:\Exc^{k}_{\istp,\exh}\pr{M;\cal C}\to\Fun_{\istp}\pr{\Disj^{\leq k}\pr M,\cal C}
\]
is a trivial fibration. Let $\Fun'_{\istp}\pr{\Disj^{\leq k}\pr M,\cal C}\subset\Fun_{\istp}\pr{\Disj^{\leq k}\pr M,\cal C}$
denote the full subcategory spanned by the functors which carry every
object in $\Disj^{\leq k-1}\pr M$ to a zero object. The functor
\[
\Homog^{k}_{\istp,\exh}\pr{M;\cal C}\to\Fun'_{\istp}\pr{\Disj^{\leq k}\pr M,\cal C}
\]
is a pullback of $p$, so it is a trivial fibration. On the other
hand, Proposition \ref{prop:zero_extension} shows that the functor
\[
\Fun'_{\istp}\pr{\Disj^{\leq k}\pr M,\cal C}\to\Fun_{\istp}\pr{I^{=k}_{M},\cal C}
\]
is a trivial fibration. Therefore, $\theta$ is a composition of trivial
fibrations and hence is itself a trivial fibration.
\end{proof}

Now if $M$ is a manifold and $k\geq0$, then a functor on $I^{=k}_{M}$
is an isotopy functor if and only if it maps every morphism of $I^{=k}_{M}$
to an equivalence. Therefore, Proposition \ref{prop:homog} says that
homogeneous functors of degree $k$ on a manifold $M$ can be classified
in terms of the classifying space of $I^{=k}_{M}$, i.e., a localization
of $I^{=k}_{M}$ with respect to all morphisms \cite[\href{https://kerodon.net/tag/01MY}{Tag 01MY}]{kerodon}.
The following proposition identifies the homotopy type of this classifying
space.
\begin{prop}
\label{prop:I_M=00003Dk_htpytype}Let $M$ be a manifold. For each
$k\geq0$, there is a weak homotopy equivalence
\[
I^{=k}_{M}\xrightarrow{\simeq}\Sing B_{k}\pr M
\]
which carries an object $U\in I^{=k}_{M}$ to a point $\{p_{1},\dots,p_{k}\}\in B_{k}\pr M$
which intersects every component of $U$.
\end{prop}

\begin{rem}
The identification of the homotopy type of $I^{=k}_{M}$ dates back
at least to \cite[Lemma 3.5]{Weiss99} and has been reproved again
and again (e.g., \cite[Lemma 2.12]{FHTM}). We include a proof for
later reference.
\end{rem}

\begin{proof}
We consider the maps
\[
I^{=k}_{M}\xleftarrow{\pi}\int^{U\in I^{=k}_{M}}\Sing B'_{k}\pr U\xrightarrow{\phi}\Sing B_{k}\pr M,
\]
where $B'_{k}\pr -$ is defined in Notation \ref{nota:B'_k}. The
map $\pi$ is a trivial fibration because it is a left fibration with
contractible fibers. So it has a section $\sigma$, and $\phi\circ\sigma$
carries each element $U\in I^{=k}_{M}$ to a point in $B'_{k}\pr U$.
It will therefore suffice to show that $\phi$ is a weak homotopy
equivalence. According to (the argument of) \cite[Corolary 3.3.4.6]{HTT},
it will suffice to show that the map 
\[
\hocolim_{U\in I^{=k}_{M}}\Sing B'_{k}\pr U\to\Sing B_{k}\pr M
\]
is a weak homotopy equivalence. This follows from \cite[Theorem A.3.1]{HA}.
\end{proof}

Combining Propositions \ref{prop:homog} and \ref{prop:I_M=00003Dk_htpytype},
we obtain:
\begin{cor}
\label{cor:homog_class}Let $M$ be a manifold, let $\cal C$ be a
pointed $\infty$-category with small colimits, and let $k\geq0$.
There is a zig-zag of categorical equivalences
\[
\Homog^{k}_{\istp,\exh}\pr{M;\cal C}\xrightarrow{\simeq}\Fun_{\istp}\pr{I^{=k}_{M},\cal C}\xleftarrow{\simeq}\Fun\pr{\Sing B_{k}\pr M,\cal C}.
\]
\end{cor}

\begin{rem}
\label{rem:interpretation}Propositions \ref{prop:homog} and \ref{prop:I_M=00003Dk_htpytype}
lead to an interesting observation if we apply it to homogeneous layers
of isotopy functors.\footnote{The author learned this interpretation from \cite{Munson10}.}
Let $n\geq0$, let $k\ge1$, let $M$ be an $n$-manifold, let $\cal C$
be a pointed $\infty$-category with small colimits, and let $F:\Open\pr M\to\cal C$
be an isotopy functor. Let $L_{k}F$ denote a cofiber of the natural
transformation $T_{k-1}F\to T_{k}F$ and let $\overline{L}_{k}F:\Sing B_{k}\pr M\to\cal C$
denote the corresponding functor. Let us compute the value of $\overline{L}_{k}F$
at a point$\{p_{1},\dots,p_{k}\}\in B_{k}M$. First, choose pairwise
disjoint open sets $B_{1},\dots,B_{k}\subset M$ that are homeomorphic
to $\bb R^{n}$ and such that $p_{i}\in B_{i}$, and set $U=B_{1}\cup\dots\cup B_{k}$.
Then 
\begin{align*}
\overline{L}_{k}F\pr{\{p_{1},\dots,p_{k}\}} & \simeq L_{k}F\pr U\\
 & \simeq\cofib\pr{T_{k-1}F\pr U\to T_{k}F\pr U}.
\end{align*}
Since $T_{k-1}F$ is $\pr{k-1}$-excisive, the map $T_{k-1}F\pr U\to T_{k}F\pr U$
can be identified with the map
\[
\theta:\colim_{S\subsetneq\{1,\dots,k\}}F\pr{\bigcup_{i\in S}B_{i}}\to F\pr{\bigcup^{k}_{i=1}B_{i}}.
\]
A cofiber of $\theta$ is called a \textbf{total cofiber} of the $k$-dimensional
cubical diagarm $S\mapsto F\pr{\bigcup_{i\in S}B_{i}}$. Total cofibers
admit an interpretation in terms of a derivative: For example if $k=2$,
then the total homotopy cofiber can be written as
\[
\cofib\pr{\cofib\pr{F\pr{\emptyset}\to F\pr{B_{1}}}\to\opn{cofib}\pr{F\pr{B_{2}}\to F\pr{B_{1}\cup B_{2}}}}
\]
which resembles the classical formula
\[
f''\pr 0=\lim_{h_{1},h_{2}\to0}\frac{f\pr{h_{1}+h_{2}}-f\pr{h_{1}}-f\pr{h_{2}}+f\pr 0}{h_{1}h_{2}}
\]
of the second derivative. Therefore, the values of $\overline{L}_{k}F$
are these ``$k$th derivatives'' of $F$, highlighting the connection
with the Taylor expansion in ordinary calculus.
\end{rem}

In the situation of Corollary \ref{cor:homog_class}, it is relatively
easy to pass from $\Homog^{k}_{\istp,\exh}\pr{M;\cal C}$ to $\Fun\pr{\Sing B_{k}\pr M,\cal C}$:
We just have to restrict a homogenous functor to $I^{=k}_{M}$, and
then pass to the localization. We now explain how to go in the reverse
direction (Theorem \ref{thm:inv}).

The starting point is the observation that for a manifold $M$, the
equivalence $\Fun\pr{\Sing B_{k}\pr M,\cal C}\xrightarrow{\simeq}\Fun_{\istp}\pr{I^{=k}_{M},\cal C}$
coming from Proposition \ref{prop:I_M=00003Dk_htpytype} is given
by 
\[
F\mapsto\pr{U\mapsto\colim_{\Sing B'_{k}\pr U}F}.
\]
(This follows from the proof of Proposition \ref{prop:I_M=00003Dk_htpytype}.
Also, we used the informal notation of Remark \ref{rem:02ZM}, applied
to the cocartesian fibration $\int^{U\in I^{=k}_{M}}\Sing B'_{k}\pr U$).
A bit more consideration then leads to the following proposition:
\begin{prop}
\label{prop:inverse}Let $M$ be a manifold, let $\cal C$ be a pointed
$\infty$-category with small colimits, let $k\geq0$, and let $F:\Sing B_{k}\pr M\to\cal C$
be a functor. Define another functor $\widetilde{F}:\Open\pr M\to\cal C$
by 
\[
\widetilde{F}\pr U=\colim_{\Sing B_{k}\pr U}F.
\]
Then:
\begin{enumerate}
\item $\widetilde{F}$ is $k$-excisive, exhaustive, and isotopy invariant.
\item The assignment $F\mapsto L_{k}\widetilde{F}$ determines an inverse
equivalence
\[
\Fun\pr{\Sing B_{k}\pr M,\cal C}\xrightarrow{\simeq}\Homog^{k}_{\istp,\exh}\pr{M;\cal C}
\]
of the equivalence of Proposition \ref{prop:homog}.
\end{enumerate}
\end{prop}

We need a few lemmas for the proof of Proposition \ref{prop:inverse}.
\begin{notation}
\label{nota:Bdubprime}Let $M$ be a manifold and let $k\geq0$. We
write $B''_{k}\pr M$ for the complement of $B'_{k}\pr M$ in $B_{k}\pr M$.
That is, $B''_{k}\pr M$ is the subspace of $B_{k}\pr M$ consisting
of the points $\{p_{1},\dots,p_{k}\}$ such that the map $\{p_{1},\dots,p_{k}\}\to\pi_{0}\pr M$
is \textit{not} injective.
\end{notation}

\begin{lem}
\label{lem:homog_inv}Let $U$ be a manifold whose components are
homeomorphic to $\bb R^{n}$, and let $k\geq1$. The map
\[
\int^{V\in\Disj^{\leq k-1}\pr U}\Sing B_{k}\pr V\to\Sing\pr{B''_{k}\pr U}
\]
is a weak homotopy equivalence.
\end{lem}

\begin{proof}
By \cite[Lemma 3.3.4.1 and Proposition 3.3.4.2]{HTT}, it suffices
to show that the diagram $\{\Sing B_{k}\pr V\to\Sing B''_{k}\pr U\}_{V\in\Disj^{\leq k-1}\pr U}$
is a homotopy colimit diagram of simplicial sets. According to \cite[Theorem A.3.1]{HTT},
it suffices to show that, for each point $S\in B''_{k}\pr U$, the
full subcategory $\Disj^{\leq k-1}\pr U_{S}\subset\Disj^{\leq k-1}\pr U$
spanned by the objects $V\in\Disj^{\leq k-1}\pr U$ such that $S\subset V$
is weakly contractible. Since $S$ lies in no more than $\pr{k-1}$
components of $U$, such a $V$ necessarily lies in the union of the
components of $U$ containing points in $S$. Thus $\Disj^{\leq k-1}\pr U_{S}$
has a final object, and is in particular weakly contractible.
\end{proof}

\begin{proof}
[Proof of Proposition \ref{prop:inverse}]The claim is trivial if
$k=0$, so we will assume that $k\geq1$ throughout.

We start with (1). By Proposition \ref{prop:dec_colim}, it suffices
to show that the functor
\[
\Sing B_{k}\pr -:\Open\pr M\to\cal S
\]
is $k$-excisive, exhaustive, and isotopy invariant. The first two
follows from \cite[Theorem A.3.1]{HA}, while isotopy invariance is
obvious.\footnote{Incidentally, $B_{k}\pr -$ is one of the ``classic'' polynomial
functors that appeared in the original paper of Weiss; see \cite[Example 2.4]{Weiss99}.} 

For (2), let $U\in I^{=k}_{M}$ and $F\in\Fun\pr{\Sing B_{k}\pr M}$.
Since $B_{k}\pr U$ is the disjoint union of $B'_{k}\pr U$ and $B''_{k}\pr U$,
we have an equivalence
\[
\pr{\colim_{\Sing B'_{k}\pr U}F}\vee\pr{\colim_{\Sing B''_{k}\pr U}F}\xrightarrow{\simeq}F\pr U
\]
natural in $F$ and $U$. (But beware that the summands on the right-hand
side are not functors of $U\in\Open\pr M$, because an inclusion of
open sets of $M$ may fail to be injective on $\pi_{0}$.) Moreover,
Lemma \ref{lem:homog_inv} give us equivalences
\begin{align*}
T_{k-1}\widetilde{F}\pr U & =\colim_{V\in\Disj^{\leq k-1}\pr U}\colim_{\Sing B_{k}\pr V}F\\
 & \simeq\colim_{\int^{V\in\Disj^{\leq k-1}\pr U}\Sing B_{k}\pr V}F\pr W\\
 & \simeq\colim_{\Sing\pr{B''_{k}\pr U}}F.
\end{align*}
Under these equivalences, the map $T_{k-1}\widetilde{F}\pr U\to\widetilde{F}\pr U$
can be identified with the inclusion of the summand
\[
\pr{\colim_{\Sing B''_{k}\pr U}F}\to\pr{\colim_{\Sing B'_{k}\pr U}F}\vee\pr{\colim_{\Sing B''_{k}\pr U}F}.
\]
Hence we obtain equivalences
\[
L_{k}\widetilde{F}\pr U\simeq\cofib\pr{T_{k-1}\widetilde{F}\pr U\to\widetilde{F}\pr U}\simeq\colim_{\Sing B'_{k}\pr U}F
\]
natural in $F$ and $U$. As we observed right before stating Proposition
\ref{prop:inverse}, the right-hand side computes the desired inverse,
so we are done.
\end{proof}

Proposition \ref{prop:inverse} has a refinement when there is a good
supply of open sets of $B_{k}\pr M$ of configurations where two or
more points are ``close.'' To explain this in more detail, we need
a few preliminaries. 
\begin{warning}
Following Warning \ref{warn:op}, we will focus on manifold calculus
of \textit{presheaves} for the remainder of this section. If $M$
is a manifold and $\cal C$ is a pointed $\infty$-category with small
limits, we write $\Homog^{k}_{\istp,\exh}\pr{M^{\op};\cal C}\subset\Fun\pr{\Open\pr M^{\op},\cal C}$
for the full subcategory spanned by the functors that are isotopy
invariant, exhaustive, and homogeneous of degree $k$ when viewed
as a functor $\Open\pr M^{\op}\to\cal C$. 
\end{warning}

\begin{notation}
\label{nota:gamma}Let $\cal C$ be an $\infty$-category with small
limits, let $X$ be a topological space, and let $F:\Sing X\to\cal C$
be a functor. For each subset $A\subset X$, we define $\Gamma\pr{A;F}=\lim_{\Sing A}F.$ 
\end{notation}

\begin{rem}
Notation \ref{nota:gamma} is motivated by the following observation:
In the situation of Notation \ref{nota:gamma}, suppose that $\cal C=\cal S$.
By the straightening--unstraightening equivalence, $F$ corresponds
to a Kan fibration $p:E\to\Sing X$. The naturality of the straightening--unstraightening
correspondence gives an equivalence between $\Gamma\pr{A;F}$ and
the Kan complex $\Fun_{\Sing X}\pr{\Sing A,E}$ of sections of $p$
over $\Sing A$.

We can also interpret $\lim_{\Sing X}F$ as a ``twisted cochain.''
To see this, consider the functors
\[
\Del\xleftarrow{\pi}\Del_{/\Sing X}\xrightarrow{\fin}\Sing X,
\]
where $\fin$ denotes the final vertex functor (dual of \cite[Lemma 3.3.6]{landoo-cat}).
The functor $\fin$ is initial \cite[Theorem 3.3.8]{landoo-cat},
so we have equivalences
\begin{align*}
\lim_{\Sing X}F & \simeq\lim_{\Del_{/\Sing X}}\pr{F\circ\fin}\\
 & \simeq\lim_{\Del}\Ran_{\pi}\pr{F\circ\fin}\\
 & \simeq\lim_{\Del}\pr{[n]\mapsto\prod_{\sigma:\Delta^{n}\to X\in\pr{\Sing X}_{n}}F\pr{\sigma\pr n}}.
\end{align*}
Here the last equivalence is an instance of Lemma \ref{lem:02ZM}.
When $\cal C$ is the $\infty$-category of cochain complexes of abelian
groups, the limit of a cosimplicial cochain complex is computed as
the totalization of the associated bicomplex \cite[Remark 1.20, Example 2.4]{A23c}.
If further $F$ takes values in the category of abelian groups, the
resulting complex is nothing but the singular complex of $X$ with
local coefficients $F$ \cite[VI.2]{WhiteheadAT}.
\end{rem}

\begin{defn}
\label{def:fd}Let $M$ be a manifold, and let $k\geq0$. We define
the\textbf{ $k$-fold fat diagonal} of $M$ by
\[
\blacktriangle_{k}\pr M=\SP^{k}\pr M\setminus B_{k}\pr M,
\]
where $\SP^{k}\pr M=X^{k}/\Sigma_{k}$ denotes the $k$th symmetric
product of $X$. We write $\Nbd\pr{\blacktriangle_{k}\pr M}$ for
the poset of neighborhoods of $\blacktriangle_{k}\pr M$ in $\SP^{k}\pr M$. 

If $M$ is triangulated and its vertices are well-ordered, $\SP^{k}\pr M$
has a preferred triangulation containing $\blacktriangle_{k}\pr M$
as its subcomplex (Example \ref{exa:triangulation}). In this situation,
we write $\Nbd'\pr{\blacktriangle_{k}\pr M}$ for the full subposet
of $\Nbd\pr{\blacktriangle_{k}\pr M}$ consisting of the (underlying
polyhedra of) derived neighborhoods of $\blacktriangle_{k}\pr M$
in $\SP^{k}\pr M$.
\end{defn}

\begin{defn}
\label{def:gammac}Let $\cal C$ be a pointed $\infty$-category with
small limits, let $M$ be a manifold that admits a PL structure, and
let $F:\Sing B_{k}\pr M\to\cal C$ be a functor. We define a functor
$\Gamma'_{\blacktriangle}\pr{B_{k}\pr -;F}:\Open\pr M^{\op}\to\cal C$
by
\[
\Gamma'_{\blacktriangle}\pr{B_{k}\pr U;F}=\colim_{Q\in\Nbd\pr{\blacktriangle_{k}\pr U}^{\op}}\Gamma\pr{Q\cap B_{k}\pr U;F}.
\]

Since $\Nbd\pr{\blacktriangle_{k}\pr U}^{\op}$ is weakly contractible
(as it is filtered), there is a natural transformation 
\[
\Gamma\pr{B_{k}\pr -;F}\simeq\colim_{Q\in\Nbd\pr{\blacktriangle_{k}\pr -}^{\op}}\Gamma\pr{Q\cap B_{k}\pr -;F}\to\Gamma'_{\blacktriangle}\pr{B_{k}\pr -;F}
\]
We write $\Gamma_{\blacktriangle}\pr{B_{k}\pr -;F}$ for the fiber
of this map.
\end{defn}

\begin{rem}
\label{rem:section_well-defined}In the situation of Definition \ref{def:gammac},
the colimit defining $\Gamma'_{\blacktriangle}\pr{B_{k}\pr -;F}$
exists. To see this, fix a triangulation of $U$. Since the inclusion
$\Nbd'\pr{\blacktriangle_{k}\pr U}\to\Nbd\pr{\blacktriangle_{k}\pr U}$
is initial (Lemma \ref{lem:small_reg_nbhd}), we only need to take
the colimit over $\Nbd'\pr{\blacktriangle_{k}\pr U}$. Since $\Nbd'\pr{\blacktriangle_{k}\pr U}$
is weakly contractible (being cofiltered by Lemma \ref{lem:small_reg_nbhd})
and every inclusion $Q\to Q'$ in $\Nbd'\pr{\blacktriangle_{k}\pr U}$
induces a homotopy equivalence $Q\cap B_{k}\pr U\xrightarrow{\simeq}Q'\cap B_{k}\pr U$,
the colimit over $\Nbd'\pr{\blacktriangle_{k}\pr U}$ indeed exists.
Moreover, this argument shows that the map
\[
\Gamma\pr{Q\cap B_{k}\pr U;F}\to\Gamma'_{\blacktriangle}\pr{B_{k}\pr U;F}
\]
is an equivalence for every $U\in\Open\pr M$ and $Q\in\Nbd'\pr{\blacktriangle_{k}\pr U}$.
\end{rem}

We can now state the explicit formula for the inverse equivalence.
In the case where $\cal C$ is the $\infty$-category of pointed spaces,
the formula is due to Weiss, and the proof we present below should
be compared with his original argument \cite[Section 7]{Weiss99}. 
\begin{thm}
\label{thm:inv}Let $\cal C$ be a pointed $\infty$-category with
small limits. For every manifold $M$ admitting a PL structure and
every $k\geq0$, the functor
\[
\Gamma_{\blacktriangle}\pr{B_{k}\pr -;-}:\Fun\pr{\Sing B_{k}\pr M,\cal C}\to\Homog^{k}_{\istp,\exh}\pr{M^{\op};\cal C}
\]
is well-defined and an inverse of the equivalence of Proposition \ref{prop:homog}.
\end{thm}

\begin{proof}
In light of Proposition \ref{prop:inverse}, it will suffice to show
that for each functor $F:\Sing B_{k}\pr M\to\cal C$, the map $\Gamma\pr{B_{k}\pr -;F}\to\Gamma'_{\blacktriangle}\pr{B_{k}\pr -;F}$
exhibits $\Gamma'_{\blacktriangle}\pr{B_{k}\pr -;F}$ as a $\pr{k-1}$th
polynomial approximation of $\Gamma\pr{B_{k}\pr -;F}$. We will prove
this in several steps. For notational convenience, we will write $\Phi=\Gamma\pr{B_{k}\pr -;F}$
and $\Psi=\Gamma'_{\blacktriangle}\pr{B_{k}\pr -;F}$. Note that these
functors can take closed subsets of $M$ as its inputs by extending
them by the same defining formula.

\begin{enumerate}[label=(\textit{Step\arabic*}), wide, itemsep = 1em]

\item We show that, for each $U\in\Disj\pr M$, the map
\[
\theta:\Gamma\pr{B''_{k}\pr U;F}\to\colim_{Q\in\Nbd\pr{\blacktriangle^{k}\pr U}}\Gamma\pr{B_{k}\pr U\cap Q;F}
\]
is an equivalence. Identify each component of $U$ with $\bb R^{n}$
by choosing a homeomorphism. Given a decreasing continuous map $\varepsilon:[0,\infty)\to(0,1)$,
let $\widetilde{Q}_{\varepsilon}\subset U^{k}$ denote the open set
of the points $\pr{p_{1},\dots,p_{k}}\in U^{k}$ satisfying the following
conditions:
\begin{itemize}
\item The map $\{p_{1},\dots,p_{k}\}\to\pi_{0}\pr U$ is not injective.
\item For each component $U'\subset U$ containing at least one point in
$\{p_{1},\dots,p_{k}\}$, we have $\max_{p_{i},p_{j}\in U'}\abs{p_{i}-p_{j}}<\varepsilon\pr{\max_{p_{i}\in U'}\abs{p_{i}}}$. 
\end{itemize}

Let $Q_{\varepsilon}\subset\SP^{k}\pr U$ denote the image of $\widetilde{Q}_{\varepsilon}$.
Notice that $Q_{\varepsilon}$ is a neighborhood of $\blacktriangle^{k}\pr U$
in $\SP^{k}\pr U$, and moreover that every neighborhood of $\blacktriangle^{k}\pr U$
contains a neighborhood of the form $Q_{\varepsilon}$. Thus we obtain
an equivalence
\[
\colim_{\varepsilon}\Gamma\pr{Q_{\varepsilon}\cap B_{k}\pr U;F}\xrightarrow{\simeq}\colim_{Q\in\Nbd\pr{\blacktriangle^{k}\pr U}}\Gamma\pr{Q\cap B_{k}\pr U;F}.
\]
On the other hand, the inclusion $Q_{\varepsilon}\cap B_{k}\pr U\to B''_{k}\pr U$
is a homotopy equivalence. To see this, choose a homeomorphism $f:[0,\infty)\to[0,1)$.
By using, for each component $U'\subset U$ intersecting $\{p_{1},\dots,p_{k}\}$,
the map 
\[
\{p_{i}\}_{i\in U''}\mapsto\{f\pr{\max_{p_{s}\in U'}\abs{p_{s}}}\varepsilon\pr{\max_{p_{s}\in U'}\abs{p_{s}}}p_{i}\}_{i\in U''},
\]
we obtain a map $B''_{k}\pr U\to Q_{\varepsilon}\cap B_{k}\pr U$.
Using linear homotopies, one verifies that this is an inverse homotopy
equivalence of the inclusion. (The above map and the homotopies are
well-defined because $\varepsilon$ is a decreasing function with
values in $\pr{0,1}$.) So we obtain another equivalence
\[
\Gamma\pr{B''_{k}\pr U;F}\xrightarrow{\simeq}\colim_{\varepsilon}\Gamma\pr{Q_{\varepsilon}\cap B_{k}\pr U;F}.
\]
The map $\theta$ is the composite of these maps, so it is an equivalence,
as required.

\item Let $T_{k-1}\Phi$ and $T_{k-1}\Psi$ denote the right Kan
extensions of $\Phi$ and $\Psi$ along the inclusion $\Disj^{\leq k-1}\pr M^{\op}\hookrightarrow\Open\pr M^{\op}$.
We then have the following diagram:
\[\begin{tikzcd}
	\Phi & {\Psi } \\
	{T_{k-1}\Phi} & {T_{k-1}\Psi}
	\arrow[from=1-1, to=1-2]
	\arrow["{\eta_{\Phi}}"', from=1-1, to=2-1]
	\arrow["{\eta_{\Psi}}", from=1-2, to=2-2]
	\arrow["\alpha"', from=2-1, to=2-2]
\end{tikzcd}\]The map $\alpha$ is an equivalence by Step 1 (because $B''_{k}\pr U=B_{k}\pr U$
for every $U\in\Disj^{\leq k-1}\pr M$). Consequently, it will suffice
to show that $\eta_{\Psi}$ is an equivalence. We will prove this
in the next step.

\item We show that $\eta_{\Psi}$ is an equivalence. By Remark \ref{rem:MV},
it suffices to show that every open set $U\subset M$ equipped with
a triangulation coming from a PL structure of $M$ satisfies the following
conditions:

\begin{enumerate}[label=(\Roman*)]

\item If $U\in\Disj\pr M$, then the map $\eta_{\Psi,U}:\Psi\pr U\to T_{k-1}\Psi\pr U$
is an equivalence.

\item Let $K_{0}\subset K_{1}\subset\cdots\subset U$ be a sequence
of subcomplexes whose interiors cover $U$. The map 
\[
\Psi\pr U\to\lim_{i}\Psi\pr{K_{i}}
\]
is an equivalence.

\item Let $A_{0},\dots,A_{k-1}\subset U$ be pairwise disjoint compact
subcomplexes. The map
\[
\Psi\pr U\to\lim_{S\in\cal P_{0}\pr{\{0,\dots,k-1\}}}\Psi\pr{U\setminus\bigcup_{i\in S}A_{i}}
\]
is an equivalence. 

\end{enumerate}

For (I), we consider the diagram
\[\begin{tikzcd}
	{\Phi (U)} \\
	& {\Gamma(B''_k(U);F)} & {\Psi(U)} \\
	& {\lim_{V\in \operatorname{Disj}^{\leq k-1}(U)}\Gamma(B_k(V);F)} & {T_{k-1}\Psi(U).}
	\arrow[from=1-1, to=2-2]
	\arrow[curve={height=-12pt}, from=1-1, to=2-3]
	\arrow["{\eta_{\Phi,U}}"', curve={height=12pt}, from=1-1, to=3-2]
	\arrow["\simeq", from=2-2, to=2-3]
	\arrow["\psi"', from=2-2, to=2-3]
	\arrow["\simeq"', from=2-2, to=3-2]
	\arrow["\phi", from=2-2, to=3-2]
	\arrow["{\eta_{\Psi,U}}", from=2-3, to=3-3]
	\arrow["{\alpha_U}", from=3-2, to=3-3]
	\arrow["\simeq"', from=3-2, to=3-3]
\end{tikzcd}\]We saw in the proof of Proposition \ref{prop:inverse} that $\phi$
is an equivalence, and in Step 1 that $\psi$ is an equivalence. Thus
$\eta_{\Psi,U}$ is an equivalence, as claimed.

For (II), use Example \ref{exa:triangulation} and triangulate $\SP^{k}\pr U$
in such a way that it contains $\blacktriangle_{k}\pr U$ as a subcomplex.
As we observed in Remark \ref{rem:section_well-defined}, for an arbitrary
element $Q\in\Nbd'\pr{\blacktriangle_{k}\pr U}$, the map
\[
\Gamma\pr{Q\cap B_{k}\pr U;F}\to\Psi\pr U
\]
is an equivalence. A similar argument shows that the map
\[
\Gamma\pr{Q\cap B_{k}\pr{K_{i}};F}\to\Psi\pr{K_{i}}
\]
is an equivalence for every $i$, since $Q\cap\SP^{k}\pr{K_{i}}\in\Nbd'\pr{\blacktriangle_{k}\pr{K_{i}}}$
(Remark \ref{rem:dnbhd_compat}). Consequently, we are reduced to
showing that the map
\[
\Gamma\pr{Q\cap B_{k}\pr U;F}\to\lim_{i}\Gamma\pr{Q\cap B_{k}\pr{K_{i}};F}
\]
is an equivalence. For this, it suffices to show that $\Sing\pr{Q\cap B_{k}\pr U}=\bigcup_{i\geq0}\Sing\pr{Q\cap B_{k}\pr{K_{i}}}$
(by Remark \ref{rem:dec_lim} and the fact filtered colimits of simplicial
sets already compute homotopy colimits), which follows by inspection.

For (III), subdividing $U$ if necessary, we may assume that no simplex
of $U$ intersects $A_{i}$ and $A_{j}$ for distinct indices $i\neq j$.
Let $C_{i}$ denote the simplicial complement of $A_{i}$ in $U$.
For each subset $S\subset\{0,\dots,k\}$, the inclusion $U\setminus\bigcup_{i\in S}A_{i}\to C_{i}$
is a homotopy equivalence (see Remark \ref{rem:scomplement}), so
it suffices to show that the map
\[
\Psi\pr U\to\lim_{S\in\cal P_{0}\pr{\{0,\dots,k-1\}}}\Psi\pr{C_{i}}
\]
is an equivalence. This follows from an argument similar to the one
in the previous paragraph (and \cite[Theorem A.3.1]{HA}), noting
that if $Q\in\Nbd\pr{\blacktriangle_{k}\pr U}$ is sufficiently small,
we have $B_{k}\pr U\cap Q=\bigcup^{k-1}_{i=0}B_{k}\pr{C_{i}}\cap Q$\@.
\qedhere

\end{enumerate}
\end{proof}

As a corollary of Proposition \ref{prop:inverse}, we give a connectivity
estimate of polynomial approximations. When $\cal C$ is the category
of chain complexes of abelian groups or that of spectra, this is proved
in \cite[Lemma E.4]{RW14} by a rather ad-hoc argument that reduces
the claim to that of pointed spaces. Proposition \ref{prop:inverse}
allows us to get away with this reduction while keeping the essence
of the argument of \cite{RW14} intact.
\begin{cor}
\label{cor:RW14}Let $M$ be an $n$-manifold admitting a PL structure,
let $\cal C$ be a stable $\infty$-category with small limits and
a t-structure, and let $F:\Open\pr M^{\op}\to\cal C$ be an isotopy
functor. Suppose that:

\begin{enumerate}[label=(\roman*)]

\item For each $U\in\Disj\pr M$, the object $F\pr U\in\cal C$ is
$a$-connected.

\item $0$-connected objects are stable under small products.

\end{enumerate}

Then for every $k\geq0$, the values of $T_{k}F$ are $\pr{a-kn-k}$-connected.
\end{cor}

\begin{proof}
Shifting degrees if necessary, we may assume that $a=0$. We set $\lambda_{k}=-kn-k$.
The proof proceeds by induction on $k$. 

For the base step where $k=0$, $T_{0}F$ is the constant diagram
at $F\pr{\emptyset}$, so the claim is trivial. For the inductive
step, let $k\geq1$, and suppose we have proved the claim for $T_{k-1}F$.
We have a fiber sequence
\[
L_{k}F\to T_{k}F\to T_{k-1}F
\]
in $\Fun\pr{\Open\pr M^{\op},\cal C}$. By the induction hypothesis,
the values of $T_{k-1}F$ are $\lambda_{k-1}$-connected, and hence
are $\lambda_{k}$-connected. Thus, by the long exact sequence of
homotopy groups, we are reduced to showing that $L_{k}F$ is $\lambda_{k}$-connected.
We prove this in several steps.

\begin{enumerate}[label=(\textit{Step\arabic*}), wide, itemsep = 1em]

\item We show that, for each $U\in I^{=k}_{M}$, the object $\pr{L_{k}F}\pr U\in\cal C$
is $\pr{-k}$-connected. By Remark \ref{rem:interpretation} and condition
(i), $\pr{L_{k}F}\pr U$ can be written as a total fiber of a diagram
$[1]^{k}\to\cal C_{\geq1}$. Since the fiber of maps of $a$-connected
objects is $\pr{a-1}$-connected, the claim thus follows by induction
on $k$ and the inductive formula for total fibers \cite[Corollary 2.2]{AB22}.

\item We will show that the values of $L_{k}F$ are $\lambda_{k}$-connected.
Let $H:\Sing B_{k}\pr M\to\cal C$ denote the functor corresponding
to the homogeneous functor $L_{k}F$. For each $U\in\Open\pr M$,
 Proposition \ref{thm:inv} gives us an equivalence
\begin{align*}
L_{k}F\pr U & \simeq\Gamma_{\blacktriangle}\pr{U;H}\\
 & \simeq\fib\pr{\Gamma\pr{B_{k}\pr U;H}\to\Gamma\pr{B_{k}\pr U\cap Q;H}}\\
 & \simeq\fib\pr{\lim_{\Sing B_{k}\pr U}H\to\lim_{\Sing\pr{B_{k}\pr U\cap Q}}H},
\end{align*}
where $Q$ is a derived neighborhood of $\blacktriangle_{k}\pr U$
in $\SP^{k}\pr U$ with respect to some triangulation of $U$. Since
$B_{k}\pr U$ is obtained from $B_{k}\pr U\cap Q$ by attaching cells
of dimension $\leq kn$, combining Step 1 with Lemma \ref{lem:conn_lim}
(and condition (ii)), we deduce that the map
\[
\lim_{\Sing B_{k}\pr U}H\to\lim_{\Sing\pr{B_{k}\pr U\cap Q}}H
\]
is $\pr{\lambda_{k}+1}$-connected. Thus its fiber is $\lambda_{k}$-connected,
completing the induction.\qedhere

\end{enumerate}
\end{proof}

\begin{rem}
Let $M$ be a smooth manifold and let $I^{=k}_{\sm,M}$ denote the
subcategory of $\Disj^{\leq k}_{\sm}\pr M$ spanned by the smooth
isotopy equivalences between objects with exactly $k$ components.
Everything in this section remains valid if we replace $I^{=k}_{M}$
by$I^{=k}_{\sm,M}$ and $\Disj,\Disj^{\leq k}$ by $\Disj_{\sm}$
and $\Disj^{\leq k}_{\sm}$. A similar remark applies to the PL case.
\end{rem}

\section{\label{sec:context_free}Context-Free Case}

In many cases, functors subject to the analysis of manifold calculus
are restrictions of functors defined on all of $\Mfld_{n}$ or its
variants. The study of such functors is called \textbf{context-free
manifold calculus} \cite{MR3071127}. Some of the results of Section
\ref{sec:poly} generalize to the context-free case, which we record
in this section.
\begin{defn}
\label{def:ctxfree}Let $n,k\geq0$. Let $\cal C$ be an an $\infty$-category
and let $F:\Mfld_{n}\to\cal C$ be a functor.
\begin{enumerate}
\item The \textbf{Weiss $k$-topology} on $\sf{Mfld}_{n}$ is the Grothendieck
topology on $\sf{Mfld}_{n}$ such that for each object $M\in\sf{Mfld}_{n}$,
a sieve $\cal U$ on $M$ is a covering sieve if and only if for each
finite set $S\subset M$ of cardinality $\leq k$, there is an object
$\pr{U\to M}\in\cal U$ such that $S\subset U$. The intersection
of the Weiss topologies on $M$ are called the \textbf{Weiss topology}. 
\item We say that $F$ is a \textbf{Weiss $k$-cosheaf} (resp. \textbf{Weiss
cosheaf}) if the composite $\sf{Mfld}_{n}\to\Mfld_{n}\xrightarrow{F}\cal C$
is a cosheaf with respect to the Weiss $k$-topology (resp. Weiss
topology).
\item We say that $F$ is \textbf{$k$-excisive}, or \textbf{polynomial
of degree $\leq k$}, if for each $n$-manifold $M$ and each pairwise
disjoint closed sets $A_{0},\dots,A_{k}\subset M$, the map
\[
\colim_{\emptyset\neq S\subset\{0,\dots,k\}}F\pr{M\setminus\bigcup_{i\in S}A_{i}}\to F\pr M
\]
is an equivalence.
\item We say that $F$ is \textbf{exhaustive} if for each $n$-manifold
$M$ and each increasing sequence $U_{0}\subset U_{1}\subset\cdots$
of open sets of $M$ which covers $M$, the map
\[
\colim_{i}F\pr{U_{i}}\to F\pr M
\]
is an equivalence.
\item We will write $\Exc^{k}_{\exh}\pr{\Mfld_{n};\cal C}\subset\Fun\pr{\Mfld_{n},\cal C}$
for the full subcategory spanned by the $k$-excisive, exhaustive
functors $\Mfld_{n}\to\cal C$.
\item The \textbf{Taylor cotower} of $F$ is the Taylor cotower of $F$
with respect to the sequence of full subcategories $\Disk^{\leq0}_{n}\subset\Disk^{\leq1}_{n}\subset\cdots$
(Definition \ref{def:Taylor}).
\end{enumerate}
\end{defn}

\begin{rem}
In light of the constructions in non-context-free manifold calculus,
some readers find it more natural to work with functors on $\sf{Mfld}_{n}$
which carry isotopy equivalences to equivalences, rather than functors
defined on $\Mfld_{n}$. There is a reason for this: The localization
theorem for isotopy equivalences (Theorem \ref{thm:localizing_wrt_istpy}),
which formed a basis of our argument in non-context-free manifold
calculus, is significantly more subtle in the context-free case. We
will come back to this point in Appendix \ref{App:(Non)-Loc}.
\end{rem}

\begin{rem}
The Taylor cotower of $F$ coincides with the tower of Kurannich--Kupers
\cite[5.3.3]{KK24}, both in the smooth case and the topological case.
See \cite[Remark 5.8]{KK24}.
\end{rem}

The following theorems, which are the main result of this section,
are analogs of Theorems \ref{thm:best_approx_exists}, \ref{thm:Weiss_k-cosheaf}
and \ref{thm:Weiss_cosheaf} in the context-free case. 
\begin{thm}
\label{thm:context_free1}Let $n,k\geq0$, let $\cal C$ be an $\infty$-category
with small colimits, and let $F:\Mfld_{n}\to\cal C$ be a functor.
The following conditions are equivalent:
\begin{enumerate}
\item $F$ is a left Kan extension of $F\vert\Disk^{\leq k}_{n}$.
\item For each $n$-manifold $M$, each small $\infty$-category $\cal I$,
and each functor $\chi:\cal I\to N\pr{\Open\pr M}$ satisfying the
Weiss $k$-condition (Definition \ref{def:Weiss_condition}), the
map
\[
\colim_{I\in\cal I}F\pr{\chi\pr I}\to F\pr M
\]
is an equivalence.
\item $F$ is a Weiss $k$-cosheaf.
\item $F$ is $k$-excisive and exhaustive.
\end{enumerate}
In particular, the inclusion $\Exc^{k}_{\exh}\pr{\Mfld_{n};\cal C}\hookrightarrow\Fun\pr{\Mfld_{n},\cal C}$
admits a right adjoint, which carries a functor $F\in\Fun\pr{\Mfld_{n},\cal C}$
to a left Kan extension of $F\vert\Disk^{\leq k}_{n}$.
\end{thm}

\begin{thm}
\label{thm:context_free2}Let $n\geq0$, let $\cal C$ be an $\infty$-category
with small colimits, and let $F:\Mfld_{n}\to\cal C$ be a functor.
The following conditions are equivalent:
\begin{enumerate}
\item $F$ is a left Kan extension of $F\vert\Disk_{n}$.
\item For each $n$-manifold $M$, each small $\infty$-category $\cal I$,
and each functor $\chi:\cal I\to\Open\pr M$ satisfying the Weiss
condition (Definition \ref{def:Weiss_condition}), the map
\[
\colim_{I\in N\pr{\cal I}}F\pr{\chi\pr I}\to F\pr M
\]
is an equivalence.
\item $F$ is a Weiss cosheaf.
\end{enumerate}
\end{thm}

\begin{rem}
Theorems \ref{thm:context_free1} and \ref{thm:context_free2} remain
valid if we replace $\Mfld_{n}$, $\Disk_{n}$, and $\Disk^{\leq k}_{n}$
by $\Mfld_{\sm,n}$, $\Disk_{\sm,n}$, and $\Disk^{\leq k}_{\sm,n}$,
with essentially the same proof.
\end{rem}

The proofs of Theorems \ref{thm:context_free1} and \ref{thm:context_free2}
are very similar, so we will focus on Theorem \ref{thm:context_free1}.
\begin{proof}
[Proof of Theorem \ref{thm:context_free1}]Clearly (2)$\implies$(3)$\implies$(4),
so it suffices to show that (1)$\implies$(2) and that (4)$\implies$(1). 

First we show that (1)$\implies$(2). Suppose that $F$ is a left
Kan extension of $F\vert\Disk^{\leq k}_{n}$. Let $M$ be an $n$-manifold
and let $\chi:\cal I\to\Open\pr M$ be a functor of small categories
satisfying the Weiss $k$-condition. We wish to show that the map
\[
\colim_{I\in\cal I}F\pr{\chi\pr I}\to F\pr M
\]
is an equivalence. In light of Theorem \ref{thm:Weiss_k-cosheaf},
it suffices to show that the composite
\[
\Open\pr M\to\Mfld_{n}\xrightarrow{F}\cal C
\]
is a left Kan extension of $\Disj^{\leq k}\pr M$. Since $F$ is a
left Kan extension of $F\vert\Disk^{\leq k}_{n}$, it suffices to
show that, for each open set $U\subset M$, the map
\[
\Disj^{\leq k}\pr M_{/U}\to\Disk^{\leq k}_{n/U}
\]
is final. This follows from Lemma \ref{lem:final_slice_mfd} (or from
Theorem \ref{thm:localizing_wrt_istpy}, for localization functors
are final \cite[\href{https://kerodon.net/tag/02N9}{Tag 02N9}]{kerodon}). 

Next we show that (4)$\implies$(1). Let $G$ denote a left Kan extension
of $F\vert\Disk^{\leq k}_{n}$. We wish to show that the induced natural
transformation $\alpha:G\to F$ is a natural equivalence. It suffices
to show that, for each $n$-manifold $M$, the map $\alpha\iota:G\iota\to F\iota$
is a natural equivalence, where $\iota:\Open\pr M\to\Mfld_{n}$ denotes
the inclusion. By Theorem \ref{thm:Weiss_k-cosheaf}, the functor
$F\iota$ is a left Kan extension of its restriction to $\Disj^{\leq k}\pr M$.
As we saw in the previous paragraph, the functor $G\iota$ is also
left Kan extensions of its restriction to $\Disj^{\leq k}\pr M$.
Since the components of $\alpha\iota$ at each object in $\Disj^{\leq k}\pr M$
are equivalences, we deduce that $\alpha\iota$ must be an equivalence.
The proof is now complete.
\end{proof}

As a corollary of the theorems, we obtain the following structural
result on Taylor cotowers.
\begin{cor}
Let $n\geq0$, let $\cal C$ be an $\infty$-category with small colimits,
and let $F:\Mfld_{n}\to\cal C$ be a functor. Then:
\begin{enumerate}
\item For each $k\geq0$, the $k$th level $T_{k}F$ of the Taylor cotower
of $F$ is polynomial of degree $\leq k$ and exhaustive.
\item The Taylor cotower of $F$ is convergent if and only if it satisfies
the equivalent conditions of Theorem \ref{thm:context_free2}.
\end{enumerate}
\end{cor}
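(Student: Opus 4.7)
The proof will be a direct assembly of results already established in the paper, with no substantive new argument required. The key inputs are the observation from Definition \ref{def:Taylor} identifying the levels $T_k F$ with left Kan extensions, the characterization Proposition \ref{prop:taylor} of convergence of Taylor cotowers, and the two main theorems of this section, Theorems \ref{thm:context_free1} and \ref{thm:context_free2}.

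For part (1), my plan is to apply the observation following Definition \ref{def:Taylor}: since the inclusion $\{\id_k\}\times\cal A_{/A}^{\leq k}\hookrightarrow\bb T(\cal A)_{/(k,A)}$ is final (it is a right adjoint), the level $T_k F$ is computed as a left Kan extension of $F\vert\Disk_n^{\leq k}$. The equivalence (1)$\Leftrightarrow$(4) of Theorem \ref{thm:context_free1} then immediately gives that $T_k F$ is $k$-excisive and exhaustive.

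For part (2), I would first note the tautological identification $\bigcup_{i\geq 0}\Disk_n^{\leq i}=\Disk_n$ (every object of $\Disk_n$ has finitely many components). Applying Proposition \ref{prop:taylor} with $\cal A=\Mfld_n$ and $\cal A_i=\Disk_n^{\leq i}$, the Taylor cotower of $F$ is convergent if and only if $F$ is a left Kan extension of $F\vert\Disk_n$. By the equivalence (1)$\Leftrightarrow$(2)$\Leftrightarrow$(3) of Theorem \ref{thm:context_free2}, this is in turn equivalent to the other conditions listed there.

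There is really no main obstacle: both statements are formal consequences of the package already assembled in Section \ref{sec:context_free} and the abstract framework of Subsection \ref{subsec:taylor}. The only point worth a moment of care is matching conventions (the indexing set $\bb Z_{\geq 0}$ versus $\bb Z_{\geq 0}\cup\{\infty\}$, and confirming that ``$\cal A_\infty$'' in Proposition \ref{prop:taylor} coincides with $\Disk_n$), which is immediate in this setup.
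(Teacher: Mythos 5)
Your proposal is correct and follows the same route as the paper, which simply cites Theorem \ref{thm:context_free1} for part (1) and Proposition \ref{prop:taylor} for part (2). You have merely made explicit the intermediate observations (the finality remark in Definition \ref{def:Taylor} and the identification $\bigcup_i\Disk_n^{\leq i}=\Disk_n$) that the paper leaves to the reader.
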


\begin{proof}
Part (1) follows from Theorem \ref{thm:context_free1}. Part (2) is
a consequence of Proposition \ref{prop:taylor}.
\end{proof}

\begin{rem}
Theorem \ref{thm:context_free1} admits a following variant: Suppose
we are given a right fibration $\pi:\cal M\to\Mfld_{n}$. Set $\cal D=\Disk_{n}\times_{\Mfld_{n}}\cal M$
and $\cal D^{\leq k}=\Disk^{\le k}_{n}\times_{\Mfld_{n}}\cal M$.
Given an $\infty$-category $\cal C$ with small colimits, we say
that a functor $F:\cal M\to\cal C$ is a Weiss $k$-cosheaf if for
each $n$-manifold $M$ and each functor $\Open\pr M\to\cal M$ rendering
the diagram 
\[\begin{tikzcd}
	& {\mathcal{M}} \\
	{\operatorname{Open}(M)} & {\mathcal{M}\mathsf{fld}_n}
	\arrow[from=1-2, to=2-2]
	\arrow[from=2-1, to=1-2]
	\arrow[from=2-1, to=2-2]
\end{tikzcd}\]commutative, the composite $\Open\pr M\to\cal M\xrightarrow{F}\cal C$
is a Weiss $k$-cosheaf. We define what it means for a functor $\cal M\to\cal C$
to be a Weiss cosheaf, $k$-excisive, or exhaustive similarly. Then
the following conditions for a functor $F:\cal M\to\cal C$ are equivalent: 
\begin{enumerate}
\item $F$ is a left Kan extension of $F\vert\cal D^{\leq k}$.
\item For each $n$-manifold $M$, each small $\infty$-category $\cal I$,
and each functor $\chi:\cal I\to N\pr{\Open\pr M}$ satisfying the
Weiss $k$-condition (Definition \ref{def:Weiss_condition}), the
map
\[
\colim_{I\in\cal I}F\pr{f\pr{\chi\pr I}}\to F\pr{f\pr M}
\]
is an equivalence, where $f:\Open\pr M\to\cal M$ is any functor lifting
$\Open\pr M\to\Mfld_{n}$.
\item $F$ is a Weiss $k$-cosheaf.
\item $F$ is $k$-excisive and exhaustive.
\end{enumerate}
The proof is essentially the same as that of Theorem \ref{thm:context_free1},
using the following observations: 
\begin{itemize}
\item For each object $\overline{M}\in\cal M$ with image $M\in\Mfld_{n}$,
the functor $\cal D_{/\overline{M}}\to\Disk_{n/M}$ is a trivial fibration
(because $\pi$ is a right fibration).
\item In point (2), the composite $\Open\pr M\to\cal M\xrightarrow{F}\cal C$
is an isotopy functor, because $\pi$ is conservative (being a right
fibration).
\end{itemize}
We can similarly prove a variant of Theorem \ref{thm:context_free2}
for $\cal M$ and $\cal D$; we can also consider right fibrations
over $\Mfld_{\sm,n}$. 

One prominent source of right fibrations over $\Mfld_{n}$ comes from
framed manifolds. Let $B\Top\pr n\subset\Mfld_{n}$ denote the full
subcategory spanned by $\bb R^{n}$. Given a map $p:B\to B\Top\pr n$
of Kan complexes, the $\infty$-category of \textbf{$B$-framed $n$-manifolds}
\cite[Definition 2.7]{FHTM} is modeled by the fiber product
\[
\Mfld^{B}_{n}=\Mfld_{n}\times_{\cal S_{/B\Top\pr n}}\cal S_{/p}.
\]
Since the functor $\cal S_{/p}\to\cal S_{/B\Top\pr n}$ is a right
fibration, so is its pullback $\Mfld^{B}_{n}\to\Mfld_{n}$. Therefore,
we have analogs of Theorem \ref{thm:context_free1} and \ref{thm:context_free2}
for $B$-framed manifolds and $B$-framed disks.
\end{rem}

\section{\label{sec:bd}Boundary Case}

The results in Sections \ref{sec:poly} through \ref{sec:context_free}
extend to the case of manifolds with boundaries, with very little
modification to the arguments. We list the key changes below, leaving
the details to the reader. Throughout this section, we fix an integer
$n\geq1$ and an $\pr{n-1}$-manifold $Z$.
\begin{itemize}
\item We write $\Mfld_{Z}$ for the homotopy coherent nerve of the topological
category whose objects are $n$-manifolds $M$ with boundary equipped
with a homeomorphism $\partial M\cong Z$, and whose mapping spaces
are given by the subspace $\Emb_{\partial}\pr{M,N}\subset\Emb\pr{M,N}$
for the subspace consisting of the open embeddings $f:M\to N$ extending
the boundary identifications. We also define (the nerve of) an ordinary
category $\sf{Mfld}_{Z}$ similarly. From section 3 onward, $\Mfld_{n}$
must be replaced by $\Mfld_{Z}$, and whenever a term ``manifold''
appears in a definition, theorem, proposition, etc, we must replace
it by a ``manifold with boundary $Z$''.
\item We write $\sf{Disk}_{Z}\subset\sf{Mfld}_{Z}$ for the full subcategory
spanned by the objects isomorphic to $(Z\times\bb R_{\geq0})\amalg(\bb R^{n}\times S)$
for some finite set $S$. We then write $\Disk_{Z}\subset\Mfld_{Z}$
for the full subcategory spanned by the objects of $\sf{Disk}_{Z}$.
We define $\infty$-categories $\Disk^{\leq k}_{Z}$ and $\sf{Disk}^{\leq k}_{Z}$
in the obvious manner. From section 3 onward, $\Disk_{n}$ and $\Disk^{\leq k}_{n}$
must be replaced by $\Disk_{Z}$ and $\sf{Disk}^{\leq k}_{Z}$.
\item From section 3 onward, $\Open\pr M$ must be replaced by the poset
$\Open_{\partial}\pr M$ of open sets of $M$ containing $\partial M$.
For each $U\in\Open_{\partial}\pr M$, we write $U_{\partial}\subset U$
for the union of the components that intersect $\partial M$, and
set $U_{\mathrm{in}}=U\setminus U_{\partial}$.
\item From section 3 onward, we must replace $\Disj\pr M$ by the full subposet
$\Disj_{\partial}\pr M\subset\Open_{\partial}\pr M$ of those $U$
of the form $U_{\partial}\amalg U_{\mathrm{in}}$, where $U_{\partial}$
is a neighborhood of $M$ and $U_{\mathrm{in}}$ is an open set of
$M^{\circ}$. A similar remark applies to $\Disj^{\leq k}$. We then
define $I^{=k}_{M}$ as the non-full subposet of $\Disj^{\leq k}_{\partial}\pr M$
spanned by the inclusions $U\subset V$ such that $\pi_{0}\pr U\to\pi_{0}\pr V$
is bijective and $\abs{\pi_{0}\pr{U_{\mathrm{in}}}}=k$. 
\item One of the key ingredient of Section 3 is Ayala--Francis's theorem
(Theorem \ref{thm:localizing_wrt_istpy}), and we need this for for
$M\in\Mfld_{Z}$ (with various disk categories replaced by the $Z$-counterparts).
An inspection reveals that we only need the following generalization
of Proposition \ref{prop:emb_conf_cart}:
\begin{prop}
\label{prop:emb_conf_cart_bd}For every finite set $S$ and every
morphism $\phi:M\to N$ in $\sf{Mfld}_{Z}$, the square 
\[\begin{tikzcd}
	{\operatorname{Emb}_\partial(Z\times[0,\infty))\amalg(\mathbb{R}^n\times S),M)} & {\operatorname{Emb}_\partial((Z\times[0,\infty))\amalg(\mathbb{R}^n\times S),N)} \\
	{\operatorname{Conf}(S,M^\circ)} & {\operatorname{Conf}(S,N^\circ)}
	\arrow[from=1-1, to=1-2]
	\arrow[from=1-1, to=2-1]
	\arrow[from=1-2, to=2-2]
	\arrow[from=2-1, to=2-2]
\end{tikzcd}\]is homotopy cartesian.
\end{prop}

\begin{proof}
For each continuous map $\varepsilon:\partial M\to(0,\infty)$, we
define 
\[
Z\pr{\varepsilon}=\{\pr{p,t}\in\partial M\times\bb R_{\geq0}\mid0\leq t<\varepsilon\pr p\}.
\]
We then define a simplicial set $\Germ_{\partial}\pr{S,M}$ as the
colimit
\[
\Germ_{\partial}\pr{S,M}=\colim_{r,\varepsilon}\Sing\Emb_{\partial}\pr{Z\pr{\varepsilon}\amalg\pr{S\times B^{n}\pr r},M},
\]
where the colimit is indexed over all positive reals $r>0$ and all
continuous maps $\varepsilon:Z\to(0,\infty)$. The map $\Emb_{\partial}\pr{(Z\times[0,\infty))\amalg(\bb R^{n}\times S),M}\to\Germ_{\partial}\pr{S,M}$
is a homotopy equivalence, so it suffices to show that the square
\[\begin{tikzcd}
	{\operatorname{Germ}_\partial(S,M)} & {\operatorname{Germ}_\partial(S,N)} \\
	{\operatorname{Sing}\operatorname{Conf}(S,M^\circ)} & {\operatorname{Sing}\operatorname{Conf}(S,N^\circ)}
	\arrow[from=1-1, to=1-2]
	\arrow[from=1-1, to=2-1]
	\arrow[from=1-2, to=2-2]
	\arrow[from=2-1, to=2-2]
\end{tikzcd}\]is homotopy cartesian. A choice of a collar of $M$ gives us an isomorphism
of simplicial sets $\Germ_{\partial}\pr{S,M}\cong\Germ_{\partial}\pr{\emptyset,Z\times[0,1)}\times\Germ\pr{S,M^{\circ}}$,
so the claim follows from Proposition \ref{prop:emb_conf_cart}.
\end{proof}

\item In the definition of $k$-excisive functors (Definitions \ref{def:istp_exh_poly}
and \ref{def:ctxfree}), we must assume that the closed sets $A_{i}$
lie in $M^{\circ}$. 
\item In Proposition \ref{prop:I_M=00003Dk_htpytype}, Corollary \ref{cor:homog_class},
Proposition \ref{prop:inverse}, and Theorem \ref{thm:inv}, we must
replace $B_{k}\pr -$ by $B_{k}\pr{\pr -^{\circ}}$.
\item Lemma \ref{lem:homog_inv} will be a claim about $U\in\Disk_{Z}$
and the map
\[
\int^{V\in\Disj^{\leq k-1}_{\partial}\pr U}\Sing B_{k}\pr{V^{\circ}}\to\Sing\pr{B_{k}\pr{U^{\circ}}\setminus B'_{k}\pr{U_{\mathrm{in}}}}.
\]
\item In Proposition \ref{prop:inverse}, $B_{k}\pr U$ in the definition
of $\widetilde{F}$ should be replaced by $B_{k}\pr{U_{\mathrm{in}}}$.
\item In the definition of fat diagonal (Definition \ref{def:fd}), we must
adjoin to $\blacktriangle^{k}M$ the images of the points $\pr{p_{1},\dots,p_{k}}\in M^{k}$
such that some $p_{i}$ lies in $\partial M$.
\item In Definition \ref{def:gammac}, we must consider the functor $\Gamma'_{\blacktriangle}\pr{B_{k}\pr{\pr -^{\circ}};F}:\Open_{\partial}\pr M^{\op}\to\cal C$
defined by
\[
\Gamma'_{\blacktriangle}\pr{B_{k}\pr{U^{\circ}};F}=\colim_{Q\in\Nbd\pr{\blacktriangle_{k}\pr U}^{\op}}\Gamma\pr{Q\cap B_{k}\pr{U^{\circ}};F}.
\]
We must also consider the functor $\Gamma\pr{B_{k}\pr{\pr -^{\circ}};F}$.
We then define $\Gamma_{\blacktriangle}\pr{B_{k}\pr{\pr -^{\circ}};F}$
as the fiber of the map $\Gamma\pr{B_{k}\pr{\pr -^{\circ}};F}\to\Gamma'_{\blacktriangle}\pr{B_{k}\pr{\pr -^{\circ}};F}$.
\end{itemize}
\appendix

\section{\label{App:(Non)-Loc}(Non)-Localization Theorems}

Let $L:\cal C\to\cal D$ be a functor of $\infty$-categories, and
let $S$ be a set of morphisms of $\cal C$. Recall (\cite[Definition 2.4.2]{landoo-cat})
that $L$ is said to \textbf{exhibit $\cal D$ as a }(\textbf{Dwyer--Kan})\textbf{
localization of $\cal C$ with respect to $S$} it satisfies the following
conditions:
\begin{itemize}
\item The functor $L$ carries every morphism in $S$ to an equivalence.
\item For every $\infty$-category $\cal E$, the functor
\[
\Fun\pr{\cal D,\cal E}\to\Fun'\pr{\cal C,\cal E}
\]
is a categorical equivalence, where $\Fun'\pr{\cal C,\cal E}$ denotes
the full subcategory of $\Fun\pr{\cal C,\cal E}$ spanned by the functors
$\cal C\to\cal E$ carrying every morphism in $S$ to an equivalence.
\end{itemize}
A recurring theme in homotopy theory is that many nontrivial $\infty$-categories
arise as localizations of \textit{ordinary} categories. A famous theorem
of Ayala and Francis \cite[Proposition 2.19]{FHTM} is one manifestation
of this: It asserts that for each $n$-manifold $M$, the functor
\[
\D_{n/M}\to\Disk_{n/M}
\]
exhibits $\Disk_{n/M}$ as a localization of $\D_{n/M}$ with respect
to isotopy equivalences. The goal of this subsection is to present
a detailed proof of this theorem and some of its variants.\footnote{The original proof of Ayala and Francis is missing some important
justifications. The author learned the current proof from David Ayala
and is grateful to him for his explanation.}

\subsection{\label{subsec:loc}Ayala--Francis's Theorem on Localizations at
Isotopy Equivalences}

In this subsection, we give a proof of the following theorem. Point
(1) is due to Ayala and Francis:
\begin{thm}
\label{thm:localizing_wrt_istpy}Let $n\geq0$, and let $M$ be an
$n$-manifold. 
\begin{enumerate}
\item The functor $\D_{n/M}\to\Disk_{n/M}$ exhibits $\Disk_{n/M}$ as a
localization of $\D_{n/M}$ with respect to isotopy equivalences.
\item For every $k\geq0$, the functor $\D^{\leq k}_{n/M}\to\Disk^{\leq k}_{n/M}$
exhibits $\Disk^{\leq k}_{n/M}$ as a localization of $\D^{\leq k}_{n/M}$
with respect to isotopy equivalences.
\end{enumerate}
\end{thm}

We will establish Theorem \ref{thm:localizing_wrt_istpy} after a
few preliminaries. We begin with a variation of Quillen's Theorem
B. 
\begin{defn}
\cite[Definition 4.6.3]{CisinskiHCHA}\label{def:loc_const} A map
$E\to B$ of simplicial sets is said to be\textbf{ locally constant}
if for each morphism $X\to B$ of simplicial sets, the square 
\[\begin{tikzcd}
	{X\times _BE} & E \\
	X & B
	\arrow[from=1-2, to=2-2]
	\arrow[from=2-1, to=2-2]
	\arrow[from=1-1, to=1-2]
	\arrow[from=1-1, to=2-1]
\end{tikzcd}\]is homotopy cartesian in the Kan--Quillen model structure.
\end{defn}

\begin{prop}
[A Variation of Quillen's Theorem B]\label{prop:Theorem_B}Let $\pi:E\to B$
be a cocartesian fibration of simplicial sets. The following conditions
are equivalent:
\begin{enumerate}
\item The map $\pi$ is locally constant.
\item For each edge $\alpha:b\to b'$ in $B$, the induced functor
\[
\alpha_{!}:E_{b}=E\times_{B}\{b\}\to E_{b'}
\]
is a weak homotopy equivalence. 
\end{enumerate}
\end{prop}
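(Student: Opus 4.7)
The plan is to prove $(1)\Leftrightarrow(2)$ by analyzing cocartesian fibrations over $\Delta^1$ for one direction and reducing to the case where the base is a simplex for the other.

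For $(1)\Rightarrow(2)$, I would start with an edge $\alpha:\Delta^1\to B$ from $b$ to $b'$ and form the pullback cocartesian fibration $\pi_\alpha: E_\alpha := \Delta^1\times_B E \to \Delta^1$, whose fibers over the two vertices are $E_b$ and $E_{b'}$. Local constancy of $\pi$ makes the further pullbacks along $\{i\}\hookrightarrow\Delta^1$ homotopy cartesian in Kan--Quillen; since $\Delta^1$ is weakly contractible, both fiber inclusions $E_b\hookrightarrow E_\alpha$ and $E_{b'}\hookrightarrow E_\alpha$ are weak homotopy equivalences. Now for any cocartesian fibration $p:X\to\Delta^1$, the inclusion of the target fiber $X_1\hookrightarrow X$ admits a retraction built from $p$-cocartesian lifts and is a categorical equivalence (cf.\ \cite[Proposition 3.3.1.5]{HTT}), and the cocartesian transport $X_0\to X_1$ is, up to equivalence, the composite $X_0\hookrightarrow X\twoheadrightarrow X_1$. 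Therefore $\alpha_!: E_b\to E_{b'}$ is a weak homotopy equivalence.

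For $(2)\Rightarrow(1)$, the strategy is first to reduce to the case where $X$ is a simplex $\Delta^n$, and then to induct on $n$. The reduction would use that every simplicial set is a colimit of its simplex diagram along cofibrations, that pullback along $\pi$ preserves this colimit on the nose, and that assembling homotopy cartesian squares along such a diagram yields a homotopy cartesian square (via left properness of Kan--Quillen and a standard cube argument). For $X = \Delta^n$ it suffices to show that every vertex inclusion $E_i\hookrightarrow \pi^{-1}(\Delta^n)$ is a weak homotopy equivalence. The case $n=1$ is the converse of the analysis above: the inclusion $E_1\hookrightarrow E$ is automatically a categorical equivalence, while the inclusion $E_0\hookrightarrow E$ factors the cocartesian transport, which is a weak equivalence by (2). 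For $n>1$, I would use the filtration of $\Delta^n$ by faces together with the inductive hypothesis and a Mayer--Vietoris / homotopy pushout argument to reduce to the edges, where (2) applies directly.

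The main obstacle will be making the reduction from an arbitrary $X$ to the case of simplices completely rigorous, which requires careful bookkeeping with homotopy colimits of cocartesian fibrations; once restricted to simplices, the inductive step is routine thanks to the good interaction of cocartesian fibrations with the simplicial structure of $\Delta^n$.
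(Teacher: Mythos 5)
Your argument for $(1)\Rightarrow(2)$ is essentially the intended one, but it contains a misstatement worth correcting: for a cocartesian fibration $p:X\to\Delta^{1}$, the inclusion of the target fiber $X_{1}\hookrightarrow X$ is \emph{not} in general a categorical equivalence (take $p$ classified by $\id_{\Delta^{0}}$, so $X=\Delta^{1}$ and $X_{1}=\{1\}$). What is true, and all your argument needs, is that $\{1\}\hookrightarrow\Delta^{1}$ is final, so by \cite[Proposition 4.1.2.15]{HTT} the pullback $X_{1}\hookrightarrow X$ is final and hence a weak homotopy equivalence; combining this with local constancy (which makes $E_{b}\hookrightarrow E_{\alpha}$ a weak homotopy equivalence), the factorization $E_{b}\hookrightarrow E_{\alpha}\to E_{b'}$ of $\alpha_{!}$ finishes the direction.

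For $(2)\Rightarrow(1)$, your outline is a genuinely different route from the paper's, and you correctly identify its main obstacle. The paper does not attempt to show directly that the square is homotopy cartesian for each simplex and then assemble; instead, it first factors $X\to B$ as an anodyne map $i$ followed by a Kan fibration $p$, and right properness of Kan--Quillen reduces the claim to showing $X\times_{B}E\to X'\times_{B}E$ is a weak homotopy equivalence. It then proves the stronger statement that $S\times_{B}E\to T\times_{B}E$ is a weak homotopy equivalence whenever $S\to T$ is one, which by \cite[Proposition 4.6.1]{CisinskiHCHA} reduces to the generating maps $\Delta^{0}\to\Delta^{n}$. For those, it lifts the edge $\Delta^{\{i,n\}}$ cocartesianly to a homotopy $\Delta^{\{i,n\}}\times E_{b_{i}}\to\Delta^{n}\times_{B}E$ and observes that the end $\{n\}\times E_{b_{i}}\to\Delta^{n}\times_{B}E$ factors as the cocartesian transport $E_{b_{i}}\to E_{b_{n}}$ (a weak homotopy equivalence by hypothesis) followed by the inclusion of the final fiber $E_{b_{n}}\hookrightarrow\Delta^{n}\times_{B}E$ (final, hence a weak homotopy equivalence). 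This entirely sidesteps your reduction-to-simplices bookkeeping and your induction on faces of $\Delta^{n}$: the final vertex does all the work in one step. Your approach can likely be made rigorous using universality of colimits in $\SS$ and Reedy cofibrancy of the canonical simplex diagram, but the paper's use of the anodyne factorization and Cisinski's characterization of weak homotopy equivalences packages that colimit-assembly argument into a single citation, and its treatment of $\Delta^{n}$ avoids the case analysis your Mayer--Vietoris induction would require.
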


\begin{proof}
We will prove that (2)$\implies$(1); the reverse implication can
be proved similarly (and is easier). Factor the map $X\to B$ as $X\xrightarrow{i}X'\xrightarrow{p}B$,
where $i$ is anodyne and $p$ is a Kan fibration. Since the Kan--Quillen
model structure is right proper, it suffices to show that the map
$X\times_{B}E\to X'\times_{B}E$ is a weak homotopy equivalence. We
will prove something more general: We claim that, for every pair of
morphisms $K\xrightarrow{f}L\to B$ of simplicial sets with $f$ a
weak homotopy equivalence, the map
\[
K\times_{B}E\to L\times_{B}E
\]
is a weak homotopy equivalence. Let $\scr M$ denote the class of
morphisms $S\to T$ of simplicial sets such that, for any map $T\to B$,
the induced map $S\times_{B}E\to T\times_{B}E$ is a weak homotopy
equivalence. We must show that $\scr M$ contains all weak homotopy
equivalences. By \cite[Proposition 4.6.1]{CisinskiHCHA}, it suffices
to show that $\scr M$ contains all morphisms of the form $\Delta^{0}\to\Delta^{n}$,
for any $n\geq0$. 

Let $0\le i\leq n$ be integers and let $\sigma:\Delta^{n}\to B$
be an $n$-simplex, which we depict as $b_{0}\to\cdots\to b_{n}$.
We wish to show that the map $E_{b_{i}}\to\Delta^{n}\times_{B}E$
is a weak homotopy equivalence. Choose a cocartesian natural transformation
$h:\Delta^{\{i,n\}}\times E_{b_{i}}\to\Delta^{n}\times_{B}E$ rendering
the diagram 
\[\begin{tikzcd}
	{\{i\}\times E_{b_i}} & {\Delta^n\times _B E} \\
	{\Delta^{\{i,n\}}\times E_{b_i}} & {\Delta^n}
	\arrow[from=1-1, to=2-1]
	\arrow[from=1-1, to=1-2]
	\arrow[from=1-2, to=2-2]
	\arrow[from=2-1, to=2-2]
	\arrow[dashed, from=2-1, to=1-2]
\end{tikzcd}\]commutative. It will suffice to show that the restriction $h\vert\{n\}\times E_{b_{i}}:\{n\}\times E_{b_{i}}\to\Delta^{n}\times_{B}E$
is a weak homotopy equivalence. We can factor this map as
\[
\{n\}\times E_{b_{i}}\xrightarrow{h'}E_{b_{n}}\xrightarrow{h''}\Delta^{n}\times_{B}E,
\]
where $h''$ denotes the inclusion. The map $h'$ is a weak homotopy
equivalence by hypothesis. Also, since $p$ is a cocartesian fibration
and the inclusion $\{n\}\subset\Delta^{n}$ is final, \cite[Proposition 4.1.2.15]{HTT}
shows that $h''$ is final. In particular, $h''$ is a weak homotopy
equivalence. It follows that the map $h\vert\{n\}\times E_{b_{i}}$
is a composite of weak homotopy equivalences, for it is the composite
of two weak homotopy equivalences. The claim follows.
\end{proof}

We use Proposition \ref{prop:Theorem_B} to prove the following recognition
result for localizations.
\begin{prop}
\label{prop:localization_criterion}Let $f:\cal C\to\cal D$ be a
functor of $\infty$-categories. Set $\cal W=\cal C\times_{\cal D}\cal D^{\simeq}$.
Suppose that, for each object $C\in\cal C$, the map $\cal C_{/C}\times_{\cal C}\cal W\to\pr{\cal D_{/f\pr C}}^{\simeq}$
is a weak homotopy equivalence. The following conditions are equivalent:
\begin{enumerate}
\item The functor $f$ exhibits $\cal D$ as a localization of $\cal C$
with respect to morphisms in $\cal W$.
\item The map $\cal W\to\cal D^{\simeq}$ is a weak homotopy equivalence.
\end{enumerate}
\end{prop}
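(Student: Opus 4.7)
The plan is to exploit the commutative square
\[\begin{tikzcd}
\cal C_{/C}\times_{\cal C}\cal W \ar[r] \ar[d, "p_C"'] & (\cal D_{/f(C)})^{\simeq} \ar[d, "q_C"] \\
\cal W \ar[r, "g"'] & \cal D^{\simeq}
\end{tikzcd}\]
indexed by $C\in\cal C$, whose top arrow is a weak equivalence by hypothesis. Here $q_C$ is a Kan fibration (in particular, locally constant by Proposition~\ref{prop:Theorem_B}) classifying the presheaf $\Map_{\cal D}(-,f(C))\colon\cal D^{\simeq,\op}\to\Spc$, and $p_C$ is a right fibration classifying $\Map_{\cal C}(-,C)\colon\cal W^{\op}\to\Spc$. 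The key structural observation is that the hypothesis forces the classifying functor of $p_C$ to factor, up to weak equivalence, through $g^{\op}\colon\cal W^{\op}\to\cal D^{\simeq,\op}$, so that $p_C$ in particular becomes locally constant upon postcomposition with $g$.

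For the implication $(2)\Rightarrow(1)$, I would verify the universal property of the Dwyer--Kan localization by a right Kan extension argument. Given any $\infty$-category $\cal E$ and any functor $F\colon\cal C\to\cal E$ inverting morphisms in $\cal W$, I would construct the extension $\widetilde F\colon\cal D\to\cal E$ as the right Kan extension of $F$ along $f$. For each $D\in\cal D$, essential surjectivity (obtained from $(2)$ via $\pi_0$) yields $C\in\cal C$ with $f(C)\simeq D$; the hypothesis combined with $(2)$ then identifies the relevant slice $\cal C\times_{\cal D}\cal D_{/D}$, after inverting $\cal W$, with the contractible $(\cal D_{/D})^{\simeq}$, making the Kan extension well-defined and establishing the universal property.

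For the implication $(1)\Rightarrow(2)$, applying the universal property of the localization to maps into an arbitrary Kan complex $K$ yields the equivalence $\Fun(\cal D, K)\simeq\Fun^{W}(\cal C, K)$, which translates via Yoneda to $|\cal D|\simeq|(\cal C, W)|$ as $\infty$-groupoids. The hypothesis, applied for each $C\in\cal C$, exhibits the right fibration $p_C$ as a pullback (up to weak equivalence) of the Kan fibration $q_C$ along $g$. A coend computation, integrating over $C$ and invoking the Grothendieck construction for right fibrations, then converts this family of identifications into an equivalence $|\cal W|\simeq\cal D^{\simeq}$.

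The main obstacle I anticipate is the $(2)\Rightarrow(1)$ direction: the crucial step is showing that the hypothesized slice equivalence descends to the level of $F$-valued diagrams after $\cal W$-inversion. This requires a careful argument that the right fibration $p_C$, while not itself locally constant, yields the correct diagram for the Kan extension by virtue of the hypothesized identification with $q_C$; making this rigorous likely demands the machinery of marked simplicial sets or a Bousfield--Kan style bookkeeping of $\cal W$-cofinal subdiagrams.
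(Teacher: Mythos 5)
Your proposed proof takes a completely different route from the paper, which works through Mazel-Gee's criterion that $f$ is a localization at $\cal W$ if and only if the induced map of Rezk classification diagrams $N(\cal C,\cal W)\to N(\cal D)$ is a weak equivalence in the complete Segal space model structure; the hypothesis enters via Quillen's Theorem B (Proposition~\ref{prop:Theorem_B}) to show that $\operatorname{Fun}^{\cal W}(\Delta^1,\cal C)\to\operatorname{Fun}^{\cal W}(\{1\},\cal C)$ is a locally constant cocartesian fibration, which makes the Reedy fibrant replacement of $N(\cal C,\cal W)$ a Segal space and allows a levelwise comparison with $N(\cal D)$. Your approach, by contrast, attempts a direct verification of the universal property via Kan extensions. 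I see genuine gaps in both directions.

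For $(2)\Rightarrow(1)$: the right Kan extension of $F$ along $f$ need not exist, since the target $\cal E$ in the universal property is an arbitrary $\infty$-category and the relevant slices $\cal C_{D/}$ are not finite or cofinal in a controllable way. Even granting a reduction to the case where $\cal E$ has all limits (embedding into presheaves), the right Kan extension $\operatorname{Ran}_f F$ does not automatically restrict back to $F$ along $f$: that requires the object $(C,\operatorname{id}_{f(C)})$ to be initial in $\cal C_{f(C)/}$, which is false in general. Your sketch of how to repair this --- ``identifies the relevant slice, after inverting $\cal W$, with the contractible $(\cal D_{/D})^\simeq$'' --- has two problems. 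First, $(\cal D_{/D})^\simeq$ is not contractible: for $\cal D = \Delta^1$ and $D = 1$ one has $(\cal D_{/1})^\simeq \cong \partial\Delta^1$. (Having a terminal object does not make the core of an $\infty$-category contractible.) Second, the phrase ``after inverting $\cal W$'' presupposes an understanding of the localization which is exactly what is being established, so the argument is circular without further work.

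For $(1)\Rightarrow(2)$: the Yoneda/Kan-complex argument produces the wrong statement. Since every functor from $\cal C$ to a Kan complex inverts all morphisms, one has $\operatorname{Fun}^{\cal W}(\cal C,K)=\operatorname{Fun}(\cal C,K)$, so the universal property yields $|\cal C|\simeq|\cal D|$ on classifying spaces. This is a priori weaker than, and not obviously related to, the desired equivalence $\cal W\to\cal D^\simeq$; the whole point of Rezk's completeness condition, which the paper's proof carefully verifies using Dwyer--Kan equivalences of Segal spaces, is precisely to bridge that gap. The concluding coend computation you sketch is too vague for me to see how it recovers $\cal D^\simeq$ rather than $|\cal D|$; I would need to see the specific diagram being integrated and why the hypothesized pullback relation between $p_C$ and $q_C$ assembles to an identification of total spaces over $C$ rather than of fibers at a fixed $C$.
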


\begin{proof}
We will use the localization criterion using Rezk's \textbf{classification
diagram}, due to Mazel-Gee. For each simplicial set $K$, let $\Fun^{\cal W}\pr{K,\cal C}\subset\Fun\pr{K,\cal C}$
denote the subcategory spanned by the natural transformations whose
components are morphisms of $\cal W$. Recall that the classification
diagram $N\pr{\cal C,\cal W}$ is the simplicial object in $\SS$
whose $n$th simplicial set is given by
\[
N\pr{\cal C,\cal W}_{n}=\Fun^{\cal W}\pr{\Delta^{n},\cal C}=\Fun\pr{\Delta^{n},\cal C}\times_{\cal C^{n+1}}\cal W^{n+1}.
\]
We will write $N\pr{\cal D}=N\pr{\cal D,\cal D^{\simeq}}$; it is
a complete Segal space \cite{Rezk01}, called the \textbf{classifying
diagram} of $\cal D$. According to \cite[Theorem 3.8]{MR4045352}
(see also \cite[Corollary 4.6]{A23b} and \cite{AC25}), condition
(1) is equivalent to the following condition:
\begin{itemize}
\item [(1$'$)]The map $N\pr{\cal C,\cal W}\to N\pr{\cal D}$ is a weak
equivalence of the complete Segal space model structure.
\end{itemize}
We will show that condition (1$'$) is equivalent to condition (2).

First we show that the map $d_{0}:N\pr{\cal C,\cal W}_{1}\to N\pr{\cal C,\cal W}_{0}$
is locally constant. Consider the commutative diagram
\[\begin{tikzcd}
	{\operatorname{Fun}^{\mathcal{W}}(\Delta^{1},\mathcal{C})} & {\operatorname{Fun}(\Delta^{1},\mathcal{D})^\simeq} \\
	{\operatorname{Fun}^{\mathcal{W}}(\{1\},\mathcal{C})} & {\operatorname{Fun}(\{1\},\mathcal{D})^\simeq.}
	\arrow["\phi", from=1-1, to=1-2]
	\arrow["\pi"', from=1-1, to=2-1]
	\arrow["{\pi'}", from=1-2, to=2-2]
	\arrow[from=2-1, to=2-2]
\end{tikzcd}\]We wish to show that $\pi$ is locally constant. According to \cite[\href{https://kerodon.net/tag/0478}{Tag 0478}]{kerodon},
the maps $\pi$ and $\pi'$ are cocartesian fibrations and $\phi$
carries $\pi$-cocartesian morphisms to $\pi'$-cocartesian morphisms.
Therefore, for each morphism $\alpha:C\to C'$, the square 
\[\begin{tikzcd}
	{\mathcal{C}^{/C}\times _{\mathcal{C}}\mathcal{W}} & {(\mathcal{D}^{/f(C)})^\simeq} \\
	{\mathcal{C}^{/C'}\times _{\mathcal{C}}\mathcal{W}} & {(\mathcal{D}^{/f(C')})^\simeq}
	\arrow["{\alpha_!}"', from=1-1, to=2-1]
	\arrow[from=2-1, to=2-2]
	\arrow[from=1-1, to=1-2]
	\arrow["{f(\alpha)_!}", from=1-2, to=2-2]
\end{tikzcd}\]consisting of fibers of $\pi$ and $\pi'$ commutes up to natural
equivalence. By hypothesis (and the equivalence of the upper slices
and lower slices \cite[Proposition 4.2.1.5]{HTT}), the horizontal
arrows are weak homotopy equivalences. Also, since $f\pr{\alpha}$
is an equivalence, the functor $f\pr{\alpha}_{!}$ is a homotopy equivalence.
Hence $\alpha_{!}$ is a weak homotopy equivalence. It follows from
Proposition \ref{prop:Theorem_B} that $\pi$ is locally constant,
as desired.

Next, we show that the Reedy fibrant replacement of $N\pr{\cal C,\cal W}$
is a Segal space. Recall that a bisimplicial set $X$, regarded as
a simplicial object in simplicial set, is called a Segal space \cite[$\S$ 4.1]{Rezk01}
if it is Reedy fibrant and the map $X_{k}\to X_{1}\times_{X_{0}}\cdots\times_{X_{0}}X_{1}$
is a homotopy equivalence for every $k\geq2$. Equivalently, $X$
is a Segal space if and only if it is Reedy fibrant and the square
\[\begin{tikzcd}
	{X_k} & {X_1} \\
	{X_{k-1}} & {X_0}
	\arrow[from=1-1, to=1-2]
	\arrow[from=1-1, to=2-1]
	\arrow[from=1-2, to=2-2]
	\arrow[from=2-1, to=2-2]
\end{tikzcd}\]is homotopy cartesian for all $k\geq2$, where the maps in the diagrams
are induced by the inclusions $\{1\}\to\{0,1\}\hookrightarrow[k]$
and $\{1\}\to\{1,\dots,k\}\hookrightarrow[k]$. Consequently, it suffices
to show that for each $k\geq2$, the square 
\[\begin{tikzcd}
	{\operatorname{Fun}^{\mathcal{W}}(\Delta^{k},\mathcal{C})} & {\operatorname{Fun}^{\mathcal{W}}(\Delta^{1},\mathcal{C})} \\
	{\operatorname{Fun}^{\mathcal{W}}(\Delta^{\{1,\dotsm,k\}},\mathcal{C})} & {\operatorname{Fun}^{\mathcal{W}}(\{1\},\mathcal{C})}
	\arrow["\pi", from=1-2, to=2-2]
	\arrow[from=2-1, to=2-2]
	\arrow[from=1-1, to=2-1]
	\arrow[from=1-1, to=1-2]
\end{tikzcd}\]is homotopy cartesian in the Kan--Quillen model structure. Since
$\pi$ is locally constant, this is equivalent to the assertion that
the map
\[
\Fun^{\cal W}\pr{\Delta^{k},\cal C}\to\Fun^{\cal W}\pr{\Delta^{1}\amalg_{\{1\}}\Delta^{\{1,\dots,k\}},\cal C}
\]
be a weak homotopy equivalence. But this map is a trivial fibration,
for the inclusion $\Delta^{1}\amalg_{\{1\}}\Delta^{\{1,\dots,k\}}\subset\Delta^{k}$
is a weak categorical equivalence \cite[Lemma 5.4.5.10]{HTT}.

Now we show that (1$'$)$\implies$(2). Suppose that condition (1$'$)
holds. We wish to show that the map $N\pr{\cal C,\cal W}_{0}\to N\pr{\cal D}_{0}$
is a weak homotopy equivalence. Choose a trivial Reedy cofibration
$i:N\pr{\cal C,\cal W}\to\widetilde{N}\pr{\cal C,\cal W}$ with $\widetilde{N}\pr{\cal C,\cal W}$
Reedy fibrant (with respect to the Kan--Quillen model structure on
$\SS$). Since $N\pr{\cal D}$ is a complete Segal space, it is Reedy
fibrant, so the map $N\pr{\cal C,\cal W}\to N\pr{\cal D}$ factors
as
\[
N\pr{\cal C,\cal W}\xrightarrow{i}\widetilde{N}\pr{\cal C,\cal W}\xrightarrow{F}N\pr{\cal D}.
\]
Since $i$ is a levelwise weak homotopy equivalence, it suffices to
show that the map $\widetilde{N}\pr{\cal C,\cal W}_{0}\to N\pr{\cal D}_{0}$
is a weak homotopy equivalence. By hypothesis, the map $F$ is a weak
equivalence of the complete Segal space model structure. Therefore,
it suffices to show that the Segal space $\widetilde{N}\pr{\cal C,\cal W}$
is a \textit{complete} Segal space (for weak equivalences of the complete
Segal space model structure between complete Segal spaces are nothing
but levelwise homotopy equivalences). 

Let $\widetilde{N}\pr{\cal C,\cal W}_{{\rm hoeq}}\subset\widetilde{N}\pr{\cal C,\cal W}_{1}$
denote the union of the components whose vertices are homotopy equivalences
of the Segal space $\widetilde{N}\pr{\cal C,\cal W}$. We must show
that the map
\[
\theta:\widetilde{N}\pr{\cal C,\cal W}_{0}\to\widetilde{N}\pr{\cal C,\cal W}_{{\rm hoeq}}
\]
is a homotopy equivalence. Since $\widetilde{N}\pr{\cal C,\cal W}$
is a Segal space, the map $F$ is a Dwyer--Kan equivalence of Segal
spaces \cite[Theorem 7.1]{Rezk01}. Therefore, given a morphism $\alpha$
of $\cal C$, the morphism $i\pr{\alpha}$ is a homotopy equivalence
of $\widetilde{N}\pr{\cal C,\cal W}$ if and only if $Fi\pr{\alpha}$
is a homotopy equivalence of $N\pr{\cal D}$. By the definition of
$\cal W$, the latter condition holds if and only if $\alpha$ belongs
to $\cal W$. Therefore, the inverse image of $\widetilde{N}\pr{\cal C,\cal W}_{{\rm hoeq}}\subset\widetilde{N}\pr{\cal C,\cal W}_{1}$
under the weak homotopy equivalence
\[
i_{1}:\Fun^{\cal W}\pr{\Delta^{1},\cal C}\xrightarrow{\simeq}\widetilde{N}\pr{\cal C,\cal W}_{1}
\]
is the simplicial subset $\Fun\pr{\Delta^{1},\cal W}\subset\Fun^{\cal W}\pr{\Delta^{1},\cal C}$.
In particular (since $\widetilde{N}\pr{\cal C,\cal W}_{{\rm hoeq}}$
is a union of components) the map $i_{1}$ restricts to a weak homotopy
equivalence
\[
i_{1}':\Fun\pr{\Delta^{1},\cal W}\xrightarrow{\simeq}\widetilde{N}\pr{\cal C,\cal W}_{{\rm hoeq}}.
\]
Now consider the commutative diagram 
\[\begin{tikzcd}
	{\mathcal{W}} & {\operatorname{Fun}(\Delta^1,\mathcal{W})} & {\operatorname{Fun}^{\mathcal{W}}(\Delta^1,\mathcal{C})} \\
	{\widetilde{N}(\mathcal{C},\mathcal{W})_{0}} & {\widetilde{N}(\mathcal{C},\mathcal{W})_{\mathrm{hoeq}}} & {\widetilde{N}(\mathcal{C},\mathcal{W})_{1}.}
	\arrow[from=1-2, to=1-3]
	\arrow["{i'_1}", from=1-2, to=2-2]
	\arrow["{i_1}", from=1-3, to=2-3]
	\arrow[from=2-2, to=2-3]
	\arrow["\theta"', from=2-1, to=2-2]
	\arrow["{\theta'}", from=1-1, to=1-2]
	\arrow["{i_0}", from=1-1, to=2-1]
	\arrow["\simeq"', from=1-1, to=2-1]
	\arrow["\simeq"', from=1-2, to=2-2]
	\arrow["\simeq"', from=1-3, to=2-3]
\end{tikzcd}\]The maps $i_{0},i_{1}',i_{1}$ are weak homotopy equivalences, and
so is the map $\theta'$ (for it is a left (and a right) adjoint).
Therefore, the map $\theta$ is a weak homotopy equivalence, as required.

Next we show (2)$\implies$(1$'$). It will suffice to show that,
for each $n\geq0$, the map $N\pr f_{n}:N\pr{\cal C,\cal W}_{n}\to N\pr{\cal D}_{n}$
is a weak homotopy equivalence. Since $\widetilde{N}\pr{\cal C,\cal W}$
is a Segal space, we only need to prove this in the case where $n\in\{0,1\}$.
If $n=0$, the claim follows from our hypothesis (2). If $n=1$, we
consider the commutative diagram
\[\begin{tikzcd}
	{\operatorname{Fun}^{\mathcal{W}}(\Delta^{1},\mathcal{C})} & {\operatorname{Fun}(\Delta^{1},\mathcal{D})^\simeq} \\
	{\operatorname{Fun}^{\mathcal{W}}(\{1\},\mathcal{C})} & {\operatorname{Fun}(\{1\},\mathcal{D})^\simeq.}
	\arrow["\pi"', from=1-1, to=2-1]
	\arrow["{\pi'}", from=1-2, to=2-2]
	\arrow["{N(f)_0}"', from=2-1, to=2-2]
	\arrow["{N(f)_1}", from=1-1, to=1-2]
\end{tikzcd}\]As we already saw in the second paragraph of the proof, the vertical
maps are locally constant cocartesian fibrations. (The map $\pi'$
is even a Kan fibration). Also, our hypothesis ensures that, for each
$C\in\Fun^{\cal W}\pr{\{1\},\cal C}$, the induced map $\pi^{-1}\pr C\to\pr{\pi'}^{-1}\pr{f\pr C}$
between the fibers is a weak homotopy equivalence. Hence the square
is homotopy cartesian in the Kan--Quillen model structure. Since
$N\pr f_{0}$ is a weak homotopy equivalence, so must be $N\pr f_{1}$,
and the proof is complete.
\end{proof}

\begin{cor}
\label{cor:localization_criterion}Let $\cal C$ and $\cal D$ be
$\infty$-categories, let $\cal C'\subset\cal C$ and $\cal D'\subset\cal D$
be full subcategories, and let $f:\cal C\to\cal D$ be a functor which
carries $\cal C'$ into $\cal D'$. Set $\cal W=\cal C\times_{\cal D}\cal D^{\simeq}$.
Suppose that, for each $C\in\cal C$, the functor
\[
\cal C'_{/C}\times_{\cal C}\cal W\to\pr{\cal D'_{/f\pr C}}^{\simeq}
\]
is a weak homotopy equivalence. Then for each $C\in\cal C$, the functor
\[
\cal C'_{/C}\to\cal D'_{/f\pr C}
\]
is a localization with respect to the morphisms whose images in $\cal D'_{f/\pr C}$
are equivalences.
\end{cor}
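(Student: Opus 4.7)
The plan is to reduce the corollary to a direct application of Proposition \ref{prop:localization_criterion}, applied to the functor $f_{C}:\cal C'_{/C}\to\cal D'_{/f\pr C}$ induced by $f$. To carry this out we must identify the two inputs of the proposition in a form where the hypothesis of the corollary directly supplies them.

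First, I would identify the relevant class of morphisms. Let $\cal W^{\pr C}=\cal C'_{/C}\times_{\cal D'_{/f\pr C}}\pr{\cal D'_{/f\pr C}}^{\simeq}$. Since $\cal D'\subset\cal D$ is a full subcategory and the forgetful functor $\cal D'_{/f\pr C}\to\cal D'$ is conservative, a morphism in $\cal C'_{/C}$ lies in $\cal W^{\pr C}$ if and only if its underlying morphism in $\cal C$ is sent by $f$ to an equivalence in $\cal D$, i.e., lies in $\cal W$. Hence there is a canonical isomorphism $\cal W^{\pr C}\cong\cal C'_{/C}\times_{\cal C}\cal W$. Under this identification, condition (2) of Proposition \ref{prop:localization_criterion} (for the functor $f_{C}$) asks that the map
\[
\cal C'_{/C}\times_{\cal C}\cal W\to\pr{\cal D'_{/f\pr C}}^{\simeq}
\]
be a weak homotopy equivalence, which is precisely the hypothesis of the corollary.

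Next, I would verify condition (1) of the proposition for $f_{C}$. For any object $X\in\cal C'_{/C}$, the standard equivalence $\pr{\cal E_{/C}}_{/X}\simeq\cal E_{/X}$ (for $\cal E=\cal C'$ and $\cal E=\cal D'$ respectively) together with the previous identification of $\cal W^{\pr C}$ yields a natural equivalence
\[
\pr{\cal C'_{/C}}_{/X}\times_{\cal C'_{/C}}\cal W^{\pr C}\simeq\cal C'_{/X}\times_{\cal C}\cal W,\qquad\pr{\pr{\cal D'_{/f\pr C}}_{/f_{C}\pr X}}^{\simeq}\simeq\pr{\cal D'_{/f\pr X}}^{\simeq}.
\]
Thus condition (1) for $f_{C}$ reduces to the assertion that $\cal C'_{/X}\times_{\cal C}\cal W\to\pr{\cal D'_{/f\pr X}}^{\simeq}$ is a weak homotopy equivalence for every such $X$, which is exactly the hypothesis of the corollary applied with $X$ in place of $C$. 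With both hypotheses of Proposition \ref{prop:localization_criterion} verified, the proposition yields the claim.

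The only mildly delicate point is justifying the iterated-slice identification $\pr{\cal E_{/C}}_{/X}\simeq\cal E_{/X}$ and its compatibility with the pullback against $\cal W$; this is a standard fact for the slice construction (cf.\ \cite[Proposition 4.2.1.5]{HTT}), and once it is in place the proof is essentially bookkeeping.
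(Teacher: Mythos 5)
Your proposal is correct and follows essentially the same route as the paper: apply Proposition \ref{prop:localization_criterion} to the induced functor on slices, noting that its condition (2) is exactly the corollary's hypothesis at $C$, and verify the proposition's overrunning hypothesis by reducing the iterated slice $(\cal C'_{/C})_{/\alpha}$ to $\cal C'_{/C'}$ (where $C'$ is the source of $\alpha$). The paper packages the reduction as a commuting square whose vertical maps are trivial fibrations rather than invoking the iterated-slice equivalence abstractly, but the content is the same; one small notational point is that "the hypothesis applied with $X$ in place of $C$" should read "with the source $C'$ of $\alpha=X$ in place of $C$."
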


\begin{proof}
We apply Proposition \ref{prop:localization_criterion} to the functor
$\cal C'_{/C}\to\cal D'_{/f\pr C}$. It suffices to show that, for
each object $\pr{\alpha:C'\to C}\in\cal C'_{/C}$, the map
\[
\theta:\cal C'_{/\alpha}\times_{\cal C}\cal W\to\pr{\cal D'_{/f\pr{\alpha}}}^{\simeq}
\]
is a weak homotopy equivalence. Consider the commutative diagram
\[\begin{tikzcd}
	{\mathcal{C}'_{/\alpha}\times _{\mathcal{C}}\mathcal{W}} & {(\mathcal{D}'_{/f(\alpha)})^\simeq} \\
	{\mathcal{C}'_{/C'}\times _{\mathcal{C}}\mathcal{W}} & {(\mathcal{D}'_{/f(C')})^\simeq.}
	\arrow["\theta", from=1-1, to=1-2]
	\arrow[from=1-2, to=2-2]
	\arrow[from=1-1, to=2-1]
	\arrow["{\theta'}"', from=2-1, to=2-2]
\end{tikzcd}\]The vertical maps are trivial fibrations, and $\theta'$ is a weak
homotopy equivalence by hypothesis. Hence $\theta$ is a weak homotopy
equivalence, as desired.
\end{proof}

To apply Corollary \ref{cor:localization_criterion} to our context,
we need a lemma. For each $n,k\geq0$, write $\D^{=k}_{n}$ and $\Disk^{=k}_{n}$
for the full subcategories of $\D_{n}$ and $\D_{n}$ spanned by the
objects homeomorphic to $\bb R^{n}\times\{1,\dots,k\}$. Note that,
by Kister's theorem \cite{Kister_MFB}, the subcategory $\D^{=k}_{n/M}\times_{\Mfld_{n}}\Mfld^{\simeq}_{n}\subset\sf{Disk}^{=k}_{n/M}$
is spanned by the morphisms which induce bijections between the sets
of components.
\begin{lem}
\label{lem:loc_lem}Let $n,k\geq0$, and let $M$ be an $n$-manifold.
The map
\[
\D^{=k}_{n/M}\times_{\Mfld_{n}}\Mfld^{\simeq}_{n}\to\pr{\Disk^{=k}_{n/M}}^{\simeq}
\]
is a weak homotopy equivalence.
\end{lem}

For the proof of Lemma \ref{lem:loc_lem}, we introduce a bit of notation.
\begin{notation}
\label{nota:B'_k}Let $k\geq0$, and let $M$ be a manifold. 
\begin{itemize}
\item We will write $B'_{k}\pr M\subset B_{k}\pr M$ for the open set consisting
of those subsets $S\subset M$ of cardinality $k$ such that the map
$\pi_{0}\pr S\to\pi_{0}\pr M$ is injective.
\item Let $k\geq0$, and let $M$ be a manifold. We let $I^{=k}_{M}$ denote
the (non-full) subcategory of $\Disj\pr M$ spanned by the isotopy
equivalences between the elements of $\Disj\pr M$ with exactly $k$
components.
\end{itemize}
\end{notation}

\begin{proof}
[Proof of Lemma \ref{lem:loc_lem}]The assignment $U\mapsto\pr{U\hookrightarrow M}$
induces a categorical equivalence $I^{=k}_{M}\xrightarrow{\simeq}\D^{=k}_{n/M}\times_{\Mfld_{n}}\Mfld^{\simeq}_{n}$,
so it suffices to show that the composite
\[
\theta:I^{=k}_{M}\to\D^{=k}_{n/M}\times_{\Mfld_{n}}\Mfld^{\simeq}_{n}\to\pr{\Disk^{=k}_{n/M}}^{\simeq}
\]
is a weak homotopy equivalence.

Given $n$-manifolds $X,Y$, let $\Emb^{\simeq}\pr{X,Y}\subset\Emb\pr{X,Y}$
denote the subspace consisting of the isotopy equivalences from $X$
to $Y$. According to Proposition \ref{prop:final_map_into_slice_2}
and \cite[Proposition 4.2.4.1]{HTT}, the map $\theta$ is a weak
homotopy equivalence if and only if the following condition holds:
\begin{itemize}
\item [($\ast$)]For each object $U\in\Disk^{=k}_{n}$, the map
\[
\hocolim_{V\in I^{=k}_{M}}\Sing\Emb^{\simeq}\pr{U,V}\to\Sing\Emb\pr{U,M}
\]
is a weak homotopy equivalence.
\end{itemize}
We will prove ($\ast$). The claim is obvious if $k=0$, so assume
that $k\geq1$. Fix a homeomorphism $U\cong\bb R^{n}\times\{1,\dots,k\}$
and let $V_{1},\dots,V_{k}\subset V$ denote the components of $V$.
Evaluation at the origin gives us a commutative diagram 
\[\begin{tikzcd}
	{\operatorname{Emb}^{\simeq}(U,V)} & {\operatorname{Emb}(U,V)} & {\operatorname{Emb}(U,M)} \\
	{\coprod_{\sigma\in \Sigma_k}\prod_{i=1}^kV_{\sigma(i)}} & {\operatorname{Conf}(k,V)} & {\operatorname{Conf}(k,M)} \\
	{B'_{k}(V)} & {B_k(V)} & {B_k(M)}
	\arrow[from=1-1, to=2-1]
	\arrow[from=1-1, to=1-2]
	\arrow[from=1-2, to=2-2]
	\arrow[from=1-2, to=1-3]
	\arrow[from=1-3, to=2-3]
	\arrow[from=2-2, to=2-3]
	\arrow[from=2-1, to=2-2]
	\arrow[from=2-3, to=3-3]
	\arrow[from=2-2, to=3-2]
	\arrow[from=3-2, to=3-3]
	\arrow[from=2-1, to=3-1]
	\arrow[from=3-1, to=3-2]
	\arrow["{(1)}"{description}, draw=none, from=1-2, to=2-3]
	\arrow["{(3)}"{description}, draw=none, from=1-1, to=2-2]
	\arrow["{(4)}"{description}, draw=none, from=2-1, to=3-2]
	\arrow["{(2)}"{description}, draw=none, from=2-2, to=3-3]
\end{tikzcd}\]of topological spaces. Proposition \ref{prop:emb_conf_cart} shows
that square (1) is homotopy cartesian. Squares (2), (3), (4) are homotopy
cartesian because they are strictly cartesian and their vertical arrows
are Serre fibrations. It follows that the outer square is homotopy
cartesian. Since colimits in $\cal S$ are universal \cite[Lemma 6.1.3.14]{HTT},
we are reduced to showing that the map
\[
\hocolim_{V\in I^{=k}_{M}}\Sing B'_{k}\pr V\to\Sing B_{k}\pr M
\]
is a weak homotopy equivalence. This follows from \cite[Theoerm A.3.1]{HA}.
\end{proof}

We now arrive at the proof of Theorem \ref{thm:localizing_wrt_istpy}.

\begin{proof}
[Proof of Theorem \ref{thm:localizing_wrt_istpy}]We will apply Corollary
\ref{cor:localization_criterion} to the functor $\sf{Mfld}_{n}\to\Mfld_{n}$
and the full subcategories $\D_{n}\subset\sf{Mfld}_{n}$ and $\Disk_{n}\subset\Mfld_{n}$
for part (1), and the full subcategories $\D^{\leq k}_{n}\subset\sf{Mfld}_{n}$
and $\Disk^{\leq k}_{n}\subset\Mfld_{n}$ for part (2). For part (1),
we must show that for each $n$-manifold $M$, the functor
\[
\D_{n/M}\times_{\Mfld_{n}}\Mfld^{\simeq}_{n}\to\pr{\Disk_{n/M}}^{\simeq}
\]
is a weak homotopy equivalence, and for part (2), we must show that
the functor
\[
\D^{\leq k}_{n/M}\times_{\Mfld_{n}}\Mfld^{\simeq}_{n}\to\pr{\Disk^{\leq k}_{n/M}}^{\simeq}
\]
is a weak homotopy equivalence. Both of these assertions follow from
Lemma \ref{lem:loc_lem}.
\end{proof}

\begin{rem}
\label{rem:loc_sm}The proof of Theorem \ref{thm:localizing_wrt_istpy}
carries over to the smooth case (using Remark \ref{rem:emb_conf_cart}).
In other words, for each  $n,k\geq0$ and each smooth $n$-manifold
$M$, the functors $\sf{Disk}^{\leq k}_{\sm,n/M}\to\Disk^{\leq k}_{\sm,n/M}$
and $\sf{Disk}_{\sm,n/M}\to\Disk_{\sm,n/M}$ are localization with
respect to smooth isotopy equivalences.
\end{rem}

\begin{rem}
In \cite[Lemma A.1, A.2]{KSW24}, Karlsson, Scheimbauer, and Walde
independently proved Proposition \ref{prop:Theorem_B} and the implication
(2)$\implies$(1) of Proposition \ref{prop:localization_criterion}.
\end{rem}

\begin{variant}
\label{var:o-small}Let $n\geq0$, and let $M$ be an $n$-manifold.
Let $\cal O$ be a basis of the topology of $M$ whose elements are
homeomorphic to $\bb R^{n}$, and let $\sf{Disk}^{\cal O}_{n/M}\subset\sf{Disk}_{n/M}$
denote the full subcategory spanned by the objects whose images are
finite disjoint union of elements in $\cal O$. The functor $\sf{Disk}^{\cal O}_{n/M}\to\Disk_{n/M}$
is a localization at isotopy equivalences.

Indeed, by applying Corollary \ref{cor:localization_criterion} to
the full subcategories $\sf{Disk}^{\cal O}_{n/M}\subset\pr{\sf{Disk}^{\cal O}_{n/M}}^{\rcone}$
and $\Disk_{n/M}\subset\pr{\Disk_{n/M}}^{\rcone}$, we are reduced
to showing that the maps
\[
\sf{Disk}^{\cal O}_{n/N}\times_{\Mfld_{n}}\Mfld^{\simeq}_{n}\to\pr{\Disk_{n/N}}^{\simeq}
\]
is a weak homotopy equivalence for all $N\in\Mfld_{n}$, which can
be proved exactly as in Lemma \ref{lem:loc_lem}. 

By a similar argument, we find that the functors 
\[
\sf{Disk}^{\cal O}_{n/M}\cap\sf{Disk}^{\leq k}_{n/M}\to\Disk^{\leq k}_{n/M}
\]
is a localizations at isotopy equivalences for every $k\geq0$.
\end{variant}

\subsection{Global case}

Ayala--Francis's theorem (Theorem \ref{thm:localizing_wrt_istpy}),
the main result of the previous subsection, is crucial in the development
of non-context-free manifold calculus. In this section, we consider
a context-free analog of the localization theorem. The results in
this section will not be used elsewhere in this paper.

It turns out that the localization theorem is a bit nuanced in the
context-free case. (see also {[}AF20, 2.2.13{]} for a relevant observation,
and \cite[Corollary 2.7]{DWW03} for a closely related result.) The
validity of the theorem depends on which category (topological or
smooth) we work in, as the following theorem shows.
\begin{thm}
\label{thm:nonloc}Let $n,k\geq1$.
\begin{enumerate}
\item Both of the functors $\sf{Disk}^{\leq k}_{n}\to\Disk^{\leq k}_{n}$
and $\sf{Disk}_{n}\to\Disk_{n}$ are localizations with respect to
isotopy equivalences.
\item Neither of the functors $\sf{Disk}^{\leq k}_{\sm,n}\to\Disk^{\leq k}_{\sm,n}$
and $\sf{Disk}_{\sm,n}\to\Disk_{\sm,n}$ is a localization with respect
to smooth isotopy equivalences.
\end{enumerate}
\end{thm}

\begin{proof}
We begin with (1). We will focus on the functor $\sf{Disk}_{n}\to\Disk_{n}$;
the proof that the functor $\sf{Disk}^{\leq k}_{n}\to\Disk^{\leq k}_{n}$
is a localization is similar. By Proposition \ref{prop:localization_criterion}
and Lemma \ref{lem:loc_lem}, it suffices to show that the functor
\[
\D_{n}\times_{\Disk_{n}}\Disk^{\simeq}_{n}\to\Disk^{\simeq}_{n}
\]
is a weak homotopy equivalence. For convenience, in this proof we
will replace $\D_{n}$ and $\Disk_{n}$ by their full subcategories
spanned by the objects $\{\bb R^{n}\times\{1,\dots,k\}\}_{k=0,1,\dots}$
and still denote them by $\D_{n}$ and $\Disk_{n}$.

Regard the $k$th symmetric group $\Sigma_{k}$ as a category with
one object $\{1,\dots,k\}$ with morphism given by bijections, and
let $N\Sigma_{k}$ denote its nerve. We consider the commutative diagram
\[\begin{tikzcd}
	{\mathsf{Disk}_n\times_{\mathcal{D}\mathsf{isk}_n}\mathcal{D}\mathsf{isk}_n^\simeq} && {\mathcal{D}\mathsf{isk}_n^\simeq} \\
	& {\coprod_{k\geq 0}N\Sigma_k}
	\arrow[from=1-1, to=1-3]
	\arrow["q", from=1-3, to=2-2]
	\arrow["p"', from=1-1, to=2-2]
\end{tikzcd}\]where $p$ and $q$ are given by $f\mapsto\pi_{0}\pr f$. The functors
$p$ and $q$ are cartesian fibrations. Since every morphism of $\coprod_{k\geq0}N\Sigma_{k}$
is an equivalence, both $p$ and $q$ are locally constant (Definition
\ref{def:loc_const}). Therefore, it suffices to show that for each
$k\geq0$, the map
\[
\theta_{k}:p^{-1}\pr{\{1,\dots,k\}}\to q^{-1}\pr{\{1,\dots,k\}}
\]
is a weak homotopy equivalence. 

We can identify the map $\theta_{k}$ with the map
\[
\pr{N\Emb\pr{\bb R^{n},\bb R^{n}}_{\delta}}^{k}\to\pr{N\Emb\pr{\bb R^{n},\bb R^{n}}}^{k},
\]
where $N\Emb\pr{\bb R^{n},\bb R^{n}}$ denotes the homotopy coherent
nerve of the topological monoid $\Emb\pr{\bb R^{n},\bb R^{n}}$ (regarded
as a topological category with a single object) and $N\Emb\pr{\bb R^{n},\bb R^{n}}_{\delta}$
denotes the nerve of the same category with the discrete topology.
It thus suffices to show that the map
\[
N\Emb\pr{\bb R^{n},\bb R^{n}}_{\delta}\to N\Emb\pr{\bb R^{n},\bb R^{n}}
\]
is a weak homotopy equivalence. This is a consequence of McDuff's
theorem \cite[Corollary 2.15]{McDuff80}. (For a comparison between
the homotopy coherent nerve and McDuff's model of classifying spaces,
see \cite[Corollary 4.2]{A24a}.)

Next, we prove (2). As in the previous paragraph, it suffices to show
that the map
\[
N\Emb_{\sm}\pr{\bb R^{n},\bb R^{n}}_{\delta}\to N\Emb_{\sm}\pr{\bb R^{n},\bb R^{n}}
\]
is \textit{not} a weak homotopy equivalence. Arguing as in \cite[Proof of Theorem 1.1]{McDuff80},
we can reduce this to showing that the map $B\Gamma^{\infty}_{n}\to BGL_{n}\pr{\bb R}$
is not a weak homotopy equivalence of topological spaces, where $\Gamma^{\infty}_{n}$
denotes Haefliger's groupoid \cite[p.143]{Haefliger71} for codimension
$n$ smooth foliations. (See also Subsection \ref{subsec:final_remark}.)
This is a consequence of Bott's theorem \cite{Bott70} (see \cite[p. 143, I.8 (a)]{Haefliger71}).
\end{proof}

\subsection{\label{subsec:final_remark}A Remark on Classifying Spaces of Groupoids
Internal to \texorpdfstring{$\mathsf{Top}$}{Top}}

A little care is necessary in the final step of Theorem \ref{thm:nonloc},
because in \cite{McDuff80} and \cite{Haefliger71}, McDuff and Haefliger
use different models of classifying spaces of groupoids internal to
the category $\sf{Top}$ of topological spaces. The equivalence of
the two models is probably folklore, but the author is not aware of
a convenient reference for this. Thus we record a proof below.

We start by recalling McDuff and Haefliger's models.
\begin{construction}
Let $\cal G$ be a groupoid internal to $\sf{Top}$. Abusing notation,
let $\cal G$ denote the simplicial topological space corresponding
to $\cal G$. (Thus $\cal G_{1}=\mor\cal G$, $\cal G_{2}=\mor\cal G\times_{\ob\cal G}\mor\cal G$,
etc.) McDuff and Haefliger define the classifying space of $\cal G$
as follows:
\begin{itemize}
\item In \cite{McDuff80}, McDuff defines the classifying space of $\cal G$
to be the fat realization of the simplicial topological space $\cal G$.
We denote this topological space by $B_{{\rm MD}}\pr{\cal G}$.\footnote{This model was introduced by Segal {\cite{Segal1968}}.}
\item Haefliger's model is slightly more complicated. First, consider the
space $E_{{\rm Hae}}\cal G$ whose points are the equivalence classes
of formal expressions of the form
\[
\pr{t_{0},g_{0},t_{1},g_{1},\dots}
\]
where $g_{0},g_{1},\dots$ are morphisms of $\cal G$ with a common
codomain and $t_{0},t_{1},\dots$ are non-negative real numbers, all
but finitely many of which are zero, and $\sum^{\infty}_{i=0}t_{i}=1$.
Two expressions $\pr{t_{0},g_{0},t_{1},g_{1},\dots}$ and $\pr{t'_{0},g'_{0},t'_{1},g'_{1},\dots}$
are equivalent if $t_{i}=t_{i}'$ for all $i$ and $g_{i}=g'_{i}$
whenever $t_{i}>0$. The equivalence class of $\pr{t_{0},g_{0},t_{1},g_{1},\dots}$
will be denoted by $\bigoplus_{i\geq0}t_{i}g_{i}=t_{0}g_{0}\oplus t_{1}g_{1}\oplus\cdots$.
We will write $\tau_{i}:E_{{\rm Hae}}\cal G\to[0,1]$ for the set
map $\tau_{i}\pr{\bigoplus_{j\geq0}t_{j}g_{j}}=t_{i}$, and define
$p_{i}:\tau^{-1}_{i}\pr{(0,1]}\to\mor\cal G$ by $p_{i}\pr{\bigoplus_{j\geq0}t_{j}g_{j}}=g_{i}$.
We topologize $E_{{\rm Hae}}\cal G$ so that it has the following
universal property: If $X$ is a topological space, then a set map
$X\to E_{{\rm Hae}}\cal G$ is continuous if and only if the composites
$\tau_{i}f:X\to[0,1]$ and $p_{i}f:\pr{\tau_{i}f}^{-1}\pr{(0,1]}\to\mor\cal G$
are continuous. 

Now introduce an equivalence relation on $E_{{\rm Hae}}\cal G$ as
follows: Two points $\bigoplus_{j\geq0}t_{j}g_{j}$ and $\bigoplus_{j\geq0}t'_{j}g'_{j}$
are equivalent if there is some morphism $g$ of $\cal G$ such that
$\bigoplus_{j\geq0}t_{j}gg_{j}=\bigoplus_{j\geq0}t_{j}gg'_{j}$. The
resulting quotient space will be denoted by $B_{{\rm Hae}}\cal G$.
We still denote by $\tau_{i}:B_{{\rm Hae}}\cal G\to[0,1]$ the map
induced by $\tau_{i}:E_{{\rm Hae}}\cal G\to[0,1]$. The space $B_{{\rm Hae}}\cal G$
is the model of the classifying space of $\cal G$ used by Haefliger
in \cite{Haefliger71}.\footnote{This model was introduced by Milnor in {\cite{Milnor56}}.}
\end{itemize}
\end{construction}

These models are equivalent in the following sense:
\begin{prop}
\label{prop:comparison}Let $\cal G$ be a groupoid internal to $\sf{Top}$.
There is a zig-zag of weak homotopy equivalences between $B_{{\rm MD}}\cal G$
and $B_{{\rm Hae}}\cal G$, which is natural in $\cal G$.
\end{prop}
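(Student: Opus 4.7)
The plan is to construct an explicit natural continuous map $\phi_{\cal G}\colon B_{\mathrm{MD}}\cal G\to B_{\mathrm{Hae}}\cal G$ and verify it is a weak homotopy equivalence, essentially following the classical comparison for topological groups.

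Represent a point of $B_{\mathrm{MD}}\cal G=\Vert N\cal G_{\bullet}\Vert$ by $(t,\sigma)$ with $t=(t_0,\dots,t_n)\in\Delta^n$ and $\sigma=(g_1,\dots,g_n)$ a composable chain $x_0\xrightarrow{g_1}\cdots\xrightarrow{g_n}x_n$ in $\cal G$. I would define
$$\phi_{\cal G}([t,\sigma])\;=\;t_0(g_n\cdots g_1)\oplus t_1(g_n\cdots g_2)\oplus\cdots\oplus t_{n-1}g_n\oplus t_n\,\mathrm{id}_{x_n},$$
all of whose summands share the common codomain $x_n$. A routine case analysis shows that the face relations $d_i$ with $0\le i<n$ are respected verbatim in $E_{\mathrm{Hae}}\cal G$, while the terminal face $d_n$ lands on the $\cal G$-equivalence class of the expected point, with relating morphism $g=g_n$ shifting the common codomain from $x_{n-1}$ to $x_n$. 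Continuity is checked against the universal property of the topology on $E_{\mathrm{Hae}}\cal G$ (continuity of $\tau_i\circ\phi_{\cal G}$ and of $p_i\circ\phi_{\cal G}$ on $\{\tau_i>0\}$), and naturality in $\cal G$ is immediate from the formula.

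To show $\phi_{\cal G}$ is a weak equivalence, I would lift it to a $\cal G$-equivariant comparison between models of $E\cal G$. Let $E_{\bullet}\cal G$ be the simplicial space whose $n$-simplices are composable $(n+1)$-chains $x_0\to\cdots\to x_{n+1}$ in $\cal G$, viewed as a simplicial space over $\mathrm{ob}\cal G$ via the terminal object; its fat realization $E\cal G=\Vert E_{\bullet}\cal G\Vert$ carries a free right $\cal G$-action on the terminal morphism, has quotient $E\cal G/\cal G\cong B_{\mathrm{MD}}\cal G$, and is fiberwise contractible over $\mathrm{ob}\cal G$ via the extra degeneracy appending an identity morphism at the end. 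Similarly, $E_{\mathrm{Hae}}\cal G$ is fiberwise contractible over $\mathrm{ob}\cal G$ via Milnor's classical shift homotopy $\bigoplus_j t_j g_j\mapsto(1-s)\bigoplus_j t_j g_j\oplus s\cdot\mathrm{id}_{\mathrm{cod}}$. The same formula as above, retaining rather than collapsing the terminal morphism, defines a $\cal G$-equivariant continuous lift $\tilde{\phi}_{\cal G}\colon E\cal G\to E_{\mathrm{Hae}}\cal G$ of $\phi_{\cal G}$; since both source and target are fiberwise contractible over $\mathrm{ob}\cal G$ and $\tilde{\phi}_{\cal G}$ commutes with the structure map to $\mathrm{ob}\cal G$, it is a weak equivalence.

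The main obstacle is the descent of this equivariant weak equivalence to the quotients. For this I would verify that both $E\cal G\to B_{\mathrm{MD}}\cal G$ and $E_{\mathrm{Hae}}\cal G\to B_{\mathrm{Hae}}\cal G$ are numerable principal $\cal G$-bundles in the sense appropriate for topological groupoids: the latter carries the canonical numerable cover $\{\tau_i>0\}_{i\ge 0}$ with partition of unity $\{\tau_i\}$, and the former carries the analogous barycentric cover obtained from the simplicial structure. Combined with the fiberwise weak equivalence of $\tilde{\phi}_{\cal G}$, standard techniques (e.g., applying the five lemma to the long exact sequences of homotopy groups at each object $x\in\mathrm{ob}\cal G$, or working in an appropriate model structure on simplicial presheaves over $\mathrm{ob}\cal G$) then yield that $\phi_{\cal G}$ is a weak equivalence. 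Naturality of the resulting (single-arrow) zig-zag is clear since every step of the construction is functorial in $\cal G$.
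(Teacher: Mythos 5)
Your map $\phi_{\cal G}\colon B_{\rm MD}\cal G=\Vert N\cal G_\bullet\Vert\to B_{\rm Hae}\cal G$ is not well-defined, and this is precisely the difficulty the paper's zig-zag is introduced to resolve. The fat realization $\Vert N\cal G_\bullet\Vert$ is a quotient of $\coprod_n \abs{\Delta^n}\times (N\cal G)_n$ by the face identifications $(d^i t,\sigma)\sim(t,d_i\sigma)$, and your formula does not respect them for $i<n$. Concretely, take a $1$-simplex $\sigma=(x_0\xrightarrow{g_1}x_1)$ and its $0$th face relation $((0,1),\sigma)\sim((1),d_0\sigma)$. Your formula sends the left side to $0\cdot g_1\oplus 1\cdot\id_{x_1}$, a point with $\tau_0=0$, $\tau_1=1$; it sends the right side to $1\cdot\id_{x_1}$, a point with $\tau_0=1$, $\tau_1=0$. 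In Milnor's model (the $E_{\rm Hae}\cal G$ of the paper) a point carries its slot indices as honest data: the equivalence relation only allows changing $g_i$ when $t_i=0$, never deleting or shifting slots, and the $\cal G$-action acts by a common postcomposition, which also leaves the $\tau_i$ untouched. So these two points are genuinely distinct in $B_{\rm Hae}\cal G$, and the quotient map does not factor through the identification. The $d_n$ face, where the shared codomain moves, is the one your $\cal G$-action wiggle room handles; the outer faces $d_0,\dots,d_{n-1}$, where slots need to drop out, are the problem.

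The paper fixes exactly this by inserting the extra semisimplicial datum $N^{\rm nd}(\bb Z_{\geq 0})$: a simplex of $\cal G\times N^{\rm nd}(\bb Z_{\geq 0})$ records a strictly increasing sequence $k_0<\cdots<k_n$ that tells you in which Milnor slots to place $t_0,\ldots,t_n$, and the $i$th semisimplicial face deletes $k_i$ at the same time it deletes the $i$th morphism, so the comparison becomes a well-defined (in fact bijective) continuous map $\Vert\cal G\times N^{\rm nd}(\bb Z_{\geq 0})\Vert\to B_{\rm Hae}\cal G$. This map is then shown to be a homotopy equivalence by an elementary explicit homotopy inverse built from a partition of unity subordinate to $\{\tau_i>0\}$, and the projection $\Vert\cal G\times N^{\rm nd}(\bb Z_{\geq 0})\Vert\to\Vert\cal G\Vert=B_{\rm MD}\cal G$ is a weak equivalence because fat realization preserves products up to weak equivalence and $\Vert N^{\rm nd}(\bb Z_{\geq 0})\Vert$ is contractible. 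So the zig-zag is forced: there is no single natural map in either direction as you attempted to write down. A secondary concern with your outline, even granting the map, is that the descent step through numerable principal bundles for topological groupoids is considerably harder than it looks (the quotient $E\cal G\to B_{\rm MD}\cal G$ of a fat realization is not automatically a bundle), whereas the paper sidesteps this entirely by exhibiting the homotopy inverse by hand.
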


Our proof of Proposition \ref{prop:comparison} closely relies on
a few lemmas, whose ideas can be traced back to Segal's original writing
\cite[$\S$3]{Segal1968}. Write $N^{{\rm nd}}\pr{\bb Z_{\geq0}}$
for the semisimplicial set of nondegenerate simplices of the nerve
of the poset $\bb Z_{\geq0}$.
\begin{lem}
\label{lem:comparison}Let $\cal G$ be a groupoid internal to $\sf{Top}$.
There is a bijective continuous map
\[
\phi:\norm{\cal G\times N^{{\rm nd}}\pr{\bb Z_{\geq0}}}\to B_{{\rm Hae}}\cal G,
\]
which is a homotopy equivalence.
\end{lem}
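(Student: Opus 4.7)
The plan is to define $\phi$ simplex by simplex using the standard nerve-to-join formula, then verify well-definedness, bijectivity, and the homotopy equivalence in turn.

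For the construction: an $n$-simplex of $\cal G\times N^{\mrm{nd}}\pr{\bb Z_{\geq0}}$ is a pair $\pr{g_{\bullet},\underline i}$, where $g_{\bullet}=\pr{g_{1},\ldots,g_{n}}$ is a composable chain $x_{0}\xrightarrow{g_{1}}\cdots\xrightarrow{g_{n}}x_{n}$ in $\cal G$ and $\underline i=\pr{i_{0}<\cdots<i_{n}}$ is strictly increasing in $\bb Z_{\geq0}$. For $0\le j\le n$, I set $f_{j}:=g_{n}\circ\cdots\circ g_{j+1}:x_{j}\to x_{n}$ (with $f_{n}=\id_{x_{n}}$); these share the codomain $x_{n}$ as Haefliger's construction requires. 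Then I define
\[
\phi\bigl(\pr{g_{\bullet},\underline i},\pr{t_{0},\ldots,t_{n}}\bigr):=\bigoplus_{j=0}^{n}t_{j}f_{j}\in B_{\mrm{Hae}}\cal G,
\]
placing $f_{j}$ at position $i_{j}$. Well-definedness on the fat realization is compatibility with each face operator $d_{k}$, which follows because the relation $f_{j-1}=f_{j}\circ g_{j}$ ensures that the composites $f_{j}$ are invariant under replacing two adjacent arrows by their composite, so the right-hand side is unchanged when one sets $t_{k}=0$ and passes to $d_{k}\pr{g_{\bullet},\underline i}$. Continuity is immediate from the universal property defining the topology on $E_{\mrm{Hae}}\cal G$: each $\tau_{i}\circ\phi$ is a barycentric coordinate, and each $p_{i}\circ\phi$ is a continuous composite of structure maps of $\cal G$.

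For bijectivity, I produce an explicit inverse. Given $\bigoplus_{i}s_{i}h_{i}\in B_{\mrm{Hae}}\cal G$, let $i_{0}<\cdots<i_{n}$ enumerate the support; picking any representative in $E_{\mrm{Hae}}\cal G$ and left-composing by $h_{i_{n}}^{-1}$ normalizes it so that the entry at position $i_{n}$ becomes $\id_{x_{n}}$, where $x_{n}:=\mrm{dom}\pr{h_{i_{n}}}$. Writing $\tilde f_{\ell}:=h_{i_{n}}^{-1}\circ h_{i_{\ell}}$ for $0\le\ell\le n$ and $g_{\ell}:=\tilde f_{\ell}^{-1}\circ\tilde f_{\ell-1}$ for $1\le\ell\le n$ recovers a composable chain; the pair $\bigl(\pr{g_{\bullet},\underline i},\pr{s_{i_{0}},\ldots,s_{i_{n}}}\bigr)$ is then the unique representative in the fat realization with barycentric coordinates in the open simplex, and it maps to the starting element under $\phi$.

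The main obstacle is upgrading this bijective continuous map to a homotopy equivalence. My plan is to filter both spaces by $F_{N}=\{\tau_{i}=0\text{ for }i>N\}$: on the source this is the fat realization of the sub-semisimplicial space using only indices $\le N$, on the target it is the corresponding Haefliger subspace. Both filtration levels carry a stratification indexed by the support $\{i_{0}<\cdots<i_{n}\}\subseteq\{0,\ldots,N\}$, and on each open stratum both sides are identified with $\pr{\mor\cal G}^{\times_{\ob\cal G}n}\times\mathring\Delta^{n}$ with matching subspace topologies, so $\phi$ restricts to a stratum-wise homeomorphism at each level. To pass from this to a genuine homotopy equivalence I exploit the numerable partition of unity $\{\tau_{i}\}_{i\ge0}$ on $B_{\mrm{Hae}}\cal G$ to construct a continuous homotopy inverse of $\phi$; this is a direct adaptation of Milnor's original argument comparing his join model of $EG$ to the geometric realization of the nerve, and naturality in $\cal G$ is manifest at every stage.
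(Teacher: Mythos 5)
Your definition of $\phi$ and the bijectivity/continuity discussion match the paper's proof exactly, and the overall plan for the homotopy equivalence --- construct an explicit homotopy inverse via a partition-of-unity argument in the style of Milnor --- is also the route the paper takes. However, there is a genuine gap in the way you set up that final step.

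You propose to ``exploit the numerable partition of unity $\{\tau_i\}_{i\geq 0}$.'' The coordinate functions $\tau_i$ are continuous and sum to $1$, but they do \emph{not} form a numerable partition of unity: the family of supports $\{\overline{\tau_i^{-1}((0,1])}\}_i$ is not locally finite. Indeed, fix a point $b\in B_{\rm Hae}\mathcal{G}$ with $\tau_0(b)=1$; the points $b_m$ with $\tau_0(b_m)=1-1/m$, $\tau_m(b_m)=1/m$, all other coordinates zero, converge to $b$, so every neighborhood of $b$ meets $\tau_m^{-1}((0,1])$ for infinitely many $m$. If one tries to define the inverse $\psi$ by enumerating the support $k_0<\cdots<k_n$ of $b$ and sending $b$ to the corresponding point in the cell of $\|\mathcal{G}\times N^{\rm nd}(\mathbb{Z}_{\geq 0})\|$ indexed by $\{k_0,\ldots,k_n\}$ with barycentric coordinates $(\tau_{k_0}(b),\ldots,\tau_{k_n}(b))$, then along the sequence $b_m\to b$ above, $\psi(b_m)$ wanders through open cells indexed by $\{0,m\}$ with $m$ unbounded, a sequence that never enters a finite subcomplex of the fat realization and hence cannot converge. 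So $\psi$ defined this way is \emph{not} continuous, and the Milnor-style argument does not go through verbatim with the $\tau_i$.

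What is true is that the cover $\{\tau_j^{-1}((0,1])\}_j$ is numerable, i.e.\ it admits a \emph{subordinate} locally finite partition of unity $\{\psi_j\}$ (this is \cite[13.1.7]{DieckAT}); the paper's proof selects such a $\{\psi_j\}$ and builds $\psi$ from those functions, which is exactly what makes each point of $B_{\rm Hae}\mathcal{G}$ have a neighborhood on which $\psi$ factors through a single cell. You should make that replacement explicit, and then actually write out the straight-line homotopies $h:X\times[0,1]\to X$ and $B_{\rm Hae}\mathcal{G}\times[0,1]\to B_{\rm Hae}\mathcal{G}$ interpolating between the original coordinates and the $\psi_j$-weighted ones, since ``direct adaptation of Milnor's argument'' is exactly where the lemma's content lives. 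Finally, the filtration/stratification paragraph is not wrong but does not help: a bijection that is a homeomorphism on each open stratum need not be a homeomorphism globally, and in any case the paper (correctly) only claims a homotopy equivalence, not a homeomorphism, so this detour can be dropped.
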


\begin{proof}
We will write $X=\norm{\cal G\times N^{{\rm nd}}\pr{\bb Z_{\geq0}}}$.
By definition, a point of $X$ can be represented by a sequence
\[
\pr{\pr{t_{0},\dots,t_{n}},x_{0}\xrightarrow{f_{1}}\cdots\xrightarrow{f_{n}}x_{n},k_{0}<\cdots<k_{n}},
\]
where $\pr{t_{0},\dots,t_{n}}$ is a point of $\abs{\Delta^{n}}$,
$x_{0}\xrightarrow{f_{1}}\cdots\xrightarrow{f_{n}}x_{n}$ is a point
of $\cal G_{n}$, and $k_{0}<\cdots<k_{n}$ is an element of $N^{{\rm nd}}\pr{\bb Z_{\geq0}}_{n}$.
We declare that $\phi$ carries such a point to the point 
\[
\pr{\bigoplus_{0\leq i<k_{0}}0}\oplus t_{0}f_{n}\dots f_{1}\oplus\pr{\bigoplus_{k_{0}<i<k_{1}}0}\oplus t_{1}f_{n}\dots f_{2}\oplus\cdots\oplus t_{n}\id_{x_{n}}\oplus\pr{\bigoplus_{i<k_{n}}0}.
\]
This defines a bijection $\phi:X\to B_{{\rm Hae}}\cal G$, which is
continuous by the definition of the topology on $B\cal G$. 

To show that $\phi$ is a homotopy equivalence, we first construct
a homotopy inverse of $\phi$. Use \cite[13.1.7]{DieckAT} to find
a partition of unity $\pr{\psi_{j}}^{\infty}_{j=0}$ on $B\cal G$
subordinate to the cover $\pr{\tau^{-1}_{j}(0,1]}^{\infty}_{j=0}$.
We define a set map $\psi:B_{{\rm Hae}}\cal G\to X$ as follows: Let
$b=\bigoplus^{\infty}_{i=0}t_{i}g_{i}\in B_{{\rm Hae}}\cal G$ be
a point. Let $k_{0}<\dots<k_{n}$ be the enumeration of the integers
$i\geq0$ such that $t_{i}>0$. Then $\psi\pr b$ is represented by
the point
\[
\pr{\pr{\psi_{k_{0}}\pr bt_{k_{0}},\dots,\psi_{k_{n}}\pr bt_{k_{n}}},x_{0}\xrightarrow{g^{-1}_{k_{1}}g_{k_{0}}}\cdots\xrightarrow{g^{-1}_{k_{n}}g_{k_{n-1}}}x_{n},k_{0}<\dots<k_{n}},
\]
where $x_{i}$ denotes the domain of $g_{k_{i}}$. To check the continuity
of $\psi$, choose a neighborhood $U$ of $b$ such that $\psi_{j}\vert U=0$
for all $j\in\bb Z_{\geq0}\setminus\{k_{0},\dots,k_{n}\}$. (This
is possible because $\opn{supp}\psi_{j}\subset\tau^{-1}_{j}\pr{(0,1]}$
and the supports of $\{\psi_{j}\}_{j}$ are locally finite.) Without
loss of generality, we may assume that $\tau_{k_{0}},\dots,\tau_{k_{n}}$
are positive on $U$. Then on $U$, the map $\psi$ can be written
as a composite
\[
U\to\abs{\Delta^{k}}\times\cal G_{n}\times\{k_{0}<\cdots<k_{n}\}\to X,
\]
each of which is continuous. This proves that $\psi$ is continuous.

We claim that $\psi$ is a homotopy inverse of $\phi$. Define a map
$h:X\times[0,1]\to X$ as follows: Let $x=[\pr{t_{0},\dots,t_{n}},\sigma,k_{0}<\cdots<k_{n}]$
be a point of $X$ and let $0\leq s\leq1$. Then $h\pr{x,s}$ is represented
by
\[
\pr{\pr{\pr{\pr{1-s}+s\psi_{k_{0}}\pr{\phi\pr x}}t_{0},\dots,\pr{\pr{1-s}+s\psi_{k_{0}}\pr{\phi\pr x}}t_{n}},\sigma,k_{0}<\dots<k_{n}}.
\]
This defines a homotopy from $\id_{X}$ to $\psi\phi$. Likewise,
there is a map $B_{{\rm Hae}}\cal G\times[0,1]\to B_{{\rm Hae}}\cal G$
given by
\[
\pr{b,s}=\pr{\bigoplus_{i\ge0}t_{i}g_{i},s}\mapsto\bigoplus_{i\geq0}\pr{\pr{1-s}+s\psi_{i}\pr b}t_{i}g_{i},
\]
which is a homotopy from $\id_{B\cal G}$ to $\phi\psi$. The claim
follows.
\end{proof}

For the next lemma, we write $\Del_{\inj}\subset\Del$ for the subcategory
spanned by the injective poset maps. We also identify sets with discrete
simplicial sets. We remark that, while the nerve of $\Del^{\op}$
is sifted \cite[Lemma 5.5.8.4]{HTT}, the nerve of $\Del^{\op}_{\inj}$
is not. Therefore, homotopy colimits over $\Del^{\op}_{\inj}$ do
not generally commute with products, even up to weak equivalence.
The following lemma provides an instance in which such commutation
does occur.
\begin{lem}
\label{lem:comparison2}Let $\cal C$ be a category internal to $\SS$.
The map
\[
\hocolim_{\Del^{\op}_{\inj}}\pr{\cal C\times N^{\mathrm{nd}}\pr{\bb Z_{\geq0}}}\to\hocolim_{\Del^{\op}_{\inj}}\cal C
\]
is a weak homotopy equivalence.
\end{lem}

\begin{proof}
We will prove the lemma by using specific models of homotopy colimits,
which we now introduce. For a semisimplicial simplicial set $A\in\SS^{\Del^{\op}_{\inj}}$,
we model its homotopy colimit by the coend
\[
\Phi\pr A=\int^{[n]\in\Del_{\inj}}A_{n}\times\Delta^{n}.
\]
To see that $\Phi:\SS^{\Del^{\op}_{\inj}}\to\SS$ is actually a model
of homotopy colimits, observe that its right adjoint $S\mapsto S^{\Delta^{\bullet}}$
is right Quillen for the Reedy model structure by inspection, and
that it is naturally weakly equivalent to the diagonal functor. Since
every semisimplicial simplicial set is Reedy cofibrant, this means
that $\Phi$ is a model of homotopy colimits.

Likewise, since the diagonal of bisimplicial sets carries levelwise
weak homotopy equivalences to equivalences \cite[Chapter IV, Proposition 1.7]{GJ99},
we find that the homotopy colimit of a simplicial simplicial set (i.e.,
bisimplicial set) $B\in\SS^{\Del^{\op}}$ can be computed by its diagonal
(which coincides with the coend $\int^{[n]\in\Del}B_{n}\times\Delta^{n}$).
We note that with this model, every simplicial set is the homotopy
colimit of its simplices, i.e., $S=\hocolim_{[m]\in\Del^{\op}}S_{m}$
for all simplicial set $S$.

Now let $X\in\SS^{\Del^{\op}_{\inj}}$ be a semisimplicial simplicial
set. Using the remark at the end of the previous paragraph, we obtain
a chain of weak homotopy equivalences of simplicial sets
\begin{align*}
\hocolim_{[n]\in\Del^{\op}_{\inj}}\pr{X_{n}} & \cong\hocolim_{[n]\in\Del^{\op}_{\inj}}\hocolim_{[m]\in\Del^{\op}}\pr{X_{n,m}}\\
 & \simeq\hocolim_{[m]\in\Del^{\op}}\hocolim_{[n]\in\Del^{\op}_{\inj}}\pr{X_{n,m}},
\end{align*}
which is natural in $X$. This implies that, given a map $f:X\to Y$
of semisimplicial simplicial sets, if the map
\[
\hocolim_{[n]\in\Del^{\op}_{\inj}}X_{n,m}\to\hocolim_{[n]\in\Del^{\op}_{\inj}}Y_{n,m}
\]
is a weak homotopy equivalence for every $m\geq0$, then $\hocolim_{[n]\in\Del^{\op}_{\inj}}f$
is a weak homotopy equivalence. Substituting $\cal C\times N^{\mathrm{nd}}\pr{\bb Z_{\geq0}}$
for $X$ and $\cal C$ for $Y$, we are reduced to the case where
each $\cal C_{n}$ is a (discrete simplicial) set, i.e., $\cal C=N\pr{\sf C}$
for some ordinary category $\sf C$.

Let $\sf C'\subset\sf C\times\bb Z_{\geq0}$ denote the subcategory
generated by the morphisms $\pr{X,n}\to\pr{Y,m}$ such that $n<m$.
We can identify the semisimplicial set $N\pr{\sf C}\times N^{{\rm nd}}\pr{\bb Z_{\geq0}}$
with the restriction of the nerve $N\pr{\sf C'}$ to $\Del^{\op}_{\inj}$.
Under this identification, our goal is to show that the projection
$\pi:\sf C'\to\sf C$ gives a weak homotopy equivalence
\[
\hocolim_{\Del^{\op}_{\inj}}N\pr{\sf C'}\to\hocolim_{\Del^{\op}_{\inj}}N\pr{\sf C}.
\]
Since the inclusion $\Del^{\op}_{\inj}\hookrightarrow\Del^{\op}$
is homotopy final \cite[Lemma 6.5.3.7]{HTT}, we may replace the indexing
category of homotopy colimits by $\Del^{\op}$. With our model of
$\hocolim_{\Del^{\op}}$, the relevant map will be just the nerve
of $\pi$. Thus, we are reduced to showing that $N\pr{\pi}$ is a
weak homotopy equivalence.

To prove that $N\pr{\pi}$ is a weak homotopy equivalence, we factor
$\pi$ as $\sf C'\xrightarrow{\iota}\sf C\times\bb Z_{\geq0}\xrightarrow{\pi'}\sf C$,
where $\iota$ is the inclusion and $\pi'$ is the projection. Since
$N\pr{\pi'}$ is a weak homotopy equivalence (as $N\pr{\bb Z_{\geq0}}$
is weakly contractible), it will suffice to show that $N\pr{\iota}$
is a weak homotopy equivalence. We prove this by using Quillen's Theorem
A: We must show that, for each $\pr{X,n}\in\sf C\times\bb Z_{\geq0}$,
the fiber product
\[
\sf P=\sf C'\times_{\sf C\times\bb Z_{\geq0}}\pr{\sf C\times\bb Z_{\geq0}}_{\pr{X,n}/}
\]
has weakly contractible nerve. The inclusion $\pr{\sf C'}_{\pr{X,n}/}\hookrightarrow\sf P$
is a left adjoint (with right adjoint given by $\pr{Y,m}\mapsto\pr{X,n}$
if $m=n$ and $\pr{Y,m}\mapsto\pr{Y,m}$ if $m>n$), so it suffices
to show that $N\pr{\pr{\sf C'}_{\pr{X,n}/}}$ is weakly contractible.
This is clear, as $\pr{\sf C'}_{\pr{X,n}}$ has an initial object.
The proof is now complete.
\end{proof}

\begin{proof}
[Proof of Proposition \ref{prop:comparison}]Consider the maps
\[
B_{{\rm MD}}\cal G=\norm{\cal G}\xleftarrow{\phi'}\norm{\cal G\times N^{{\rm nd}}\pr{\bb Z_{\geq0}}}\xrightarrow{\phi}B_{{\rm Hae}}\cal G,
\]
where $\phi$ is the map of Lemma \ref{lem:comparison} and $\phi'$
is induced by the projection. The map $\phi$ is a weak homotopy equivalence
by Lemma \ref{lem:comparison}. We will complete the proof by showing
that $\phi'$ is a weak homotopy equivalence.

Recall that the fat realization of semisimplicial spaces is a model
of homotopy colimits. Indeed, the functor $\norm -:\Top^{\Del^{\op}_{\inj}}\to\Top$
is left Quillen for the Reedy model structure (and $\Top$ carrying
the Quillen model structure), and its right adjoint is weakly equivalent
to the diagonal functor. Since $\norm -$ preserves weak equivalences
on the nose \cite[Theorem 2.2]{semisimplicial},\footnote{In \cite{semisimplicial}, topological spaces are assumed to be compactly
generated, but the proof of the cited theorem goes through for topological
spaces. Alternatively, one can also adopt the argument in \cite[Proposition 14.5.7]{cathtpy}.} this shows that fat realization computes homotopy colimits indexed
by $\Del^{\op}_{\inj}$. 

In light of the discussion in the previous paragraph, we can identify
$\phi'$ with the map
\[
\hocolim_{\Del^{\op}_{\inj}}\pr{\cal G\times N^{{\rm nd}}\pr{\bb Z_{\geq0}}}\to\hocolim_{\Del^{\op}_{\inj}}\cal G.
\]
This is a weak homotopy equivalence by the Quillen equivalence between
$\Top$ and $\SS$ and Lemma \ref{lem:comparison2}, and we are done.
\end{proof}

\section{Some Results on $\infty$-Categories}

In this section, we record some general results on $\infty$-categories.
Results in the first three subsections (\ref{subsec:path_fibrations},
\ref{subsec:Kan}, and \ref{subsec:t-st}) are well-known, but their
references are hard to find. The content of the final subsection (\ref{subsec:zero_ext})
is new, at least to the author's knowledge.

\subsection{\label{subsec:path_fibrations}Mapping Spaces of Arrow Categories}

In this section, we give a proof of the following (folklore) result,
and then use it to prove a few of its corollaries.
\begin{prop}
\label{prop:map_arrow}Let $\cal C$ be an $\infty$-category. There
is a pullback square 
\[\begin{tikzcd}
	{\operatorname{Fun}(\Delta^1,\mathcal{C})(f,g)} & {\mathcal{C}(X_0,Y_0)} \\
	{\mathcal{C}(X_1,Y_1)} & {\mathcal{C}(X_0,Y_1)}
	\arrow[from=1-1, to=1-2]
	\arrow[from=1-1, to=2-1]
	\arrow["{g_\ast}", from=1-2, to=2-2]
	\arrow["{f^*}"', from=2-1, to=2-2]
\end{tikzcd}\]in $\cal S$, natural in $\pr{f:X_{0}\to X_{1},g:Y_{0}\to Y_{1}}\in\Fun\pr{\Delta^{1},\cal C}^{\op}\times\Fun\pr{\Delta^{1},\cal C}$.
\end{prop}

\begin{proof}
Recall that for an $\infty$-category $\cal X$, the \textit{twisted
arrow construction} produces a left fibration $\Tw\pr{\cal X}\to\cal X$
classifying the hom-functor $\cal X\pr{-,-}:\cal X^{\op}\times\cal X\to\cal S$
\cite[\href{https://kerodon.net/tag/03JF}{Tag 03JF}]{kerodon}. According
to \cite[Theorem 4.4]{AS23}, the square 
\[\begin{tikzcd}
	{\operatorname{Tw}(\operatorname{Fun}(\Delta^1,\mathcal{C}))} & {\operatorname{Fun}(\operatorname{Tw}(\Delta^1),\operatorname{Tw}(\mathcal{C}))} \\
	{\operatorname{Fun}(\Delta^1,\mathcal{C})^{\mathrm{op}}\times \operatorname{Fun}(\Delta^1,\mathcal{C})} & {\operatorname{Fun}(\operatorname{Tw}(\Delta^1),\mathcal{C}^\mathrm{op}\times \mathcal{C})}
	\arrow[from=1-1, to=1-2]
	\arrow[from=1-1, to=2-1]
	\arrow[from=1-2, to=2-2]
	\arrow[from=2-1, to=2-2]
\end{tikzcd}\]is homotopy cartesian. We now observe that $\Tw\pr{\Delta^{1}}$ is
the poset $00\to01\ot11$, i.e., a walking cospan. Consequently, the
claim follows from the following assertion:
\begin{itemize}
\item [($\ast$)]Let $p:\cal E\to\cal B$ be a left fibration classified
by a functor $f:\cal B\to\cal S$, and let $\cal X$ be an $\infty$-category.
The left fibration $\Fun\pr{\cal X,p}$ is classified by the composite
$\Fun\pr{\cal X,\cal B}\xrightarrow{\Fun\pr{\cal X,f}}\Fun\pr{\cal X,\cal S}\xrightarrow{\lim}\cal S$.
\end{itemize}

To prove ($\ast$), it suffices to consider the case where $p=p_{\univ}:\cal S_{\ast}\to\cal S$
is the universal left fibration. In this case, consider the following
diagram 
\[\begin{tikzcd}
	{\operatorname{Fun}(\mathcal{X},\mathcal{S}_\ast)} & {\operatorname{Fun}(\mathcal{X}^\triangleleft,\mathcal{S}_\ast)} & {\mathcal{S}_\ast} \\
	{\operatorname{Fun}(\mathcal{X},\mathcal{S})} & {\operatorname{Fun}(\mathcal{X}^\triangleleft,\mathcal{S})} & {\mathcal{S}.}
	\arrow["{\mathrm{Ran}}", from=1-1, to=1-2]
	\arrow[from=1-1, to=2-1]
	\arrow["{\mathrm{ev}_\infty}", from=1-2, to=1-3]
	\arrow[from=1-2, to=2-2]
	\arrow["{p_{\mathrm{univ}}}", from=1-3, to=2-3]
	\arrow["{\mathrm{Ran}}"', from=2-1, to=2-2]
	\arrow["{\mathrm{ev}_\infty}"', from=2-2, to=2-3]
\end{tikzcd}\]The left-hand square commutes up to natural equivalence because $p_{\univ}$
preserves limits. Combining this with the fact that the right Kan
extension functors are fully faithful \cite[Proposition 4.3.2.15]{HTT},
we find that the left-hand square is a pullback square of possibly
large $\infty$-categories. Also, since the inclusion $\{\infty\}\hookrightarrow\cal X^{\lcone}$
is left anodyne and $p_{\univ}$ is a left fibration, \cite[Corollary 2.1.2.7]{HTT}
implies that the right-hand square is also a pullback. The proof is
now complete, as the bottom horizontal composite is nothing but the
limit functor.
\end{proof}

\begin{cor}
\label{cor:map_slice}Let $\cal C$ be an $\infty$-category. For
each object $X\in\cal C$, there is a pullback square 
\[\begin{tikzcd}
	{\mathcal{C}_{/X}(a,b)} & {\mathcal{C}(A,B)} \\
	\ast & {\mathcal{C}(A,X)}
	\arrow[from=1-1, to=1-2]
	\arrow[from=1-1, to=2-1]
	\arrow["{b_\ast}", from=1-2, to=2-2]
	\arrow["a"', from=2-1, to=2-2]
\end{tikzcd}\]in $\cal S$, natural in $\pr{a:A\to X,b:B\to X}\in\pr{\cal C_{/X}}^{\op}\times\cal C_{/X}$.
\end{cor}

\begin{proof}
This follows from Proposition \ref{prop:map_arrow} and the equivalence
$\cal C_{/X}\xrightarrow{\simeq}\{X\}\times_{\Fun\pr{\{0\},\cal C}}\Fun\pr{\Delta^{1},\cal C}$
of \cite[Proposition 4.2.1.5]{HTT}.
\end{proof}

We will need the following two results in the main body of the paper:
\begin{prop}
\label{prop:final_map_into_slice}Let $\cal C$ be an $\infty$-category,
let $\cal C_{0}\subset\cal C$ be a full subcategory, let $X\in\cal C$
be an object, and let $\cal I$ be a small category. Let $f:\cal I^{\rcone}\to\cal C$
be a functor which carries $\cal I$ into $\cal C_{0}$ and the cone
point to $X$. The following conditions are equivalent:
\begin{enumerate}
\item The functor $\cal I\to\cal C^{0}_{/X}$ is final.
\item For each object $C\in\cal C_{0}$ admitting a morphism $\phi:C\to X$
in $\cal C$, the map
\[
\colim_{I\in\cal I}\cal C\pr{C,f\pr I}\to\cal C\pr{C,X}
\]
is an equivalence of $\infty$-groupoids.
\end{enumerate}
\end{prop}

\begin{proof}
Condition (1) is equivalent to the condition that, for each object
$\pr{c:C\to X}\in\cal C^{0}_{/X}$, the $\infty$-category $\pr{\cal C^{0}_{/X}}_{c/}\times_{\cal C^{0}}\cal I$
is weakly contractible. By \cite[Corollary 3.3.4.6]{HTT} and \cite[Theorem 4.2.4.1]{HTT},
the $\infty$-groupoid $\pr{\cal C^{0}_{/X}}_{c/}\times_{\cal C^{0}}\cal I$
has the homotopy type of $\colim_{I\in\cal I}\pr{\cal C^{0}_{/X}}_{c/}\pr{c,f\pr I}\in\cal S$.
Since colimits in $\cal S$ are universal \cite[Lemma 6.1.3.14]{HTT},
it follows from Corollary \ref{cor:map_slice} that $\colim_{I\in\cal I}\cal C_{/X}\pr{c,f\pr I}$
has the weak homotopy type of the fiber of the map
\[
\colim_{I\in\cal I}\cal C\pr{C,f\pr I}\to\cal C\pr{C,X}
\]
over $\alpha$. The claim follows.
\end{proof}

For later reference, we record a variant of Proposition \ref{prop:final_map_into_slice}. 
\begin{prop}
\label{prop:final_map_into_slice_2}Let $\cal C$ be an $\infty$-category,
let $\cal C_{0}\subset\cal C$ be a full subcategory, let $X\in\cal C$
be an object, and let $\cal I$ be an $\infty$-category. Let $f:\cal I^{\rcone}\to\cal C$
be a functor which carries $\cal I$ into $\cal C^{\simeq}_{0}$ and
the cone point to $X$. The following conditions are equivalent:
\begin{enumerate}
\item The functor $f':\cal I\to\pr{\pr{\cal C_{0}}_{/X}}^{\simeq}$ which
is adjoint to $f$ is a weak homotopy equivalence.
\item For each object $C\in\cal C_{0}$ which admits a morphism $C\to X$
in $\cal C$, the map
\[
\colim_{I\in\cal I}\cal C^{\simeq}\pr{C,f\pr I}\to\cal C\pr{C,X}
\]
is an equivalence of $\infty$-groupoids.
\end{enumerate}
\end{prop}

The proof of Proposition \ref{prop:final_map_into_slice_2} is nearly
identical to that of Proposition \ref{prop:final_map_into_slice},
noting that for a Kan complex $X$, a map $K\to X$ of simplicial
sets is final if and only if it is a weak homotopy equivalence. Details
are left to the reader.

\subsection{\label{subsec:Kan}Kan Extension}

In this subsection, we summarize a few key facts on Kan extensions.
The starting point is the following lemma.
\begin{lem}
\cite[\href{https://kerodon.net/tag/02ZM}{Tag 02ZM}]{kerodon}\label{lem:02ZM}
Let $\cal C,\cal D,$ and $\cal E$ be $\infty$-categories. Let $p:\cal C\to\cal D$,
$f:\cal C\to\cal E$, and $l:\cal D\to\cal E$ be functors, and let
$\alpha:f\to lp$ be a natural transformation. Suppose that $p$ is
a cocartesian fibration. The following conditions are equivalent:
\begin{enumerate}
\item The natural transformation $\alpha$ exhibits $l$ as a left Kan extension
of $f$ along $p$.
\item For each object $D\in\cal D$, the map $\alpha$ restricts to a colimit
diagram $f\vert\cal C_{D}\to\u{lp\pr D}$, where we set $\cal C_{D}=\cal C\times_{\cal D}\{D\}$.
\end{enumerate}
\end{lem}

\begin{rem}
\label{rem:02ZM}Informally, Lemma \ref{lem:02ZM} says that a left
Kan extension $\Lan_{p}f$ of $f$ along $p$, if it exists, is given
by the formula
\[
\Lan_{p}f\pr D\simeq\colim_{C\in\cal C_{D}}f\pr C.
\]
This informal notation is quite useful, and we will use it frequently.
Thus, for instance, if we say ``define a functor $g$ by $g\pr D=\colim_{C\in\cal C_{D}}f\pr C$,''
what we really mean is that ``we define $g$ to be a left Kan extension
of $f$ along $p$.'' (In the applications we have in mind, $\cal D$
will be the nerve of an ordinary category $\cal D_{0}$, and $p$
will be the relative nerve of an ordinary functor $\cal D_{0}\to\SS$.)
\end{rem}

Lemma \ref{lem:02ZM} has the following consequences.
\begin{prop}
\label{prop:fiberwise_finality}Let 
\[\begin{tikzcd}
	{\mathcal{A}} && {\mathcal{B}} \\
	& {\mathcal{C}}
	\arrow["f", from=1-1, to=1-3]
	\arrow["q", from=1-3, to=2-2]
	\arrow["p"', from=1-1, to=2-2]
\end{tikzcd}\]be a commutative diagram of $\infty$-categories. Suppose that $p$
and $q$ are cocartesian fibrations and that, for each object $C\in\cal C$,
the map
\[
\cal A\times_{\cal C}\{C\}=\cal A_{C}\to\cal B_{C}
\]
induced by $f$ is final. Then $f$ is final.
\end{prop}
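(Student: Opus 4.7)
The plan is to verify $f$ is final via the characterization of \cite[Proposition 4.1.1.8]{HTT}: it suffices to show that for every $\infty$-category $\cal D$ admitting small colimits and every functor $g:\cal B\to\cal D$, the canonical comparison map $\colim_{\cal A}\pr{gf}\to\colim_{\cal B}g$ is an equivalence. (The reverse implication recovers finality in full generality, either directly from that proposition or via a Yoneda-style argument applied to $\cal D=\widehat{\cal S}$ and $g=\Map_{\cal B}\pr{b,-}$, which yields the slice-contractibility criterion.)

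Fixing such $g$, the strategy is to reduce the comparison to a pointwise equivalence on $\cal C$ and then take $\colim_{\cal C}$. Since $p$ and $q$ are cocartesian fibrations and $\cal D$ is cocomplete, the left Kan extensions $q_!g:\cal C\to\cal D$ and $p_!\pr{gf}:\cal C\to\cal D$ both exist, and by transitivity of Kan extensions $\colim_{\cal B}g\simeq\colim_{\cal C}q_!g$ and $\colim_{\cal A}\pr{gf}\simeq\colim_{\cal C}p_!\pr{gf}$. Applying Lemma \ref{lem:Kan_ext_along_ccfib} to $q$ and to $p$ furnishes pointwise formulas $\pr{q_!g}\pr C\simeq\colim_{\cal B_{C}}\pr{g\vert\cal B_{C}}$ and $\pr{p_!\pr{gf}}\pr C\simeq\colim_{\cal A_{C}}\pr{g\circ f\vert\cal A_{C}}$ for each $C\in\cal C$.

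Next I would construct the comparison. Whiskering the unit $g\to\pr{q_!g}\circ q$ with $f$ yields a natural transformation $gf\to\pr{q_!g}\circ p$, and the universal property of $p_!$ produces a natural transformation $\alpha:p_!\pr{gf}\to q_!g$ over $\cal C$. Unwinding the identifications supplied by Lemma \ref{lem:Kan_ext_along_ccfib}, the component $\alpha_{C}$ is identified with the canonical map
\[
\colim_{\cal A_{C}}\pr{g\circ f\vert\cal A_{C}}\to\colim_{\cal B_{C}}\pr{g\vert\cal B_{C}}
\]
induced by precomposition with $f\vert\cal A_{C}:\cal A_{C}\to\cal B_{C}$. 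By the fiberwise finality hypothesis, $\alpha_{C}$ is an equivalence for every $C\in\cal C$, so $\alpha$ is a pointwise equivalence, hence an equivalence. Applying $\colim_{\cal C}$ to $\alpha$ then yields the required equivalence $\colim_{\cal A}\pr{gf}\simeq\colim_{\cal B}g$.

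The main obstacle is the verification that $\alpha_{C}$ agrees with the stated map of colimits on fibers; this amounts to a careful unwinding of the universal properties of $p_!$ and $q_!$ in conjunction with the pointwise formulas from Lemma \ref{lem:Kan_ext_along_ccfib}, but it involves no deeper ingredient than the functoriality of the colimit in a diagram.
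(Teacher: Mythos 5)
Your proposal is correct and follows essentially the same route as the paper's proof: both arguments reduce finality to the statement that precomposition with $f$ preserves colimits, factor the colimit over $\cal B$ (resp.\ over $\cal A$) through the left Kan extension along $q$ (resp.\ along $p$), and then invoke Lemma \ref{lem:Kan_ext_along_ccfib} together with the fiberwise finality hypothesis to identify the two pointwise. The only cosmetic difference is in presentation: the paper picks one object $l = q_!g$ and shows directly that $\alpha f$ exhibits it as the left Kan extension of $gf$ along $p$, whereas you construct the canonical comparison $p_!(gf)\to q_!g$ and verify it is a pointwise equivalence — these amount to the same check, and the paper's phrasing simply sidesteps the "unwind the universal property of the comparison map" step you flag as the remaining obstacle. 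The paper also takes $\cal D=\cal S$ rather than a general cocomplete $\cal D$, but that is all it needs, since the slice-contractibility criterion reads off from the $\cal S$-valued case.
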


\begin{proof}
We wish to show that, for each object $B\in\cal B$, the $\infty$-category
$\cal A_{B/}$ is weakly contractible. According to \cite[Corollary 3.3.4.6]{HTT},
the simplicial set $\cal A_{B/}$ has the weak homotopy type of the
colimit of the diagram $\cal A\xrightarrow{f}\cal B\xrightarrow{F_{B}}\cal S$,
where $F_{B}$ is the functor corepresented by $B$. By the same token,
$\cal B_{B/}$ has the weak homotopy type of the colimit of $F_{B}$.
Therefore, to prove that $\cal A_{B/}$ is weakly contractible, it
suffices to prove that precomposing $f$ does not change the colimit
of any diagram $\cal B\to\cal S$. In other words, it suffices to
show that, for each diagram $g:\cal B\to\cal S$, the map
\[
\cal S^{g/}\to\cal S^{gf/}
\]
is a covariant equivalence over $\cal S$. 

Find a functor $l:\cal C\to\cal S$ and a natural transformation $\alpha:g\to lq$
which exhibits $l$ as a left Kan extension of $g$ along $q$, and
find also a Kan complex $K$ and a natural transformation $\beta:l\to\underline{K}$
which exhibits $K$ as a colimit of $l$. Then a composite natural
transformation
\[
\theta:g\xrightarrow{\alpha}lq\xrightarrow{\beta q}\underline{K}
\]
exhibits $K$ as a colimit of $g$ (by the transitivity of Kan extensions\cite[\href{https://kerodon.net/tag/031M}{Tag 031M}]{kerodon}),
and we wish to show that the natural transformation $\theta f:gf\to\underline{K}$
exhibits $K$ as a colimit of $gf$. For this, it suffices to show
that the natural transformation $\alpha f:gf\to lp$ exhibits $l$
as a left Kan extension of $gf$ along $p$. By Lemma \ref{lem:02ZM},
we must show that, for each $C\in\cal C$, the natural transformation
\[
\alpha f\vert\cal A_{C}:gf\vert\cal A_{C}\to\underline{l\pr C}
\]
exhibits $l\pr C$ as a colimit of $gf\vert\cal A_{C}$. Since $f\vert\cal A_{C}:\cal A_{C}\to\cal B_{C}$
is final, we are reduced to showing that the natural transformation
\[
\alpha\vert\cal B_{C}:g\vert\cal B_{C}\to\underline{l\pr C}
\]
is a colimit diagram. This follows from Lemma \ref{lem:02ZM}.
\end{proof}

\begin{prop}
\label{prop:dec_colim}Let $\overline{p}:\overline{\cal E}\to\cal B^{\rcone}$
be a cocartesian fibration of $\infty$-categories classifying a colimit
diagram in $\cal B^{\rcone}\to\Cat_{\infty}$, and let $f:\overline{\cal E}\to\cal C$
be a diagram that carries $\overline{p}$-cocartesian edges to equivalences.
If $f$ admits a left Kan extension $F:\cal B^{\rcone}\to\cal C$
along $p$, then $F$ is a colimit diagram.
\end{prop}

\begin{rem}
In the situation of Proposition \ref{prop:dec_colim}, the functor
$F$ is given by $F\pr B=\colim_{E\in\cal E_{B}}f\pr E$ (Lemma \ref{lem:02ZM}).
Thus, informally, the proposition says that
\[
\colim_{E\in\colim_{B\in\cal B}\cal E_{B}}f\pr E\simeq\colim_{B\in\cal B}\colim_{E\in\cal E_{B}}f\pr E.
\]
\end{rem}

\begin{rem}
\label{rem:dec_lim}There is an obvious dual version of Proposition
\ref{prop:dec_colim}: Let $\overline{p}:\overline{\cal E}\to\pr{\cal B^{\rcone}}^{\op}$
is a cartesian fibration of $\infty$-categories classifying a colimit
diagram $\cal B^{\rcone}\to\Cat_{\infty}$, and let $f:\overline{\cal E}\to\cal C$
be a functor carrying $\overline{p}$-cartesian edges to equivalences.
If $f$ has a left Kan extension $F:\pr{\cal B^{\rcone}}^{\op}\to\cal C$
along $p$, then $F$ is a limit diagram. 
\end{rem}

\begin{proof}
[Proof of Proposition \ref{prop:dec_colim}]Consider the following
diagram and natural transformations:
\[\begin{tikzcd}
	{\mathcal{E}} && {\overline{\mathcal{E}}} && {\mathcal{C}} \\
	{\mathcal{B}} && {\mathcal{B}^\triangleright} \\
	& {\{\infty\}}
	\arrow["j", from=1-1, to=1-3]
	\arrow["p"', from=1-1, to=2-1]
	\arrow[""{name=0, anchor=center, inner sep=0}, "f", from=1-3, to=1-5]
	\arrow["{\overline{p}}"', from=1-3, to=2-3]
	\arrow[""{name=1, anchor=center, inner sep=0}, "i", from=2-1, to=2-3]
	\arrow[from=2-1, to=3-2]
	\arrow[""{name=2, anchor=center, inner sep=0}, "F"', from=2-3, to=1-5]
	\arrow["k"', from=3-2, to=2-3]
	\arrow["\alpha"', between={0.2}{0.8}, Rightarrow, from=0, to=2]
	\arrow["\beta"{description}, between={0.2}{1}, Rightarrow, from=1, to=3-2]
\end{tikzcd}\]Here the maps $i,j,k$ are inclusions, $p$ is the pullback of $\overline{p}$
along $i$, and $\alpha$ is the structure natural transformation
for a left Kan extension. Our goal is to show that $F\beta$ is a
colimit diagram. Since $\alpha j$ exhibits $Fi$ as a left Kan extension
of $fj$ along $p$ (by Lemma \ref{lem:02ZM}), we only need to show
that the composite $2$-cell
\[
\theta:fj\xrightarrow{\alpha j}F\overline{p}j=Fip\xrightarrow{F\beta p}\underline{F\pr{\infty}}
\]
is a colimit diagram. 

Let $j_{\infty}:\overline{\cal E}_{\infty}\to\overline{\cal E}$ denote
the inclusion, and let $\gamma:j\to j_{\infty}r$ denote the cocartesian
natural transformation covering $\beta$ (i.e., $\beta p=\overline{p}\gamma$),
where $r:\cal E\to\overline{\cal E}_{\infty}$ is the codomain restriction
of $\gamma\vert\cal E\times\{1\}$. The map $\theta$ is then homotopic
to the pasting of the $2$-cells
\[\begin{tikzcd}
	& {\overline{\mathcal{E}_{\infty}}} \\
	{\mathcal{E}} && {\overline{\mathcal{E}}} && {\mathcal{C}} \\
	{\mathcal{B}} && {\mathcal{B}^\triangleright}
	\arrow["{j_\infty}", from=1-2, to=2-3]
	\arrow["r", from=2-1, to=1-2]
	\arrow[""{name=0, anchor=center, inner sep=0}, "j"', from=2-1, to=2-3]
	\arrow["p"', from=2-1, to=3-1]
	\arrow[""{name=1, anchor=center, inner sep=0}, "f", from=2-3, to=2-5]
	\arrow["{\overline{p}}"', from=2-3, to=3-3]
	\arrow["i", from=3-1, to=3-3]
	\arrow[""{name=2, anchor=center, inner sep=0}, "F"', from=3-3, to=2-5]
	\arrow["\gamma"', between={0.2}{1}, Rightarrow, from=0, to=1-2]
	\arrow["\alpha"', between={0.2}{0.8}, Rightarrow, from=1, to=2]
\end{tikzcd}\]which can be written as a composite
\[
fj\xrightarrow{f\gamma}fj_{\infty}r\xrightarrow{\alpha j_{\infty}r}F\overline{p}j_{\infty}r.
\]
Since $f$ carries cocartesian morphisms to equivalences, the map
$f\gamma$ is an equivalence. Therefore, it will suffice to show that
$\alpha j_{\infty}r$ is a colimit diagram. Since $\overline{p}$
classifies a colimit diagram, \cite[Lemma 3.3.4.1 and Proposition 3.3.4.2]{HTT}
show that the map $r$ is a localization at $p$-cocartesian morphisms
(in the sense of Definition \ref{subsec:loc}). Since localizations
are final \cite[\href{https://kerodon.net/tag/02N9}{Tag 02N9}]{kerodon},
we are reduced to showing that $\alpha j_{\infty}$ is a colimit diagram.
This follows from Lemma \ref{lem:02ZM}, and we are done.
\end{proof}

\subsection{\label{subsec:t-st}t-Structure and Homotopy Groups}

Let $\cal C$ be a stable $\infty$-category. Recall that a (homological)\textbf{
t-structure} on $\cal C$ is a collection of full subcategories $\{\cal C_{\geq n}\}_{n\in\bb Z}$
and $\{\cal C_{\leq n}\}_{n\in\bb Z}$ of $\cal C$ that determine
the ordinary t-structure on the triangulated category $\ho\pr{\cal C}$
\cite[1.2.1]{HA}. If $\cal C$ is equipped with a t-structure, its
\textbf{heart} $\cal C^{\heartsuit}=\cal C_{\leq0}\cap\cal C_{\geq0}$
is equivalent to its homotopy category, which is an abelian category.
For each $n\geq0$, there is an \textbf{$n$th homotopy group} functor
$\pi_{n}:\cal C\to\cal C^{\heartsuit}$ \cite[Definition 1.2.1.11]{HA},
and every fiber sequence $X\to Y\to Z$ in $\cal C$ gives rise to
a long exact sequence 
\[
\cdots\to\pi_{n+1}\pr Z\to\pi_{n}\pr X\to\pi_{n}\pr Y\to\pi_{n}\pr Z\to\pi_{n-1}\pr X\to\cdots
\]
in $\cal C^{\heartsuit}$ \cite[IV.4. Theorem 11]{GM03}. In this
subsection, we record some results on homotopy groups that we will
use in the main body of the paper.

The datum of a t-structure give us a measurement of the failure of
a cube to be cartesian. The following sequences of results, up to
Corollary \ref{cor:GW99_1.22}, give an estimate of this sort using
estimates of smaller cubes. These results and arguments are due to
Goodwillie, but we record them for completeness.
\begin{defn}
Let $\cal C$ be a stable $\infty$-category with a t-structure, and
let $k$ be an integer. An object $X\in\cal C$ is said to be \textbf{$k$-connected}
if it belongs to $\cal C_{\geq k+1}$, or equivalently, $\pi_{i}\pr X\simeq0$
for all $i\leq k$. A map $f:X\to Y$ in $\cal C$ is said to be \textbf{$k$-connected}
if its fiber is $\pr{k-1}$-connected. Given a finite set $S$ and
an $S$-cube $X$ in $\cal C$, we say that $X$ is \textbf{$k$-cartesian}
if the map
\[
X\pr{\emptyset}\to\lim\pr{X\vert\cal P_{0}\pr S}
\]
is $k$-connected.
\end{defn}

\begin{prop}
\cite[Proposition 1.5]{Goo91}\label{prop:1.5} Let $\cal C$ be a
stable $\infty$-category with a t-structure, let $k$ be an integer,
and let $X\xrightarrow{f}Y\xrightarrow{g}Z$ be a pair of maps in
$\cal C$.
\begin{enumerate}
\item If $f$ and $g$ are $k$-connected, so is $gf$.
\item If $g$ is $\pr{k+1}$-connected and $gf$ is $k$-connected, then
$f$ is $k$-connected.
\item If $gf$ is $k$-connected and $f$ is $\pr{k-1}$-connected, then
$g$ is $k$-connected.
\end{enumerate}
\end{prop}

\begin{proof}
This follows from the long exact sequence associated with the fiber
sequence $\fib\pr f\to\fib\pr{gf}\to\fib\pr g$.
\end{proof}

\begin{prop}
\cite[Proposition 1.6]{Goo91}\label{prop:1.6} Let $\cal C$ be a
stable $\infty$-category with a t-structure, let $S$ be a finite
set, and let $F$ be an $\pr{S\amalg\{\ast\}}$-cube in $\cal C$.
Define $S$-cubes $X,Y$ by $X=F\vert\cal P\pr S$ and $Y=F\pr{-\cup\{\ast\}}$.

\begin{enumerate}[label=(\roman*)]

\item If $Y$ is $k$-cartesian and $F$ is $k$-cartesian, then
$X$ is $k$-cartesian.

\item If $X$ is $k$-cartesian and $Y$ is $\pr{k+1}$-cartesian,
then $F$ is $k$-cartesian.

\end{enumerate}
\end{prop}

\begin{proof}
We will apply Proposition \ref{prop:1.5} to the maps
\[
X\pr{\emptyset}\xrightarrow{f}\lim\pr{F\vert\cal P_{0}\pr{S\amalg\{\ast\}}}\xrightarrow{g}\lim\pr{X\vert\cal P_{0}\pr S}.
\]
To obtain the conclusion, it will suffice to show that $g$ is a pullback
of the map $Y\pr{\emptyset}\to\lim\pr{Y\vert\cal P_{0}\pr S}$. This
follows from Remark \ref{rem:dec_lim}, as the square 
\[\begin{tikzcd}
	{(\mathcal{P}(S)\times [1])\setminus \{(\emptyset ,0)\}} & {\mathcal{P}_0(S)\times [1]} \\
	{\mathcal{P}(S)\times \{1\}} & {\mathcal{P}_0(S)\times\{1\}}
	\arrow[from=1-2, to=1-1]
	\arrow[from=2-1, to=1-1]
	\arrow[from=2-2, to=1-2]
	\arrow[from=2-2, to=2-1]
\end{tikzcd}\]in a pushout in $\Cat_{\infty}$ (because the Joyal model structure
is left proper).
\end{proof}

\begin{prop}
\cite[Proposition 1.20]{Goo91}\label{prop:1.20} Let $\cal C$ be
a stable $\infty$-category with a t-structure, let $S$ and $T$
be finite sets, and let $X:\cal P\pr{S\amalg T}\to\cal D$ be an $S\amalg T$-cube.
Suppose that:
\begin{enumerate}
\item $X$ is $k$-cartesian.
\item For each $U\in\cal P_{0}\pr S$, the $T$-cube $X\pr{U\amalg-}$ is
$\pr{k+\abs U-1}$-cartesian. 
\end{enumerate}
Then the $T$-cube $X\pr{\emptyset\amalg-}$ is $k$-cartesian.
\end{prop}

\begin{proof}
We prove the claim by induction on $n=\abs S$. If $n=0$, there is
nothing to prove. For the inductive step, fix an element $\ast\in S$,
and set $S'=S\setminus\{\ast\}$. Applying the induction hypothesis
to the $S'\amalg\pr{\{\ast\}\amalg T}$-cube $X$, we deduce that
the $\pr{\{\ast\}\amalg T}$-cube $X\pr{\emptyset\amalg-\amalg-}$
is $k$-cartesian. Also, by condition (2), the $T$-cube $X\pr{\emptyset\amalg\{\ast\}\amalg-}$
is $k$-cartesian. It follows from part (i) of Proposition \ref{prop:1.6}
that the $T$-cube $X\pr{\emptyset\amalg\emptyset\amalg-}$ is $k$-cartesian,
as desired.
\end{proof}

\begin{cor}
\cite[Proposition 1.22]{Goo91}\label{cor:GW99_1.22} Let $\cal C$
be a stable $\infty$-category with a t-structure, let $S$ be a finite
set, and let $X:\cal P_{0}\pr S\times[1]\to\cal D$ be a diagram.
Suppose that, for each $U\in\cal P_{0}\pr S$, the map
\[
X\pr{U,0}\to X\pr{U,1}
\]
is $k_{U}$-connected. Then the map 
\[
\lim_{U\in\cal P_{0}\pr S}X\pr{U,0}\to\lim_{U\in\cal P_{0}\pr S}X\pr{U,1}
\]
is $\min\{1-\abs U+k_{U}\mid U\in\cal P_{0}\pr S\}$-connected.
\end{cor}

\begin{proof}
Let $\overline{X}:\cal P\pr S\times[1]\to\cal D$ denote the right
Kan extension of $X$. Since the inclusion $\cal P\pr S\to\cal P\pr{S\amalg\{\ast\}}$
is initial, the $\pr{S\amalg\{\ast\}}$-cube $\overline{X}$ is cartesian.
So the claim follows from applying Proposition \ref{prop:1.20} to
$\overline{X}$.
\end{proof}

We conclude this subsection with two more miscellaneous results.
\begin{lem}
[Milnor exact sequence]\label{lem:Milnor}Let $\cal C$ be a stable
$\infty$-category with a t-structure. Suppose that $\cal C$ has
countable products and that the functor $\pi_{0}:\cal C\to\cal C^{\heartsuit}$
preserves them. Given a diagram $X_{\bullet}:\bb Z\to\cal C$ depicted
as
\[
\cdots\xrightarrow{p_{2}}X_{2}\xrightarrow{p_{1}}X_{1}\xrightarrow{p_{0}}X_{0}\xrightarrow{p_{-1}}\cdots,
\]
there is an exact sequence
\[
0\to\lim^{1}_{i}\pi_{n+1}\pr{X_{i}}\to\pi_{n}\pr{\lim_{i}X_{i}}\to\lim_{i}\pi_{n}\pr{X_{i}}\to0
\]
in the abelian category $\cal C^{\heartsuit}\simeq\ho\pr{\cal C^{\heartsuit}}$,
which is natural in $X_{\bullet}$.
\end{lem}

\begin{proof}
Set $X=\lim_{i}X_{i}$. Since $\pi_{0}$ carries fiber sequences to
exact sequences \cite[IV.4., Theorem 11]{GM03}, it suffices to prove
the following two assertions:
\begin{enumerate}
\item $X$ is naturally equivalent to equalizer of the maps $\id,p:\prod_{i}X_{i}\rr\prod_{i}X_{i}$,
where $p$ is determined by the maps $\{p_{i}\}_{i\geq0}$.
\item Given a pair of maps $f,g:A\rr B$ in $\cal C$, the equalizer of
$f$ and $g$ is equivalent to naturally equivalent to the kernel
of $f-g$.
\end{enumerate}
Indeed, assertions (1) and (2) will give us a fiber sequence $X\to\prod_{i}X_{i}\xrightarrow{\id-p}\prod_{i}X_{i}$,
and then the associated long exact sequence of homotopy groups gives
us the desired conclusion.

To prove (1), let $\cal B=\{0\stackrel[b]{a}{\rr}1\}$ denote (the
nerve of) the free walking parallel arrows, and let $\cal E$ denote
the Grothendieck construction of the functor $\cal B\to\sf{Cat}$
corresponding to the maps $\id,-1:\bb Z\rr\bb Z$. Thus we can depict
$\cal E$ as as
\[\begin{tikzcd}
	\vdots & {(1,i-1)} \\
	{(0,i)} & {(1,i)} \\
	{(0,i+1)} & \vdots,
	\arrow[from=1-1, to=1-2]
	\arrow["{b_{i-1}}"{description}, from=2-1, to=1-2]
	\arrow["{a_i}"{description}, from=2-1, to=2-2]
	\arrow["{b_{i}}"{description}, from=3-1, to=2-2]
	\arrow[from=3-1, to=3-2]
\end{tikzcd}\]and there is a projection $\cal E\to\cal B$ carrying $a_{i}$ to
$a$ and $b_{i}$ to $b$. The assignment $\pr{n,i}\mapsto n$ determines
a functor $\pi:\cal E\to\bb Z$. Using the fact that the classifying
space of $[1]$ is contractible, we deduce that $\pi$ is a localization
at the maps $\{a_{i}\}_{i\geq0}$. In particular, $\pi$ is initial
\cite[\href{https://kerodon.net/tag/02N9}{Tag 02N9}]{kerodon}, so
\[
\lim_{i}X_{i}\simeq\lim\pr{\cal E\xrightarrow{\pi}\bb Z\xrightarrow{X_{\bullet}}\cal C}.
\]
The right-hand limit is equivalent to the limit of the right Kan extension
of $X_{\bullet}\circ\pi$ along the projection $\cal E\to\cal B$,
which, by Remark \ref{rem:dec_lim}, is the equalizer of $\id$ and
$p$.

Next, for (2), we consider the pullback squares
\[\begin{tikzcd}
	P & B & 0 \\
	A & {B\oplus B} & B
	\arrow[from=1-1, to=1-2]
	\arrow[from=1-1, to=2-1]
	\arrow[from=1-2, to=1-3]
	\arrow["\begin{array}{c} \left(\begin{smallmatrix}    1\\    1 \end{smallmatrix}\right) \end{array}", from=1-2, to=2-2]
	\arrow[from=1-3, to=2-3]
	\arrow["{(f,g)}"', from=2-1, to=2-2]
	\arrow["{(1\,-1)}"', from=2-2, to=2-3]
\end{tikzcd}\]The pullback $P$ is nothing but the equalizer of $f$ and $g$ \cite[\href{https://kerodon.net/tag/03HC}{Tag 03HC}]{kerodon},
and the pullback of the outer rectangle is the kernel of $f-g$. The
proof is now complete.
\end{proof}

\begin{lem}
\label{lem:conn_lim}Let $\cal C$ be a stable $\infty$-category
with a t-structure. Suppose that $\cal C$ has small limits and that
$0$-connected objects are stable under small products. Suppose we
are given a pushout diagram
\[\begin{tikzcd}
	{S^{d-1}\times I} & X \\
	{D^{d}\times I} & Y
	\arrow[from=1-1, to=1-2]
	\arrow[from=1-1, to=2-1]
	\arrow[from=1-2, to=2-2]
	\arrow[from=2-1, to=2-2]
\end{tikzcd}\]in $\cal S$, where $I$ is a set and $d\ge0$. For every diagram
$\phi:Y\to\cal C$ carrying each object to a $0$-connected object,
the map
\[
\theta:\lim\phi\to\lim\pr{\phi\vert X}
\]
is $\pr{-d+1}$-connected.
\end{lem}

\begin{proof}
By Remark \ref{rem:dec_lim}, the map $\theta$ is a pullback of the
map
\[
\prod_{i\in I}\lim\pr{\phi\vert\pr{D^{d}\times\{i\}}}\to\prod_{i\in I}\lim\pr{\phi\vert\pr{S^{d-1}\times\{i\}}}.
\]
Therefore, it suffices to show that the limit $\lim\pr{\phi\vert\pr{S^{d-1}\times\{i\}}}$
is $\pr{-d+1}$-connected for each $i\in I$. When $d=0$, the limit
is a zero object, so there is nothing to prove. If $d\geq1$, let
$C\in\cal C$ denote an image of a vertex of $S^{d-1}$. Since $D^{d}$
is contractible, the diagram $\phi\vert S^{d-1}\times\{i\}$ is equivalent
to the constant diagram at $C$. Thus
\[
\lim\phi\vert\pr{S^{d-1}\times\{i\}}\simeq C^{S^{d-1}}\simeq C^{\Sigma^{d-1}S^{0}}\simeq\Omega^{d-1}\pr{C^{S^{0}}}\simeq\Omega^{d-1}\pr{C\times C}.
\]
The right-hand side is $\pr{-d+1}$-connected, so we are done. 
\end{proof}

\subsection{\label{subsec:zero_ext}Extension by Zero Objects}

Consider the following problem: Let $\cal C$ be an $\infty$-category
and let $\cal C_{0},\cal C_{1}\subset\cal C$ be full subcategories
such that every object of $\cal C$ belongs to exactly one of $\cal C_{0}$
or $\cal C_{1}$. Let $\cal D$ be a pointed $\infty$-category and
suppose we are given a functor $f:\cal C\to\cal D$ carrying each
object of $\cal C_{0}$ to a zero object. One would expect that $f$
is equivalent to a smaller piece of data. In other words, we would
expect that there is a categorical equivalence
\[
\Fun'\pr{\cal C,\cal D}\simeq\{\text{something smaller}\}
\]
where $\Fun'\pr{\cal C,\cal D}\subset\Fun\pr{\cal C,\cal D}$ denotes
the full subcategory spanned by the objects carrying each object in
$\cal C_{0}$ to a zero object. We want to figure out what the smaller
thing is. 

A na\"ive guess of the ``smaller thing'' is $\Fun\pr{\cal C_{1},\cal D}$,
but this is too na\"ive. Indeed, there could be a morphism in $\cal C_{1}$,
say $\alpha$, which factors through a morphism of $\cal C_{0}$,
and there is no reason why an arbitrary functor $\cal C_{1}\to\cal D$
carries $\alpha$ to a zero map. So the restriction $\Fun'\pr{\cal C,\cal D}\to\Fun\pr{\cal C_{1},\cal D}$
is, in general, not essentially surjective. 

The next guess is that the essential image of the restriction functor
$\Fun'\pr{\cal C,\cal D}\to\Fun\pr{\cal C_{1},\cal D}$ will suffice.
If $\cal C$ and $\cal D$ are nerves of ordinary categories, this
is true. But for arbitrary $\infty$-categories, this is false; the
problem is that there could be a nontrivial homotopy between zero
maps, as one can see from the particular case where $\cal C=\Delta^{2}$
and $\cal C_{1}=\{1\}$. 

In this subsection, we will give one answer to this question in the
case where $\cal C_{1}$ is ``isolated'' in $\cal C$.

To state our main result, we need a bit of terminology.
\begin{defn}
Let $\cal C$ be an $\infty$-category, and let $\cal C'\subset\cal C$
be a full subcategory. A morphism $X\to Y$ of $\cal C'$ is said
to be \textbf{isolated in $\cal C$} if it does not factor through
any object of $\cal C$ which lies outside of $\cal C'$. We say that
$\cal C'$ is \textbf{isolated in $\cal C$ }if the morphisms of $\cal C'$
that are isolated in $\cal C$ are closed under compositions. 
\end{defn}

Here is the main result of this subsection.
\begin{prop}
\label{prop:zero_extension}Let $\cal C$ be an $\infty$-category,
and let $\cal C_{0},\cal C_{1}\subset\cal C$ be full subcategories
such that every object of $\cal C$ belongs to exactly one of $\cal C_{0}$
or $\cal C_{1}$. Let $\cal D$ be a pointed $\infty$-category, and
let $\cal Z\subset\cal D$ denote the full subcategory spanned by
the zero objects. Suppose that $\cal C_{1}$ is isolated in $\cal C$,
and let $\cal C^{{\rm isltd}}_{1}\subset\cal C_{1}$ denote the subcategory
spanned by the morphisms isolated in $\cal C$. The functor

\[
\theta:\Fun\pr{\cal C,\cal D}\times_{\Fun\pr{\cal C_{0},\cal D}}\Fun\pr{\cal C_{0},\cal Z}\to\Fun\pr{\cal C^{{\rm isltd}}_{1},\cal D}
\]
is a trivial fibration.
\end{prop}

\begin{proof}
Let us say that a morphism $f$ of $\cal C$ is \textbf{basic} if
either $f$ is a morphism of $\cal C^{{\rm isltd}}_{1}$ or $f$ is
not a morphism of $\cal C^{1}$. We will say that a simplex $\sigma:\Delta^{d}\to\cal C$
is \textbf{basic }if for each $0\leq i<d$, the restriction $\sigma\vert\Delta^{\{i,i+1\}}$
is basic. We let $\{\sigma_{\alpha}:\Delta^{d_{\alpha}}\to\cal C\}_{\alpha\in A}$
denote the set of all basic simplices of $\cal C$. Note that the
map $\coprod_{\alpha\in A}\Delta^{d_{\alpha}}\to\cal C$ is an epimorphism
of simplicial sets because every simplex of $\cal C$ is a face of
a basic simplex.

For each $\alpha\in A$, set $P_{\alpha}=\Delta^{d_{\alpha}}\times_{\cal C}\cal C_{0}$,
$Q_{\alpha}=\Delta^{d_{\alpha}}\times_{\cal C}\cal C_{1}$, and $Q'_{\alpha}=\Delta^{d_{\alpha}}\times_{\cal C}\cal C^{{\rm isltd}}_{1}$.
Since $\theta$ is a pullback of the map
\[
\Fun\pr{\coprod_{\alpha\in A}\Delta^{d_{\alpha}},\cal D}\times_{\Fun\pr{\coprod_{\alpha\in A}P_{\alpha},\cal D}}\Fun\pr{\coprod_{\alpha\in A}P_{\alpha},\cal Z}\to\Fun\pr{\coprod_{\alpha\in A}Q_{\alpha}',\cal D},
\]
it suffices to show that for each $\alpha\in A$, the map
\[
\theta_{\alpha}:\Fun\pr{\Delta^{d_{\alpha}},\cal D}\times_{\Fun\pr{P_{\alpha},\cal D}}\Fun\pr{P_{\alpha},\cal Z}\to\Fun\pr{Q'_{\alpha},\cal D}
\]
is a trivial fibration. Now since $\sigma_{\alpha}$ is isolated,
the subcategory $Q'_{\alpha}\subset Q_{\alpha}$ is spanned by the
morphisms of $Q_{\alpha}$ that are isolated in $\Delta^{d_{\alpha}}$.
Therefore, we are reduced to the case where $\cal C=\Delta^{d}$ for
some $d\geq0$. In this case, we will prove the assertion by induction
on $d$.

If $d=0$, the claim is obvious. For the inductive step, suppose we
have proved the assertion up to $d-1$. If $\cal C_{0}$ is empty,
the claim is trivial, so assume that $\cal C_{0}$ is nonempty. Let
$m$ be the maximal integer which belongs to $\cal C_{0}$. There
are two cases to consider.

Suppose first that $m=d$. In this case, \cite[Proposition 4.3.2.15]{HTT}
shows that the functor
\[
\Fun\pr{\cal C,\cal D}\times_{\Fun\pr{\cal C_{0},\cal D}}\Fun\pr{\cal C_{0},\cal Z}\to\Fun\pr{\Delta^{d-1},\cal D}\times_{\Fun\pr{\cal C_{0}\cap\Delta^{d-1},\cal D}}\Fun\pr{\cal C_{0}\cap\Delta^{d-1},\cal Z}
\]
is a trivial fibration. By the induction hypothesis, the functor
\[
\Fun\pr{\cal C\cap\Delta^{d-1},\cal D}\times_{\Fun\pr{\cal C_{0}\cap\Delta^{d-1},\cal D}}\Fun\pr{\cal C_{0}\cap\Delta^{d-1},\cal Z}\to\Fun\pr{\cal C^{{\rm isltd}}_{1},\cal D}
\]
is a trivial fibration. It follows that $\theta$ is a composition
of trivial fibrations and hence is itself a trivial fibration.

Suppose next that $m<d$. Set $X=\Delta^{\{0,\dots,m\}}\amalg_{\{m\}}\Delta^{\{m\dots,d\}}$.
The map $\theta$ factors as 
\begin{align*}
\Fun\pr{\cal C,\cal D}\times_{\Fun\pr{\cal C_{0},\cal D}}\Fun\pr{\cal C,\cal Z} & \xrightarrow{f}\Fun\pr{X,\cal D}\times_{\Fun\pr{\cal C_{0},\cal D}}\Fun\pr{\cal C_{0},\cal Z}\\
 & \xrightarrow{g}\Fun\pr{\cal C^{{\rm isltd}}_{1},\cal D}.
\end{align*}
The map $f$ is a trivial fibration because the inclusion $X\to\cal C$
is a weak categorical equivalence \cite[Lemma 5.4.5.10]{HTT}. It
will therefore suffice to show that $g$ is a trivial fibration. Since
$g$ is a categorical fibration, it suffices to show that it is a
categorical equivalence. We may identify $g$ with the map between
the pullbacks of the rows of the following commutative diagram: 
\[\begin{tikzcd}[column sep=tiny, scale cd=.75]
	{\operatorname{Fun}(\Delta^m,\mathcal{D})\times _{\operatorname{Fun}(\mathcal{C}_0\cap\Delta^m,\mathcal{D})}\operatorname{Fun}(\mathcal{C}_0\cap\Delta^m,\mathcal{Z})} & {\operatorname{Fun}(\{m\},\mathcal{Z})} & {\operatorname{Fun}(\Delta^{\{m,\dots,d\}},\mathcal{D})\times _{\operatorname{Fun}(\{m\},\mathcal{D})}\operatorname{Fun}(\{m\},\mathcal{Z})} \\
	{\operatorname{Fun}(\Delta^m\cap\mathcal{C}_1^{\mathrm{isltd}},\mathcal{D})} & {\Delta^0} & {\operatorname{Fun}(\Delta^{\{m,\dots,d\}}\cap\mathcal{C}_{1}^{\mathrm{isltd}},\mathcal{D}).}
	\arrow[from=1-1, to=1-2]
	\arrow[from=1-3, to=1-2]
	\arrow[from=1-2, to=2-2]
	\arrow[from=1-1, to=2-1]
	\arrow[from=1-3, to=2-3]
	\arrow[from=2-1, to=2-2]
	\arrow[from=2-3, to=2-2]
\end{tikzcd}\]By the induction hypothesis, the vertical arrows of the above diagrams
are trivial fibrations. Moreover, the horizontal arrows are categorical
fibrations of $\infty$-categories. Hence $g$ is also a categorical
equivalence, and the proof is complete. 
\end{proof}

\section{Miscellany}

In this section, we briefly recall some key definitions and results
that did not quite fit into the main body of the paper, but are nonetheless
used in it.

\subsection{\label{subsec:germ}Germs}

In this subsection, we will establish a few basic results on \textit{germs}
on manifolds. Intuitively, a germ of an $n$-manifold $M$ is an infinitesimal
embedding of $\bb R^{n}$ into $M$. We will see that the collection
of germs can be organized into a principal bundle\footnote{Principal fibrations of simplicial sets are an analog of principal
bundles of topological spaces. See \cite[ChapterV, $\S$2]{GoerssJardine}
for an overview.} over $M$ (or more precisely, a principal fibration over $\Sing M$),
which roughly plays the role of the frame bundle for smooth manifolds.
\begin{defn}
\cite[Definition 5.4.1.6]{HA} Let $n\geq0$. For each positive number
$r>0$, let $B^{n}\pr r\subset\bb R^{n}$ denote the open ball of
radius $r$ centered at the origin. Given a finite set $S$ and an
$n$-manifold $M$, we define the simplicial set $\Germ\pr{S,M}$
of \textbf{$S$-germs} of $M$ as the colimit 
\[
\Germ\pr{S,M}=\colim_{r\in\pr{\bb R_{>0}}^{\op}}\Sing\Emb\pr{S\times B^{n}\pr r,M}.
\]
If $S$ is a singleton, we simply write $\Germ\pr{S,M}=\Germ\pr M$.
The evaluation at the origin determines a map $\Germ\pr{S,M}\to\Sing\Conf\pr{S,M}$.
For each $p\in M$, we set $\Germ_{p}\pr M=\Germ\pr M\times_{\Sing M}\{p\}$.
\end{defn}

\begin{rem}
Let $n\geq0$. The simplicial set $\Germ_{0}\pr{\bb R^{n}}$ has the
structure of a simplicial monoid. To see this, note that its $k$-simplex
is represented by a map $\sigma:\abs{\Delta^{k}}\times B^{n}\pr r\to\bb R^{n}$,
where $r>0$ is a positive number and for each point $x\in\abs{\Delta^{k}}$,
the map $\sigma\pr{x,-}:B^{n}\pr r\to\bb R^{n}$ is an embedding which
fixes the origin. Given another such map $\tau:\abs{\Delta^{k}}\times B^{n}\pr{r'}\to\bb R^{n}$,
the product $[\tau]\circ[\sigma]$ is represented by the map
\[
\abs{\Delta^{k}}\times B^{n}\pr{r''}\xrightarrow{\pr{\id,\sigma}}\abs{\Delta^{k}}\times B^{n}\pr{r'}\xrightarrow{\tau}\bb R^{n},
\]
where $0<r''<r$ is chosen so that $\sigma\vert\Delta^{k}\times B^{n}\pr{r''}$
takes values in $B^{n}\pr{r'}$. (Such a number $r''$ exists because
$\abs{\Delta^{k}}$ is compact.)

More generally, if $M$ is an $n$-manifold and $S$ is a finite set,
then $\Germ_{0}\pr{\bb R^{n}}^{S}$ acts on $\Germ\pr{S,M}$ (from
the right) as follows: A $k$-simplex of $\Germ\pr{S,M}\times\Germ_{0}\pr{\bb R^{n}}^{S}$
is represented by maps $\tau:\abs{\Delta^{k}}\times\coprod_{s\in S}B^{n}\pr r\to M$
and $\pr{\sigma_{s}:\abs{\Delta^{k}}\times B^{n}\pr{r'}\to\bb R^{n}}_{s\in S}$
where $r,r'>0$. Its image in $\Germ\pr{S,M}$ is represented by the
composite 
\[
\abs{\Delta^{k}}\times\coprod_{s\in S}B^{n}\pr{r''}\xrightarrow{\pr{\id,\coprod_{s}\sigma_{s}}}\abs{\Delta^{k}}\times\coprod_{s\in S}B^{n}\pr{r'}\xrightarrow{\tau}M,
\]
where $r''>0$ is chosen so that the above composite is defined. 
\end{rem}

\begin{rem}
\label{rem:5.4.1.8}Let $n\geq0$, let $M$ be an $n$-manifold, and
let $S$ be a finite set. As a filtered colimit of Kan complexes,
the simplicial set $\Germ\pr{S,M}$ is a Kan complex. Moreover, the
map
\[
\Sing\Emb\pr{\bb R^{n}\times S,M}\to\Germ\pr{S,M}
\]
is a homotopy equivalence of Kan complexes \cite[Proposition 5.4.1.8]{HA}.
\end{rem}

The importance of the germ construction lies in the following propositions:

\begin{prop}
\label{prop:germ_group}Let $n\geq0$. The simplicial monoid $\Germ_{0}\pr{\bb R^{n}}$
is a simplicial group.
\end{prop}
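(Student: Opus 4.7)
The plan is to show directly that every simplex of $\Germ_0\pr{\bb R^n}$ admits a two-sided inverse under the monoid multiplication described in the preceding remark; since two-sided inverses in a monoid are unique and therefore automatically commute with all face and degeneracy operators, this is enough to promote the simplicial monoid $\Germ_0\pr{\bb R^n}$ to a simplicial group. Fix $k\ge0$ and a representative $\sigma:\abs{\Delta^k}\times B^n\pr r\to\bb R^n$ of a $k$-simplex, so that each slice $\sigma\pr{x,-}$ is an embedding fixing the origin. The goal is to produce a continuous map $\tau:\abs{\Delta^k}\times B^n\pr\delta\to\bb R^n$ of the same shape such that $\sigma\pr{x,\tau\pr{x,y}}=y$ and $\tau\pr{x,\sigma\pr{x,z}}=z$ on suitable subballs; by the formula recalled in the previous remark, the class of $\tau$ will then be a two-sided inverse of $[\sigma]$.

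The heart of the argument is a parametrized version of invariance of domain. By the classical invariance of domain, each $\sigma\pr{x,-}:B^n\pr r\to\bb R^n$ is an open embedding, so in particular $\sigma\pr{x,y}\neq0$ for $y\neq0$. Pick any $r_0\in\pr{0,r}$; then the function $\pr{x,y}\mapsto\abs{\sigma\pr{x,y}}$ is continuous and strictly positive on the compact set $\abs{\Delta^k}\times\partial B^n\pr{r_0}$, and hence attains some minimum $\delta>0$. A Jordan--Brouwer separation argument applied slicewise to the embedded sphere $\sigma\pr{x,\partial B^n\pr{r_0}}$ then shows that $B^n\pr\delta\subset\sigma\pr{x,B^n\pr{r_0}}$ for every $x\in\abs{\Delta^k}$. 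I can therefore define $\tau\pr{x,y}=\sigma\pr{x,-}^{-1}\pr y\in B^n\pr{r_0}$ on $\abs{\Delta^k}\times B^n\pr\delta$; continuity of $\tau$ follows from a routine sequential compactness argument, since any subsequential limit of $\tau\pr{x_n,y_n}$ lies in $\overline{B^n\pr{r_0}}$ and is mapped to $y_\infty$ by $\sigma\pr{x_\infty,-}$, hence must coincide with $\tau\pr{x_\infty,y_\infty}$ by injectivity.

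With $\tau$ in hand, the remaining task is verification. Unwinding the formula for the product in the preceding remark, $[\sigma]\cdot[\tau]$ and $[\tau]\cdot[\sigma]$ are represented on appropriate subballs by $\pr{x,y}\mapsto\sigma\pr{x,\tau\pr{x,y}}=y$ and $\pr{x,z}\mapsto\tau\pr{x,\sigma\pr{x,z}}=z$ respectively, both of which agree with the identity germ. The main obstacle I anticipate is the uniform lower bound $\delta>0$, as opposed to a merely pointwise one; once this parametrized openness of $\sigma$ is secured, the construction of $\tau$ and the verification of the inverse identities are essentially bookkeeping.
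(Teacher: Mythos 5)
Your argument is correct and is essentially the paper's proof: both extract a uniform radius $\delta>0$ with $B^{n}\pr{\delta}\subset\sigma_{x}\pr{B^{n}\pr{r_{0}}}$ for every $x$ from compactness of $\abs{\Delta^{k}}$ together with a separation/degree-type argument in $\bb R^{n}$, and then invert $\sigma$ slicewise on that ball, the only cosmetic difference being that the paper packages the uniform ball-inclusion step into Lemma \ref{lem:cube_in_cube} (which it reuses in the proof of Proposition \ref{prop:germ_bundle}) and gets continuity of $\tau$ via the compact-Hausdorff homeomorphism criterion rather than by sequential compactness. One small omission: your Jordan--Brouwer step requires $n\geq1$, so the trivial case $n=0$ should be disposed of separately at the outset, as the paper does.
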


\begin{prop}
\label{prop:germ_bundle}Let $n\geq0$, let $M$ be an $n$-manifold,
and let $S$ be a finite set. The map
\[
\Germ\pr{S,M}\to\Sing\Conf\pr{S,M}
\]
is a principal $\Germ_{0}\pr{\bb R^{n}}^{S}$-fibration.
\end{prop}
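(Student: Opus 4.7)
The plan is to construct an explicit inverse to the shear map
\[
\mu:\Germ\pr{S,M}\times\Germ_{0}\pr{\bb R^{n}}^{S}\to\Germ\pr{S,M}\times_{\Sing\Conf\pr{S,M}}\Germ\pr{S,M},\quad\pr{\tau,g}\mapsto\pr{\tau,\tau\circ g},
\]
thereby exhibiting the $\Germ_{0}\pr{\bb R^{n}}^{S}$-action as free with orbit space $\Sing\Conf\pr{S,M}$. Combined with a direct lifting argument, this is exactly the data of a principal fibration.

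First I would unwind the definitions of both sides. A $k$-simplex of the codomain is represented by a pair of maps $\tau_{0},\tau_{1}:\abs{\Delta^{k}}\times\coprod_{s\in S}B^{n}\pr r\to M$ such that each slice $\tau_{i}\pr{x,-}$ is an embedding and $\tau_{0}\pr{x,0_{s}}=\tau_{1}\pr{x,0_{s}}$ for every $x\in\abs{\Delta^{k}}$ and $s\in S$. I would then set $g=\tau_{0}^{-1}\circ\tau_{1}$ near the origin. The key geometric input is invariance of domain: each slice $\tau_{0}\pr{x,-}$ is an open embedding whose set-theoretic inverse is continuous. Using continuity of $\tau_{0}$ in $x$ and the compactness of $\abs{\Delta^{k}}$, a tube-lemma argument would produce $0<r''<r$ such that each image $\tau_{1}\pr{\abs{\Delta^{k}}\times\{s\}\times B^{n}\pr{r''}}$ lies inside $\tau_{0}\pr{\abs{\Delta^{k}}\times\{s\}\times B^{n}\pr r}$ with a continuous family of local inverses defined there. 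The composite $g_{s}\pr{x,-}=\tau_{0}\pr{x,-}^{-1}\circ\tau_{1}\pr{x,-}$ then defines a $k$-simplex of $\Germ_{0}\pr{\bb R^{n}}^{S}$, and one verifies that this construction is independent of the chosen representatives and compatible with faces and degeneracies, yielding a simplicial map $\pr{\tau_{0},\tau_{1}}\mapsto\pr{\tau_{0},g}$.

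A direct unwinding shows that this map is a two-sided inverse to $\mu$. Freeness of the action and the identification of $\Sing\Conf\pr{S,M}$ with the quotient $\Germ\pr{S,M}/\Germ_{0}\pr{\bb R^{n}}^{S}$ then follow formally from the bijectivity of the shear map. It remains to verify that the projection $\Germ\pr{S,M}\to\Sing\Conf\pr{S,M}$ is itself surjective (so that every simplex of the base has a preimage): given a continuous map $c:\abs{\Delta^{k}}\to\Conf\pr{S,M}$, one chooses coordinate charts around each $c\pr{x,s}$, which can be done with a uniform radius by compactness of $\abs{\Delta^{k}}$, producing a lift. Together, these facts constitute the defining data of a principal $\Germ_{0}\pr{\bb R^{n}}^{S}$-fibration in the sense of Goerss--Jardine.

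The main obstacle is the tube-lemma step: constructing the inverse $g=\tau_{0}^{-1}\circ\tau_{1}$ uniformly in $x\in\abs{\Delta^{k}}$. Slicewise one can invert $\tau_{0}\pr{x,-}$ on some small ball depending on $x$, but one must exploit the continuity of $\tau_{0}$ as a family, together with the compactness of $\abs{\Delta^{k}}$, to arrange a single $r''>0$ that works simultaneously for every $x$. Once this analytic step is carried out, the remaining verifications—well-definedness at the level of germs, compatibility with the simplicial structure, and the configuration-lifting argument—are essentially formal.
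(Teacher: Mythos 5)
Your overall strategy coincides with the paper's: prove freeness and the identification of the quotient with $\Sing\Conf(S,M)$ by constructing the transition germ $g=\tau_{0}^{-1}\circ\tau_{1}$ for two lifts of the same configuration, plus a separate surjectivity check. But the surjectivity step has a genuine gap. Choosing coordinate charts around each $c(x,s)$ with a uniform radius by compactness is not enough: to produce a $k$-simplex of $\Germ(S,M)$ you must produce a map $\abs{\Delta^{k}}\times\coprod_{s}B^{n}(r)\to M$ that is jointly continuous, i.e.\ a family of charts varying \emph{continuously} in $x$. The embedding space $\Emb(\bb R^{n}\times S,M)$ is nowhere near convex, so local choices of charts over pieces of $\abs{\Delta^{k}}$ cannot simply be averaged or glued. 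This is exactly why the paper first establishes that $\Emb(\bb R^{n}\times S,M)\to\Conf(S,M)$ is a Serre fibration (Proposition \ref{prop:ev_serre}, whose proof relies on the nontrivial ``dragging'' lemma producing a continuous map $K\times K\to\Homeo(M)$), deduces that $\Germ(S,M)\to\Sing\Conf(S,M)$ is a Kan fibration (Corollary \ref{cor:germ_fib}), and then reads off surjectivity on simplices by lifting along $\{0\}\hookrightarrow\Delta^{k}$. Your argument silently assumes this continuity and so doesn't actually produce a lift.

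The injectivity (shear-inverse) step is essentially sound but glosses a real subtlety: because the target region $\tau_{0}(x,B^{n}(r))$ moves with $x$, the ``tube-lemma argument'' is not a direct application of the standard tube lemma — you must first show that the containment condition is open in $x$. The paper handles this with Lemma \ref{lem:cube_in_cube}, which uses a degree/null-homotopy argument and works for an arbitrary locally path-connected parameter space; in your situation one could alternatively apply invariance of domain to the \emph{parameterized} map $(x,p)\mapsto(x,\tau_{0}(x,p))$ since $\abs{\Delta^{k}}$ is a manifold with corners, but either way this deserves a proof rather than an appeal to the tube lemma. The paper also gets continuity of the local inverse more economically by restricting to $\abs{\Delta^{k}}\times\overline{B^{n}(r/2)}$ and using that a continuous injection from a compact space to a Hausdorff space is a homeomorphism onto its image. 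Finally, a small structural observation: the paper reduces at the outset to $S$ a singleton (since $\Germ(S,M)\to\Sing\Conf(S,M)$ is a pullback of $\Germ(M)^{S}\to\Sing M^{S}$), which streamlines the notation; you could adopt the same reduction.
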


The proofs of Propositions \ref{prop:germ_group} and \ref{prop:germ_bundle}
require a few preliminaries.
\begin{lem}
\label{lem:cube_in_cube}Let $n\geq1$, let $X$ be a locally path-connected
topological space, let $M$ be an $n$-manifold, and let
\[
f,g:X\times D^{n}\to M
\]
be continuous maps. Suppose that, for each $x\in X$, the map $f_{x}=f\pr{x,-}:D^{n}\to M$
is injective. If there is a point $x_{0}\in X$ such that $g_{x_{0}}\pr{D^{n}}\subset f_{x_{0}}\pr{\opn{Int}D^{n}}$,
then there is a neighborhood $U$ of $x_{0}$ such that $g_{x}\pr{D^{n}}\subset f_{x}\pr{\opn{Int}D^{n}}$
for all $x\in U$.
\end{lem}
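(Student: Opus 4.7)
This statement is a parameterized, topological version of the fact that the image of the interior of an embedded $n$-disk is open in $M$ — essentially Brouwer's invariance of domain combined with continuity of families of topological embeddings.

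The plan has three parts. First, use Brouwer's invariance of domain to deduce that $V:=f_{x_{0}}\pr{\opn{Int}D^{n}}$ is an open subset of $M$ containing the compact set $K:=g_{x_{0}}\pr{D^{n}}$. Then the compact preimage $L:=f_{x_{0}}^{-1}\pr{K}\subset\opn{Int}D^{n}$ can be covered by finitely many closed balls $B_{1},\dots,B_{r}\subset\opn{Int}D^{n}$ with $L\subset\bigcup_{i}\opn{Int}B_{i}$, which reduces the problem to a local statement around each $B_{i}$.

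The core of the argument is the following \emph{local parametric invariance of domain}: for each $m_{0}\in f_{x_{0}}\pr{\opn{Int}B_{i}}$, there exist a neighborhood $U$ of $x_{0}$ and an open neighborhood $W$ of $m_{0}$ in $M$ such that $W\subset f_{x}\pr{\opn{Int}B_{i}}$ for all $x\in U$. To prove this, I choose a smaller closed ball $B'\subset\opn{Int}B_{i}$ around $f_{x_{0}}^{-1}\pr{m_{0}}$ and a closed neighborhood $\overline{W}$ of $m_{0}$ inside $f_{x_{0}}\pr{\opn{Int}B'}\setminus f_{x_{0}}\pr{\partial B'}$. By $C^{0}$-continuity of $f$ on the compact set $\partial B'$, shrink $U$ so that $f_{x}\pr{\partial B'}\cap\overline{W}=\emptyset$ for all $x\in U$, and use local path-connectedness of $X$ to assume $U$ is path-connected. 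For $x\in U$ and $m\in W$, a path $\gamma\colon[0,1]\to U$ from $x_{0}$ to $x$ gives a continuous homotopy of pairs $t\mapsto f_{\gamma(t)}|_{B'}\colon\pr{B',\partial B'}\to\pr{M,M\setminus\br{m}}$ (the boundary avoids $m$ throughout by choice of $U$). Hence the induced maps agree: $f_{x,*}=f_{x_{0},*}\colon H_{n}\pr{B',\partial B'}\to H_{n}\pr{M,M\setminus\br{m}}$. Since $f_{x_{0}}|_{B'}$ is a closed embedding with $m\in f_{x_{0}}\pr{\opn{Int}B'}$, the right-hand side sends a generator to $\pm1$, the local degree of an embedding. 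A nonzero local degree forces $m\in f_{x}\pr{B'}$ (otherwise $f_{x}|_{B'}$ would factor through $M\setminus\br{m}$ and the induced map would vanish); combined with $m\notin f_{x}\pr{\partial B'}$, this yields $m\in f_{x}\pr{\opn{Int}B'}\subset f_{x}\pr{\opn{Int}B_{i}}$.

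Finally, by compactness of $K$, finitely many pairs $\pr{U_{j},W_{j}}$ from the local claim combine to give a neighborhood $U^{*}=\bigcap_{j}U_{j}$ of $x_{0}$ and an open neighborhood $W^{*}=\bigcup_{j}W_{j}$ of $K$ with $W^{*}\subset f_{x}\pr{\opn{Int}D^{n}}$ for all $x\in U^{*}$. Using $C^{0}$-continuity of $g$ on the compact $D^{n}$, I find a (possibly smaller) neighborhood of $x_{0}$ on which $g_{x}\pr{D^{n}}\subset W^{*}$, completing the proof. The hard part is the local-degree step, and it is here that local path-connectedness of $X$ is essential: it supplies the path $\gamma$ needed to realize the homotopy of pairs, without which one would have to invoke ANR technology (since $M\setminus\br{m}$ is an ANR) to homotope $f_{x}|_{\partial B'}$ to $f_{x_{0}}|_{\partial B'}$ among $C^{0}$-close maps.
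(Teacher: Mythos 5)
Your proposal is correct, and it reaches the same conclusion by a route that is close in spirit to the paper's but different in its mechanics. Both proofs hinge on the same essential device: a path in $X$ from $x_{0}$ to a nearby point $x$ (supplied by local path-connectedness) produces a homotopy of the restricted embedding along which the image of a sphere stays away from a chosen target point, and then an algebraic-topological obstruction shows that the target point cannot escape the image of the ball. Where you differ is in how that obstruction is packaged. The paper works with a \emph{single} ball $\overline{B^{n}(r)}\subset\operatorname{Int}D^{n}$ chosen to contain $f_{x_{0}}^{-1}(g_{x_{0}}(D^{n}))$, picks a metric on $V=f_{x_{0}}(\operatorname{Int}D^{n})$ and the right $\varepsilon$, and then derives a contradiction: if some $q=g_{x_{1}}(p)$ escaped $f_{x_{1}}(B^{n}(r))$, the map $f_{x_{0}}\colon\partial\overline{B^{n}(r)}\to V\setminus\{q\}$ would be simultaneously a homotopy equivalence and nullhomotopic, impossible because $S^{n-1}$ is not contractible. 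You instead cover the compact set $K=g_{x_{0}}(D^{n})$ by finitely many small regions and prove a direct local statement at each point $m_{0}$ via the local degree $H_{n}(B',\partial B')\to H_{n}(M,M\setminus\{m\})$, using that the degree of an embedding is $\pm1$ and is unchanged along the homotopy; you then patch by compactness. Your cover of $L=f_{x_{0}}^{-1}(K)$ by the closed balls $B_{1},\dots,B_{r}$ is a harmless redundancy — for each $m_{0}\in K$ you can choose the small ball $B'$ inside $\operatorname{Int}D^{n}$ directly and then invoke compactness of $K$ alone. On balance the paper's one-ball contradiction is a bit leaner, while your local-degree formulation is a more standard "parametric invariance of domain" argument that some readers will find more transparent; the content is equivalent.
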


\begin{proof}
Since $f_{x_{0}}\vert\opn{Int}D^{n}$ is an injective continuous map
between $n$-manifolds, it is open. In particular, the set $V=f_{x_{0}}\pr{\opn{Int}D^{n}}$
is open. Thus, replacing $X$ by a neighborhood of $x_{0}$ if necessary,
we may assume that $g$ takes values in $f_{x_{0}}\pr{\opn{Int}D^{n}}$.
The subset $f^{-1}_{x_{0}}\pr{g_{x_{0}}\pr{D^{n}}}\subset D^{n}$
is compact and lies in the interior of $D^{n}$, so we can find some
number $0<r<1$ such that $f^{-1}_{x_{0}}\pr{g_{x_{0}}\pr{D^{n}}}\subset B^{n}\pr r$.
Replacing $X$ by a neighborhood of $x_{0}$ if necessary, we may
assume that $f\pr{X\times\overline{B^{n}\pr r}}\subset V$.

Choose a metric on $V$ and set $\varepsilon=\opn{dist}\pr{V\setminus f_{x_{0}}\pr{B^{n}\pr r},g{}_{x_{0}}\pr{D^{n}}}$.
Since $D^{n}$ and $\partial\overline{B^{n}\pr r}$ are compact, and
since $X$ is locally path-connected, there is a path-connected neighborhood
$U$ of $x_{0}$ such that
\begin{align*}
\sup_{x\in U}\opn{dist}\pr{f_{x}\pr{\partial\overline{B^{n}\pr r}},f_{x_{0}}\pr{\partial\overline{B^{n}\pr r}}} & <\varepsilon/2,\\
\sup_{x\in U}\opn{dist}\pr{g_{x}\pr{D^{n}},g_{x_{0}}\pr{D^{n}}} & <\varepsilon/2.
\end{align*}
We claim that $U$ has the desired properties. In fact, we will show
that $g_{x}\pr{D^{n}}\subset f_{x}\pr{B^{n}\pr r}$ for all $x\in U$.

Let $x_{1}\in U$. We will derive a contradiction by assuming that
there is a point $p\in D^{n}$ such that $q=g_{x_{1}}\pr p\not\in f_{x_{1}}\pr{B^{n}\pr r}$.
By construction, for each $x\in U$, the point $q$ does not belong
to $f_{x}\pr{\partial\overline{B^{n}\pr r}}$. Since $U$ is path-connected,
the maps $f_{x_{0}}:\partial\overline{B^{n}\pr r}\to V\setminus\{q\}$
and $f_{x_{1}}:\partial\overline{B^{n}\pr r}\to V\setminus\{q\}$
are homotopic. The latter map is null homotopic because it extends
to all of $\overline{B^{n}\pr r}$. Hence the map $f_{x_{0}}:\partial\overline{B^{n}\pr r}\to V\setminus\{q\}$
is also nullhomotopic. On the other hand, the distance from $q$ to
$g_{x_{0}}\pr{D^{n}}$ is less than $\varepsilon/2$, so $q$ belongs
to $f_{x_{0}}\pr{B_{n}\pr r}$. It follows that the map $f_{x_{0}}:\partial\overline{B^{n}\pr r}\to V\setminus\{q\}$
is a homotopy equivalence. Thus $\partial\overline{B^{n}\pr r}$ is
contractible, which is a contradiction.
\end{proof}

\begin{prop}
\label{prop:ev_serre}Let $n\geq0$. For any $n$-manifold $M$ and
any finite set $S$, the evaluation at the origin determines a Serre
fibration
\[
\Emb\pr{\bb R^{n}\times S,M}\to\Conf\pr{S,M}.
\]
\end{prop}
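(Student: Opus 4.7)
The plan is to verify the Serre homotopy lifting property directly, by realizing any motion of configurations as the trace of a family of ambient homeomorphisms of $M$. Given a lifting square
\[
\begin{tikzcd}
D^{k}\times\{0\} \arrow[r, "f_{0}"] \arrow[d, hook] & \Emb\pr{\bb R^{n}\times S, M} \arrow[d, "\opn{ev}_{0}"] \\
D^{k}\times[0,1] \arrow[r, "H"'] & \Conf\pr{S, M}
\end{tikzcd}
\]
I would produce the desired lift $\tild H$ by first constructing a continuous assignment $\Phi:D^{k}\times[0,1]\to\Homeo_{c}\pr M$ with $\Phi\pr{z,0}=\id_{M}$ and $\Phi\pr{z,t}\pr{f_{0}\pr z\pr{0,s}}=H\pr{z,t}\pr s$ for all $s\in S$, and then defining $\tild H\pr{z,t}=\Phi\pr{z,t}\circ f_{0}\pr z$. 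Since each $\Phi\pr{z,t}$ is a self-homeomorphism of $M$, the composite is automatically an embedding, and its value at the origin agrees with $H\pr{z,t}$ by construction.

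To build $\Phi$, I would work locally. Fix a point $\pr{z_{0},t_{0}}\in D^{k}\times[0,1]$ and choose radii $0<r'<r$ for which $f_{0}\pr{z_{0}}$ sends each $\o{B^{n}\pr r}\times\{s\}$ injectively into $M$ with pairwise disjoint images. Applying Lemma \ref{lem:cube_in_cube} to $f_{0}$ and to the constant family at $H\pr{z_{0},t_{0}}\pr s$ yields a product neighborhood $U\times J$ of $\pr{z_{0},t_{0}}$ on which the moving points satisfy $H\pr{z,t}\pr s\in f_{0}\pr z\pr{B^{n}\pr{r'}\times\{s\}}$, and shrinking $U$ preserves pairwise disjointness of the $f_{0}\pr z\pr{\o{B^{n}\pr r}\times\{s\}}$ across $s\in S$. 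Pulling back through the chart $f_{0}\pr z\pr{-,s}$ produces continuous paths $\gamma^{s}:U\times J\to B^{n}\pr{r'}$. Fix a bump function $\beta:\bb R^{n}\to[0,1]$ equal to $1$ on $B^{n}\pr{r'}$ and vanishing outside $B^{n}\pr r$. On a sufficiently small subinterval $J'\subset J$, the truncated translation $x\mapsto x+\beta\pr x\pr{\gamma^{s}\pr{z,t}-\gamma^{s}\pr{z,t_{0}}}$ is a self-homeomorphism of $\bb R^{n}$ supported in $B^{n}\pr r$; transporting through the chart and extending by the identity on the complement gives a local family $\psi_{s}\pr{z,t}\in\Homeo_{c}\pr M$ supported in $f_{0}\pr z\pr{B^{n}\pr r\times\{s\}}$. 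Disjointness of supports makes $\prod_{s\in S}\psi_{s}\pr{z,t}$ well-defined and jointly continuous in $\pr{z,t}$.

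By compactness of $D^{k}\times[0,1]$, finitely many such patches cover it; after choosing a subdivision $0=\tau_{0}<\tau_{1}<\cdots<\tau_{N}=1$ fine enough that every slab $D^{k}\times[\tau_{i-1},\tau_{i}]$ is contained in a single patch in the time direction, I would define $\Phi$ inductively by concatenating the local ambient isotopies segment by segment, starting from $\Phi\pr{z,0}=\id_{M}$. The main obstacle is the bookkeeping needed to keep $\Phi$ jointly continuous in $\pr{z,t}$ while the charts $f_{0}\pr z$ themselves vary with $z$: the local constructions on different patches use different coordinate models on $M$, and one must verify that passing between them does not break continuity. Lemma \ref{lem:cube_in_cube} is exactly the tool that reconciles these competing continuities, since it guarantees that moving configurations remain inside the varying charts on a uniform product neighborhood, which is what allows the local-to-global assembly to go through.
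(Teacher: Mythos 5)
Your overall strategy --- realizing motions of configurations as traces of families of ambient homeomorphisms built from bump-function translations --- is the same as the paper's. But the assembly step has a genuine gap, and there are two places where the outline does not work as written.

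First, the claim that one can choose a subdivision $0=\tau_{0}<\cdots<\tau_{N}=1$ ``fine enough that every slab $D^{k}\times[\tau_{i-1},\tau_{i}]$ is contained in a single patch in the time direction'' is false: the patches $U\times J$ produced by Lemma \ref{lem:cube_in_cube} are local in $z$ as well as in $t$, so a Lebesgue number controls the time diameter but each slab still meets many distinct $U$-patches. You would therefore have to argue that the resulting family $\Phi$ is independent of which patch is consulted at each $(z,t)$ before the local-to-global assembly can proceed; the remark that Lemma \ref{lem:cube_in_cube} ``reconciles these competing continuities'' does not by itself supply this. Second, your application of Lemma \ref{lem:cube_in_cube} at an arbitrary base point $(z_{0},t_{0})$ requires $H(z_{0},t_{0})(s)\in f_{0}(z_{0})\bigl(\opn{Int}B^{n}(r)\times\{s\}\bigr)$, which is automatic only when $t_{0}=0$. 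For later slabs you must re-center the chart --- at the start of the $i$th slab the relevant embedding is $\Phi(z,\tau_{i-1})\circ f_{0}(z)$, not $f_{0}(z)$ --- and this re-centering has to be built explicitly into the induction.

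The paper sidesteps both issues. It first invokes the locality of Serre fibrations to reduce to the case where the evaluation map has been pulled back over a product $\prod_{s}U_{s}$ of pairwise disjoint coordinate charts, which eliminates the $z$-varying chart. It then proves claim (B): choosing a bump function $\phi$ with uniformly small derivative yields a single continuous family $\chi:K\times K\to\Homeo(M)$ of compactly supported homeomorphisms with $\chi(x,y)(x)=y$ for \emph{all} pairs $(x,y)$ in a fixed compact $K$, not just nearby ones. Consequently the lift is given by one global formula $\widetilde{H}(z,t)=\Phi(z,t)\circ F(z,0)$ with no time subdivision whatsoever. Adopting these two devices would repair your argument; as written, the assembly does not go through.
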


The following proof of Proposition \ref{prop:ev_serre} is due to
Alexander Kupers \cite{MO452495}.
\begin{proof}
Since Serre fibrations can be recognized locally \cite[6.3.3]{DieckAT},
it suffices to prove the following: 
\begin{itemize}
\item [(A)]For any collection $\{U_{s}\}_{s\in S}$ of pairwise disjoint
open sets of $M$ such that each $U_{s}$ is homeomorphic to $\bb R^{n}$,
the map
\[
\Emb\pr{\bb R^{n}\times S,M}\times_{\Conf\pr{S,M}}\prod_{s\in S}U_{s}\to\prod_{s\in S}U_{s}
\]
is a Serre fibration. 
\end{itemize}
We begin with a preliminary assertion. Let $\Homeo\pr M\subset\Emb\pr{M,M}$
denote the subspace consisting of the self-homeomorphisms of $M$.
We prove the following:
\begin{itemize}
\item [(B)]Let $U\subset M$ be an open set homeomorphic to $\bb R^{n}$,
and let $K\subset U$ be a compact subset. There is a continuous map
\[
\chi:K\times K\to\Homeo\pr M
\]
with the following properties:

\begin{enumerate}[label=$\bullet$]

\item For each $\pr{x,y}\in K\times K$, the support of $\chi\pr{x,y}$
is compact and lies in $U$ and, and moreover $\chi\pr{x,y}\pr x=y$.

\item For each $x\in K$, we have $\chi\pr{x,x}=\id_{\bb R^{n}}$.

\end{enumerate}

\end{itemize}
To prove (B), it suffices to consider the case where $U=M=\bb R^{n}$
and $K=D^{n}$. Choose a compactly supported smooth function $\phi:\bb R\to\bb R$
such that $\phi\pr t=1$ if $\abs t\leq1$ and $\sup_{t\in\bb R}\abs{\phi'\pr t}<1/2$.
We define $\chi:D^{n}\times D^{n}\to\Homeo\pr{\bb R^{n}}$ by
\[
\chi\pr{x,y}\pr z=\pr{\pr{1-\phi\pr{z_{i}}}z_{i}+\phi\pr{z_{i}}\pr{z_{i}+y_{i}-x_{i}}}^{n}_{i=1}.
\]
Note that this is well-defined. To see this, it suffices to show that,
for any real number $r\in[-2,2]$, the function
\[
z\mapsto\pr{1-\phi\pr z}z+\phi\pr z\pr{z+r}=z+\phi\pr zr
\]
is a self-homeomorphism of $\bb R$. This is clear, as the right-hand
side is an increasing function in $z$ (because $\sup_{t\in\bb R}\abs{\phi'\pr t}<1/2$).
The function $\chi$ has the desired properties, so we have proved
(B).

Now we prove assertion (A). Let $k\geq0$ and consider a solid commutative
diagram 
\[\begin{tikzcd}
	{I^k\times \{0\}} & {\operatorname{Emb}(\mathbb{R}^n\times S,M)\times _{\operatorname{Conf}(S,M)}\prod_{s\in S}U_s} \\
	{I^k\times I} & {\prod_{s\in S}U_s}.
	\arrow["F", from=1-1, to=1-2]
	\arrow[from=1-2, to=2-2]
	\arrow[from=1-1, to=2-1]
	\arrow["G"', from=2-1, to=2-2]
	\arrow[dashed, from=2-1, to=1-2]
\end{tikzcd}\]We wish to find a map $I^{k}\times I\to\Emb\pr{\bb R^{n}\times S,M}\times_{\Conf\pr{S,M}}\prod_{s\in S}U_{s}$
rendering the diagram commutative. Using assertion (B), we can construct
a map
\[
\Phi:I^{k}\times I\to\Homeo\pr M
\]
such that $\Phi\pr{x,0}=\id_{M}$ and $\Phi\pr{x,t}\pr{G_{s}\pr{x,0}}=G_{s}\pr{x,t}$
for all $s\in S$ and $\pr{x,t}\in I^{k}\times I$; here $G_{s}$
denotes the $s$th component of $G$. The map
\[
I^{k}\times I\to\Emb\pr{\bb R^{n}\times S,M},\,\pr{x,t}\mapsto\Phi\pr{x,t}\circ F\pr{x,0}
\]
gives the desired filler.
\end{proof}

\begin{cor}
\label{cor:germ_fib}Let $n\geq0$. For any $n$-manifold $M$ and
any finite set $S$, the map 
\[
\Germ\pr{S,M}\to\Sing\Conf\pr{S,M}
\]
is a Kan fibration.
\end{cor}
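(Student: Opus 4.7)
The plan is to deduce this directly from Proposition \ref{prop:ev_serre}, using the fact that $\Germ(S,M)$ is by definition a filtered colimit indexed by $(\bb R_{>0})^{\op}$.

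First, I would note that for each $r>0$, the ball $B^{n}(r)$ is homeomorphic to $\bb R^{n}$, so choosing such homeomorphisms (compatibly up to no essential issue, since this is just a pointwise statement) identifies $\Emb(S \times B^{n}(r),M)$ with a space of the form handled by Proposition \ref{prop:ev_serre}. Hence the evaluation map
\[
\Emb(S \times B^{n}(r), M) \to \Conf(S,M)
\]
is a Serre fibration, and applying the singular complex functor $\Sing$ turns it into a Kan fibration $\Sing\Emb(S \times B^{n}(r),M) \to \Sing\Conf(S,M)$. These Kan fibrations are compatible with the restriction maps along $r' \leq r$, so we have a filtered diagram of Kan fibrations over the common base $\Sing\Conf(S,M)$.

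Next, I would invoke the general principle that a filtered colimit of Kan fibrations over a fixed base is again a Kan fibration. To see this in the case at hand, consider a lifting problem
\[\begin{tikzcd}
\Lambda^{k}_{i} \ar[r] \ar[d] & \Germ(S,M) \ar[d] \\
\Delta^{k} \ar[r] & \Sing\Conf(S,M).
\end{tikzcd}\]
The horn $\Lambda^{k}_{i}$ has only finitely many nondegenerate simplices, so its image in $\Germ(S,M) = \colim_{r} \Sing\Emb(S \times B^{n}(r),M)$ lifts through some $\Sing\Emb(S \times B^{n}(r),M)$ for small enough $r$. The Kan fibration property at that finite stage supplies a filler $\Delta^{k} \to \Sing\Emb(S \times B^{n}(r),M)$, and postcomposition with the canonical map into the colimit yields the desired lift.

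There is no substantive obstacle here: Proposition \ref{prop:ev_serre} does all the geometric work, and the remaining argument is a formal observation about filtered colimits of Kan fibrations commuting with the finite simplicial sets $\Lambda^{k}_{i}$. The only small point to be careful about is that the colimit defining $\Germ(S,M)$ is indexed by a filtered poset, which is what legitimizes the finite factorization step.
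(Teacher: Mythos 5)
Your proof is correct and takes essentially the same approach as the paper: invoke Proposition~\ref{prop:ev_serre} to get Serre fibrations at each finite stage, apply $\Sing$ to turn them into Kan fibrations, and then use that Kan fibrations are stable under filtered colimits (which the paper simply asserts, while you spell out the compactness argument over the fixed base $\Sing\Conf(S,M)$).
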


\begin{proof}
This follows from Proposition \ref{prop:ev_serre}, for Kan fibrations
are stable under filtered colimits.
\end{proof}

We can now prove Propositions \ref{prop:germ_group} and \ref{prop:germ_bundle}.
\begin{proof}
[Proof of Proposition \ref{prop:germ_group}]The claim is trivial
if $n=0$, so we will assume that $n\geq1$. Let $\sigma:\abs{\Delta^{k}}\times B^{n}\pr r\to\bb R^{n}$
be a representative of a $k$-simplex $[\sigma]\in\Germ_{0}\pr{\bb R^{n}}$.
We wish to construct an inverse of $[\sigma]$.

For each $x\in\abs{\Delta^{k}}$, let $\sigma_{x}=\sigma\pr{x,-}$
denote the restriction of $\sigma$. We will first show that there
is some $s>0$ which satisfies $\overline{B^{n}\pr s}\subset\sigma_{x}\pr{\overline{B^{n}\pr{r/2}}}$
for every $x\in\abs{\Delta^{k}}$. Since $\abs{\Delta^{k}}$ is compact,
it will suffice to prove the following:
\begin{itemize}
\item [($\ast$)]For each $x_{0}\in\abs{\Delta^{k}}$, there are neighborhoods
$U_{x_{0}}$ of $x_{0}$ and some number $s\pr{x_{0}}>0$ such that
$\overline{B^{n}\pr{s\pr{x_{0}}}}\subset\sigma_{x}\pr{\overline{B^{n}\pr{r/2}}}$
for every $x\in U_{x_{0}}$.
\end{itemize}
Assertion ($\ast$) follows from Lemma \ref{lem:cube_in_cube}. 

Now we construct the inverse of $[\sigma]$. Choose $s>0$ as in the
previous paragraph. The map
\[
F:\sigma^{-1}\pr{\overline{B^{n}\pr s}}\to\abs{\Delta^{k}}\times\overline{B^{n}\pr s},\,\pr{x,p}\mapsto\pr{x,\sigma_{x}\pr p}
\]
is a continuous bijection of compact Hausdorff spaces, so it is a
homeomorphism. Consider the composite
\[
\tau:\abs{\Delta^{k}}\times B^{n}\pr s\xrightarrow{F^{-1}}\sigma^{-1}\pr{\overline{B^{n}\pr s}}\xrightarrow{\opn{pr}}\bb R^{n}.
\]
We have $[\tau][\sigma]=1=[\sigma][\tau]$ in $\Germ_{0}\pr{\bb R^{n}}_{k}$,
so $[\tau]$ is the desired inverse of $[\sigma]$.
\end{proof}

\begin{proof}
[Proof of Proposition \ref{prop:germ_bundle}]Since the map $\Germ\pr{S,M}\to\Sing\pr{\Conf\pr{S,M}}$
is a pullback of the map $\Germ\pr M^{S}\to\Sing M^{S}$, it suffices
to consider the case where $S$ is a singleton. Let $k\geq0$. The
action of $\Germ_{0}\pr{\bb R^{n}}_{k}$ on $\Germ\pr M_{k}$ is clearly
free. It will therefore suffice to show that the induced map 
\[
\theta:\Germ\pr M_{k}/\Germ_{0}\pr{\bb R^{n}}_{k}\to\Sing\pr M_{k}
\]
is bijective. Surjectivity of $\theta$ is immediate from Corollary
\ref{cor:germ_fib}. To prove that $\theta$ is injective, we must
prove the following:
\begin{itemize}
\item [($\ast$)]Let $r>0$ and let $\sigma,\tau:\abs{\Delta^{k}}\times B^{n}\pr r\to M$
be continuous maps such that, for each $x\in\abs{\Delta^{k}}$, the
maps $\sigma_{x}=\sigma\pr{x,0}:B^{n}\pr r\to M$ and $\tau_{x}=\tau\pr{x,0}:B^{n}\pr r\to M$
are embeddings satisfying $\sigma_{x}\pr 0=\tau_{x}\pr 0$. Then there
are some number $r'>0$ and a map $g:\abs{\Delta^{k}}\times B^{n}\pr{r'}\to B^{n}\pr r$
such that
\[
\sigma\pr{x,g\pr{x,p}}=\tau\pr{x,p}
\]
for every $\pr{x,p}\in\abs{\Delta^{k}}\times B^{n}\pr{r'}$.
\end{itemize}
Using Lemma \ref{lem:cube_in_cube} and the compactness of $\abs{\Delta^{k}}$,
we can find some $0<r'<r$ which satisfies $\tau_{x}\pr{\overline{B^{n}\pr{r'}}}\subset\sigma_{x}\pr{B^{n}\pr{r/2}}$
for every $x\in\abs{\Delta^{k}}$. The map
\begin{align*}
F:\abs{\Delta^{k}}\times\overline{B^{n}\pr{r/2}} & \to\abs{\Delta^{k}}\times M\\
\pr{x,p} & \mapsto\pr{x,\sigma_{x}\pr p}
\end{align*}
is a continuous injection from a compact space to a Hausdorff space,
so it is a homeomorphism onto its image. We define $g:\abs{\Delta^{k}}\times B^{n}\pr{r'}\to B^{n}\pr r$
as the composite
\[
\abs{\Delta^{k}}\times B^{n}\pr{r'}\xrightarrow{\tau'}F\pr{\abs{\Delta^{k}}\times\overline{B^{n}\pr{r/2}}}\xrightarrow{F^{-1}}\abs{\Delta^{k}}\times\overline{B^{n}\pr{r/2}}\xrightarrow{\opn{pr}}B^{n}\pr r,
\]
where $\tau'\pr{x,p}=\pr{x,\tau_{x}\pr p}$. The map $g$ has the
desired properties.
\end{proof}

We conclude this subsection with another consequence of Corollary
\ref{cor:germ_fib}.
\begin{prop}
\label{prop:emb_conf_cart}Let $n\geq0$, let $S$ be a finite set,
let $M$ and $N$ be $n$-manifolds, and let $\phi:M\to N$ be an
embedding. The square 
\[\begin{tikzcd}
	{\operatorname{Emb}(\mathbb{R}^n\times S,M)} & {\operatorname{Emb}(\mathbb{R}^n\times S,N)} \\
	{\operatorname{Conf}(S,M)} & {\operatorname{Conf}(S,N),}
	\arrow[from=1-1, to=1-2]
	\arrow[from=1-1, to=2-1]
	\arrow[from=1-2, to=2-2]
	\arrow[from=2-1, to=2-2]
\end{tikzcd}\]determined by the evaluation at the origin, is homotopy cartesian
(with respect to the Quillen model structure for topological spaces).
\end{prop}

\begin{proof}
It will suffice to show that the square becomes homotopy cartesian
after applying the singular complex functor. Using Remark \ref{rem:5.4.1.8},
we are reduced to showing that the square 
\[\begin{tikzcd}
	{\operatorname{Germ}(S,M)} & {\operatorname{Germ}(S,N)} \\
	{\operatorname{Sing}\operatorname{Conf}(S,M)} & {\operatorname{Sing}\operatorname{Conf}(S,N)}
	\arrow[from=1-1, to=1-2]
	\arrow[from=1-1, to=2-1]
	\arrow[from=1-2, to=2-2]
	\arrow[from=2-1, to=2-2]
\end{tikzcd}\]is homotopy cartesian. By Corollary \ref{cor:germ_fib}, the vertical
arrows of this diagram are Kan fibrations. It will therefore suffice
to show that the square is strictly cartesian, which is clear.
\end{proof}

\begin{rem}
\label{rem:emb_conf_cart}Proposition \ref{prop:emb_conf_cart} remains
valid if $\phi$ is a smooth embedding of smooth manifolds of the
same dimension and $\Emb$ is replaced by $\Emb_{\sm}$. This is because
the smooth version of Proposition \ref{prop:ev_serre} is also true,
with the same proof.
\end{rem}

\subsection{Simplicial Complexes}

The term \textit{simplicial complex} can refer to several closely
related notions depending on context. Since definitions are not entirely
standardized, we briefly recall the conventions and basic properties
relevant to this paper.
\begin{defn}
\label{def:scomplex}A \textbf{simplicial complex} $K$ is a locally
finite collection of simplices in a Euclidean space, which is closed
under finite intersection and the operation of taking faces. A\textbf{
subcomplex} of $K$ is a subset $L$ of $K$ which is itself a simplicial
complex.

The union of the simplices in $K$ is called its \textbf{underlying
polyhedron} and is denoted by $\abs K$. (In the main body of the
paper, we will often blur the distinction between simplicial complexes
and their underlying polyhedra.) A \textbf{subdivision} of $K$ is
a simplicial complex $K'$ with the same underlying polyhedron as
$K'$, such that every simplex of $K'$ lies in $K$. In this situation,
we write $K'\lcone K$. 

A \textbf{triangulation} of a topological space $X$ is a homeomorphism
$t:\abs K\to X$ with $K$ a simplicial complex.
\end{defn}

\begin{example}
\label{exa:triangulation}Let $K$ be a simplicial complex, and let
$k\geq0$. There is a triangulation of $\SP^{k}\pr K=K^{k}/\Sigma_{k}$
containing $\blacktriangle_{k}\pr K$ as its subcomplex. To see this,
choose a well-ordering of the vertices of $K$. The products $K^{k}$
is a union of cells of the form $\sigma_{1}\times\cdots\times\sigma_{k}$,
where each $\sigma_{i}$ is a simplex of $M$. We subdivide each such
cell by the simplices spanned by the vertices of form $\{\pr{a^{\pr i}_{1},\dots,a^{\pr i}_{k}}\}_{0\leq i\leq d}$,
where $\pr{a^{\pr 0}_{1},\dots,a^{\pr 0}_{k}}<\cdots<\pr{a^{\pr d}_{1},\dots,a^{\pr d}_{k}}$
are vertices of $\sigma_{1}\times\cdots\times\sigma_{k}$. (The ordering
on the product is given by the categorical product of posets.) This
gives rise to a triangulation on $\SP^{k}\pr K$ with the desired
properties.
\end{example}

\begin{defn}
Let $K$ be a simplicial complex, and let $L\subset K$ be a subcomplex.
The \textbf{simplicial neighborhood} of $L$ in $K$, denoted by $N\pr{L,K}$,
is the subcomplex of $K$ generated by the simplices intersecting
$\abs L$ nontrivially. The \textbf{simplicial complement} of $L$
in $K$, denoted by $C\pr{L,K}$, is the subcomplex of $K$ consisting
of the simplices not intersecting $\abs L$. 

A subdivision $K'\lcone K$ is said to be obtained from $K$ by \textbf{deriving
it near $L$} if it is obtained by replacing the simplices of $K\setminus\pr{L\cup C\pr{L,K}}$
by the following procedure: For each $\sigma\in K\setminus\pr{L\cup C\pr{L,K}}$,
we choose a point $a_{\sigma}$ in the interior of $\sigma$. We then
adjoin $K\setminus\pr{L\cup C\pr{L,K}}$ to the faces simplices of
the form $\sigma\star a_{\tau_{1}}\star\cdots\star a_{\tau_{k}}$,
where $k\geq1$, $\sigma\in L\cup C\pr{L,K}$, and $\tau_{1}\subset\cdots\subset\tau_{k}$
are simplices in $K\setminus\pr{L\cup C\pr{L,K}}$ with $\sigma\subset\tau_{1}$.
(Here $\star$ denotes the join operation.) We call $N\pr{L,K'}$
a \textbf{derived neighborhood} of $L$ in $K$.
\end{defn}

\begin{rem}
Let $K$ be a simplicial complex, and let $L\subset K$ be a subcomplex.
Then $\abs{N\pr{L,K}}$ is a neighborhood of $\abs L$ in $\abs K$.
Moreover, $N\pr{L,K}$ is the smallest subcomplex of $K$ having this
property.
\end{rem}

\begin{rem}
\label{rem:scomplement}Let $K$ be a simplicial complex, and let
$L\subset K$ be a subcomplex. Then $\abs{C\pr{L,K}}$ is a deformation
retract of $\abs K\setminus\abs L$. Indeed, every point of $\abs K\setminus\pr{\abs L\cup\abs{C\pr{L,K}}}$
can be written uniquely as $tx+\pr{1-t}y$, where $x\in\abs L$, $y\in\abs{C\pr{L,K}}$,
and $t\in\pr{0,1}$, and an inverse homotopy equivalence is given
by mapping such a point to $y$.
\end{rem}

\begin{rem}
\label{rem:snbhd_compat}Simplicial neighborhoods are compatible with
the operation of taking intersections with subcomplexes, in the following
sense: Let $K$ be a simplicial complex, and let $K_{0},L\subset K$
be subcomplexes. Set $L_{0}=K_{0}\cap L$. We have 
\[
N\pr{K,L}\cap K_{0}=N\pr{K_{0},L_{0}}.
\]
\end{rem}

\begin{rem}
\label{rem:dnbhd_compat}Derived neighborhoods are compatible with
the operation of taking intersections with subcomplexes, in the following
sense: Let $K$ be a simplicial complex, and let $K_{0},L\subset K$
be subcomplexes. Suppose we are given a subdivision $K'\lcone K$
is obtained by deriving it near $L$, and let $K'_{0}\lcone K_{0}$
be the subdivision consisting of the simplices of $K'$ lying in $\abs{K_{0}}$.
Then $K'_{0}$ is obtained by deriving $K_{0}$ near $L_{0}$, and
$L_{0}=K'_{0}\cap L$. So by Remark \ref{rem:snbhd_compat}, we have
\[
N\pr{K',L}\cap K'_{0}=N\pr{K'_{0},L_{0}}.
\]
In particular, we have $\abs{N\pr{K',L}}\cap\abs{K_{0}}=\abs{N\pr{K'_{0},L_{0}}}$.
\end{rem}

\begin{lem}
\label{lem:small_reg_nbhd}Let $K$ be a simplicial complex, let $L\subset K$
be a subcomplex, and let $U\subset\abs K$ be a neighborhood of $\abs L$.
There is a derived neighborhood of $L$ in $K$ whose underlying polyhedron
lies in $U$.
\end{lem}

\begin{proof}
Define $d_{L}:\abs K\to[0,1]$ by mapping each vertex in $L$ to $0$
and the remaining vertices to $1$, and then extending linearly on
each simplex. Since simplices are compact, for each simplex $\sigma\in K\setminus\pr{L\cup C\pr{L,K}}$,
there is some $\varepsilon_{\sigma}\in\pr{0,1}$ such that $d^{-1}_{L}\pr{[0,\varepsilon_{\sigma}]}\cap\sigma\subset U$.
By choosing $\varepsilon_{\sigma}$ in the order of decreasing dimension,
we may assume that if $\sigma\subset\tau$, then $\varepsilon_{\sigma}\leq\varepsilon_{\tau}$.
We then choose a point $a_{\sigma}\in\sigma^{\circ}\cap d^{-1}_{L}\pr{\varepsilon_{\sigma}}$,
and form a derived subdivision $K'$ of $K$ by using the points $a_{\sigma}$.
We claim that $\abs{N\pr{L,K'}}\subset U$.

The simplicial complex $N\pr{L,K'}$ is generated by the simplices
of the form $\sigma\star a_{\sigma_{1}\star\tau_{1}}\star\cdots\star a_{\sigma_{k}\star\tau_{k}}$,
where $\sigma\subset\sigma_{1}\subset\cdots\subset\sigma_{k}$ are
simplices of $L$ and $\tau_{1}\subset\cdots\subset\tau_{k}$ are
simplices of $C\pr{K,L}$. Such a cell is contained in $d^{-1}_{L}\pr{[0,\varepsilon_{\sigma_{k}\star\tau_{k}}]}\cap\pr{\sigma_{k}\star\tau_{k}}$
and hence in $U$. Hence $\abs{N\pr{L,K'}}\subset U$, as required.
\end{proof}

\providecommand{\bysame}{\leavevmode\hbox to3em{\hrulefill}\thinspace}
\providecommand{\MR}{\relax\ifhmode\unskip\space\fi MR }
\providecommand{\MRhref}[2]{%
  \href{http://www.ams.org/mathscinet-getitem?mr=#1}{#2}
}
\providecommand{\href}[2]{#2}

\end{document}